  \newlength\squareheight
  \newcommand\squareslash{\tikz{\draw (0,0) rectangle (\squareheight,\squareheight);\draw(0,0) -- (\squareheight,\squareheight)}}
\newcommand{\xequal}[2][]{\ext@arrow 0055{\equalfill@}{#1}{#2}}
\def\equalfill@{\arrowfill@\Relbar\Relbar\Relbar}
\numberwithin{equation}{section}
\theoremstyle{definition}
\newtheorem{example}{Example}[section]
\newtheorem{definition}[example]{Definition}
\theoremstyle{plain}
\newtheorem{lemma}[example]{Lemma}
\newtheorem{theorem}[example]{Theorem}
\newtheorem{proposition}[example]{Proposition}
\newtheorem{corollary}[example]{Corollary}
\DeclareMathOperator{\add}{{\rm add}}
\DeclareMathOperator{\proj}{{\rm proj}}
\DeclareMathOperator{\Mod}{{\rm mod}}
\DeclareMathOperator{\Tr}{{\rm Tr}}
\DeclareMathOperator{\sH}{\mathsf{H}}
\DeclareMathOperator{\X}{\mathsf{X}}
\DeclareMathOperator{\e}{\mathsf{e}}
\DeclareMathOperator{\Hom}{{\rm Hom}}
\DeclareMathOperator{\End}{{\rm End}}
\DeclareMathOperator{\sttilt}{{\rm s \tau \mathchar`-tilt}}
\DeclareMathOperator{\trigid}{{\rm i \tau \mathchar`-rigid}}
\DeclareMathOperator{\ctilt}{{\rm c \mathchar`-tilt}}
\DeclareMathOperator{\rigid}{{\rm irigid}}
\DeclareMathOperator{\clv}{{\rm cl-var}}
\DeclareMathOperator{\clus}{{\rm cluster}}
\DeclareMathOperator{\x}{\bf{x}}
\DeclareMathOperator{\bA}{\mathbb{A}}
\DeclareMathOperator{\bR}{\mathbb{R}}
\DeclareMathOperator{\bT}{\mathbb{T}}
\DeclareMathOperator{\bZ}{\mathbb{Z}}
\DeclareMathOperator{\cA}{\mathcal{A}}
\DeclareMathOperator{\cC}{\mathcal{C}}
\DeclareMathOperator{\cF}{\mathcal{F}}
\DeclareMathOperator{\g}{\gamma}
\DeclareMathOperator{\de}{\delta}
\title{Density of $g$-vector cones from triangulated surfaces}
\author{toshiya yurikusa}
\address{T. Yurikusa: Graduate School of Mathematics, Nagoya University, Chikusa-ku, Nagoya, 464-8602 Japan}
\email{m15049q@math.nagoya-u.ac.jp}
\begin{document}

\begin{abstract}
We study $g$-vector cones associated with clusters of cluster algebras defined from a marked surface $(S,M)$ of rank $n$. We determine the closure of the union of $g$-vector cones associated with all clusters. It is equal to $\mathbb{R}^n$ except for a closed surface with exactly one puncture, in which case it is equal to the half space of a certain explicit hyperplane in $\mathbb{R}^n$. Our main ingredients are laminations on $(S,M)$, their shear coordinates and their asymptotic behavior under Dehn twists. As an application, if $(S,M)$ is not a closed surface with exactly one puncture, the exchange graph of cluster tilting objects in the corresponding cluster category is connected. If $(S,M)$ is a closed surface with exactly one puncture, it has precisely two connected components.
\end{abstract}

\keywords{cluster algebra, marked surface, lamination, shear coordinate, cluster category, $\tau$-tilting theory}
\maketitle

\section{Introduction}

 Cluster algebras, introduced by Fomin and Zelevinsky in $2002$ \cite{FZ02}, are commutative algebras with generators called cluster variables. The certain tuples of cluster variables are called clusters. Their original motivation was to study total positivity of semisimple Lie groups and canonical bases of quantum groups.  In recent years, it has interacted with various subjects in mathematics, for example, representation theory of quivers, Poisson geometry, integrable systems, and so on.

 Let $Q$ be a quiver without loops and $2$-cycles, and let $\cA(Q)$ be the associated cluster algebra with principal coefficients (see Subsection \ref{clalg}). We denote by $\clus Q$ the set of clusters in $\cA(Q)$. Each cluster variable $x$ in $\cA(Q)$ has a numerical invariant $g_Q(x)$, called the $g$-vector of $x$ \cite{FZ07}. For each $\x \in \clus Q$, one can define a cone
\[
 C_Q(\x) := \biggl\{\sum_{x \in \x} a_x g_Q(x) \mid a_x \in \bR_{\ge0}\biggr\}
\]
in $\bR^n$, called the $g$-vector cone of $\x$. Note that these cones and their faces form a fan \cite[Theorem 8.7]{Re14a}. We say that $Q$ is finite type if $\clus Q < \infty$. The following result is well-known.

\begin{theorem}\cite[Theorem 10.6]{Re14a}\label{fineq}
 If a quiver $Q$ is finite type, then we have
\[
 \bigcup_{\x \in \clus Q}C_Q(\x)=\bR^n.
\]
\end{theorem}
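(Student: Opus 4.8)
The plan is to show that the fan $\cF$ formed by the cones $C_Q(\x)$, $\x\in\clus Q$, together with their faces (a fan by \cite[Theorem 8.7]{Re14a}) is \emph{complete}, which is exactly the asserted equality; I would deduce completeness from the fact that $\cF$ has no ``boundary facet'', combined with a dimension count.

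First I would isolate three structural facts.
\emph{(i)} Since $Q$ is of finite type, $\clus Q$ is finite, so $\cF$ has only finitely many maximal cones.
\emph{(ii)} For each $\x\in\clus Q$ the $g$-vectors $\{g_Q(x)\mid x\in\x\}$ are linearly independent (a standard property of $g$-vectors, classical in finite type), so $C_Q(\x)$ is a full-dimensional simplicial cone.
\emph{(iii)} For each $\x\in\clus Q$ and each $x_k\in\x$, mutation at $x_k$ produces a cluster $\x'=(\x\setminus\{x_k\})\cup\{x_k'\}\ne\x$; since $g_Q$ is attached to cluster variables relative to the fixed initial seed, $g_Q(x)$ does not depend on whether $x$ is regarded as lying in $\x$ or in $\x'$, so the facet $F:=\sum_{x\in\x\cap\x'}\bR_{\ge0}\,g_Q(x)$ of $C_Q(\x)$ is also a facet of $C_Q(\x')$. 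Hence every facet of a maximal cone of $\cF$ is a facet of at least two maximal cones.

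Then I would run the following topological argument. Set $U:=\bigcup_{\x\in\clus Q}C_Q(\x)$; by \emph{(i)} it is closed, and by \emph{(ii)} it is full-dimensional. Suppose $U\ne\bR^n$; then $\partial U$ is nonempty and $(n-1)$-dimensional, being the boundary of a proper closed full-dimensional subset of $\bR^n$. Now $\partial U\subseteq\bigcup_{\x}\partial C_Q(\x)$ is contained in the finite union of the codimension-$\ge1$ faces of $\cF$, while the union of those faces of codimension $\ge2$ has dimension at most $n-2$; hence $\partial U$ meets $\mathrm{relint}(F)$ for some facet $F$ of some maximal cone $C_Q(\x)$. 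Fix $p\in\mathrm{relint}(F)\cap\partial U$. By \emph{(iii)}, $F$ is a facet of a second maximal cone $C_Q(\x')$, and since $\cF$ is a fan, $C_Q(\x)\cap C_Q(\x')$ is a common face, necessarily $F$; as both cones are full-dimensional, they lie in the two distinct closed half-spaces bounded by the hyperplane $H$ spanned by $F$. Because $p$ lies in the relative interior of the facet $F$ of each, a neighbourhood of $p$ inside $C_Q(\x)$ fills out the half-space on its side of $H$ near $p$, and likewise for $C_Q(\x')$ on the other side; so $C_Q(\x)\cup C_Q(\x')$ contains a full neighbourhood of $p$ in $\bR^n$, contradicting $p\in\partial U$. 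Therefore $U=\bR^n$.

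The soft part here is the topology; the real content sits in \emph{(ii)} and \emph{(iii)}, and I expect \emph{(iii)} to require the most care: one must know that a seed can be mutated in every direction, that this yields a genuinely different cluster, and --- the crucial point --- that the $g$-vectors of the cluster variables not involved in the mutation are literally unchanged, which is precisely what makes $F$ a common facet of $C_Q(\x)$ and $C_Q(\x')$. In the finite-type case one could instead quote the realization of $\cF$ as the normal fan of a generalized associahedron, which is complete by construction; but the argument above is preferable since it isolates exactly which inputs are needed.
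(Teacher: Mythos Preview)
The paper does not give its own proof of this statement: Theorem~\ref{fineq} is quoted verbatim from \cite[Theorem~10.6]{Re14a} and used only as motivation for the analogues that follow. So there is no in-paper argument to compare against.

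Your proposal is correct and is essentially the standard ``pseudo-manifold'' argument that a finite pure $n$-dimensional fan in which every codimension-one cone lies in two maximal cones must be complete. The three inputs you isolate are exactly the ones needed, and you flag the delicate one, \emph{(iii)}, appropriately: the key is that $g$-vectors are attached to cluster variables (not to seeds), so the $n-1$ shared variables of $\x$ and $\x'$ contribute identical rays. Two minor points worth tightening. First, in \emph{(iii)} you implicitly use $C_Q(\x)\neq C_Q(\x')$ when you conclude that their intersection is the proper face $F$; this needs the injectivity of $x\mapsto g_Q(x)$ on cluster variables, which is part of the fan statement you already cite. Second, the sentence ``$\partial U$ is $(n-1)$-dimensional'' is a little loose; the clean way to produce a point of $\partial U$ in the relative interior of a facet is to take a generic segment from $\operatorname{int}(U)$ to $\bR^n\setminus U$ that avoids the codimension-$\ge 2$ skeleton (a measure-zero set), and look at where it first exits $U$. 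With these tweaks the argument is complete, and your closing remark that one could alternatively invoke the generalized associahedron is also correct.
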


 In this paper, we study an analogue of Theorem \ref{fineq} for cluster algebras defined from marked surfaces that were developed in \cite{FG06,FG09,FST,FoT,GSV}.

 Let $(S,M)$ be a marked surface and $T$ a tagged triangulation of $(S,M)$ (see Subsection \ref{tri}). We denote by $|T|$ the number of tagged arcs of $T$. Fomin, Shapiro and Thurston \cite{FST} constructed a quiver $Q_T$ associated with $T$. In $\cA(Q_T)$, cluster variables correspond to tagged arcs, and clusters correspond to tagged triangulations (Theorem \ref{bijtc}). Our first aim is to give the following analogue of Theorem \ref{fineq}.

\begin{theorem}\label{gfan}
 If $(S,M)$ is not a closed surface with exactly one puncture, then we have
\[
 \overline{\bigcup_{\x \in \clus Q_T}C_{Q_T}(\x)}=\bR^{|T|},
\]
where $\overline{(-)}$ is the closure with respect to the natural topology on $\bR^{|T|}$. If $(S,M)$ is a closed surface with exactly one puncture, then we have
\[
 \overline{\bigcup_{\x \in \clus Q_T}C_{Q_T}(\x)} = \overline{\bigcup_{\x \in \clus Q_T^{\rm op}}C_{Q_T^{\rm op}}(\x)} = \Bigl\{(a_{\de})_{\de \in T} \in \bR^{|T|} \mid \sum_{\de \in T} a_{\de} \ge 0\Bigr\}.
\]
\end{theorem}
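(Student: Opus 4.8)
The plan is to identify $\bigcup_{\x\in\clus Q_T}C_{Q_T}(\x)$ with the set of shear coordinates of a natural family of measured laminations on $(S,M)$, and then to prove the required density by iterating Dehn twists. I would first dispose of the finite type case: by the classification of surfaces of finite type in \cite{FST}, if $\cA(Q_T)$ is of finite type then $(S,M)$ is a polygon or a once-punctured polygon, hence not a closed surface with exactly one puncture, and Theorem \ref{fineq} immediately gives $\bigcup_{\x\in\clus Q_T}C_{Q_T}(\x)=\bR^{|T|}$. So from now on I assume $\cA(Q_T)$ is of infinite type, so that $(S,M)$ carries infinitely many tagged triangulations.

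By \cite{FST} the clusters of $\cA(Q_T)$ correspond to the tagged triangulations $T'$ of $(S,M)$ and the cluster variables to the tagged arcs, and by the results of Fomin--Thurston the $g$-vector of the cluster variable $x_\gamma$ with respect to the initial seed is the vector $\mathbf b(T,L_\gamma)$ of shear coordinates, relative to $T$, of the elementary lamination $L_\gamma$ of $\gamma$ (up to the standard sign conventions). Shear coordinates are additive under weighted disjoint unions of laminations, and the tagged arcs of a fixed $T'$ are pairwise compatible, so $\{L_\gamma\}_{\gamma\in T'}$ is a measured lamination for every choice of nonnegative weights, and
\[
 C_{Q_T}(\mathbf{x}_{T'})=\Bigl\{\,\mathbf b(T,L)\ \Bigm|\ L=\textstyle\sum_{\gamma\in T'}a_\gamma L_\gamma,\ a_\gamma\in\bR_{\ge0}\,\Bigr\}.
\]
Hence $\bigcup_{\x\in\clus Q_T}C_{Q_T}(\x)$ equals the image under $\mathbf b(T,-)$ of the set $\mathcal{L}$ of measured laminations which are finite weighted unions of elementary laminations of pairwise compatible tagged arcs, i.e.\ the laminations with no closed and no irrational leaves. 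Since $\mathbf b(T,-)$ is a homeomorphism from the space of measured laminations on $(S,M)$ onto $\bR^{|T|}$ (resp.\ onto its image), the theorem is equivalent to computing the closure of $\mathcal{L}$.

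Suppose first that $(S,M)$ is not a once-punctured closed surface. Weighted multicurves are dense in the space of measured laminations, and a weighted multicurve without closed components can be realised as an element of $\mathcal{L}$, so it suffices to show that an arbitrary essential simple closed curve $c$ is a limit of elements of $\mathcal{L}$ (and then to handle its superposition with the remaining components). For this I would fix a tagged arc $\alpha$ crossing $c$ essentially and apply the powers $D_c^{\,k}$ of the Dehn twist along $c$: each $D_c^{\,k}(\alpha)$ is again a tagged arc, so $L_{D_c^{\,k}(\alpha)}\in\mathcal{L}$, and the key input — the asymptotic behaviour of shear coordinates under Dehn twists — gives $\tfrac1k\,\mathbf b\bigl(T,L_{D_c^{\,k}(\alpha)}\bigr)\to\mathbf b(T,c)$ as $k\to\infty$. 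Establishing this limit, and making it uniform enough to survive the superposition and the passage to arbitrary laminations, is the step I expect to be the main obstacle; the rest is bookkeeping with shear coordinates and with flips. One then obtains $\overline{\mathcal{L}}$ equal to the whole space of measured laminations, and therefore $\overline{\bigcup_{\x\in\clus Q_T}C_{Q_T}(\x)}=\bR^{|T|}$.

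Finally I would treat the once-punctured closed surface, where the single marked point is the puncture $p$. There are no self-folded triangles, and the compatibility condition on tagged arcs forces all tags at $p$ to agree in any tagged triangulation; since flips preserve this property, every arc occurring in a triangulation of $\clus Q_T$ spirals the same way at $p$. A local computation of shear coordinates near $p$ then shows that a measured lamination $L$ all of whose leaves spiral that way into $p$ — and, in the limit, any essential simple closed curve — satisfies $\sum_{\de\in T}b_{\de}(T,L)\ge0$, and that $\mathbf b(T,-)$ maps the set of such laminations onto the half-space $\{(a_{\de})_{\de\in T}\in\bR^{|T|}\mid\sum_{\de\in T}a_{\de}\ge0\}$. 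Hence $\bigcup_{\x\in\clus Q_T}C_{Q_T}(\x)$ is contained in this half-space, while rerunning the Dehn-twist density argument inside the set of laminations spiralling that way at $p$ — the twisting curves may be chosen to avoid $p$, so they preserve the spiralling direction — gives the reverse inclusion of closures, whence equality. Applying the identical analysis to $Q_T^{\mathrm{op}}$, whose initial tagged triangulation is $T$ with all tags at $p$ reversed, yields the same half-space in the coordinates indexed by the arcs of $T$, completing the proof.
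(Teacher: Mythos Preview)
Your overall strategy---translate $g$-vector cones into shear-coordinate cones of elementary laminations via Theorem~\ref{bijtc}, then use Dehn-twist asymptotics to approximate closed leaves, and finally compute $\sum_\de b_{\de,T}$ at the puncture for the once-punctured closed case---is exactly the paper's approach (Theorems~\ref{main} and~\ref{Dehn}, Proposition~\ref{sumshear}). Two points in the execution need repair.

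First, the claim ``a weighted multicurve without closed components can be realised as an element of $\mathcal L$'' is false. There exist non-closed laminates which are not of the form $\e(\de)$ for any tagged arc $\de$: a curve enclosing exactly one puncture whose two ends lie on a common boundary segment, or spiral the same way around a common puncture, is a laminate but not an elementary one (Proposition~\ref{excep}). The paper calls these \emph{exceptional} and proves (Lemma~\ref{vecsep}) that $b_T(\ell)=b_T(\ell_{\sf p})+b_T(\ell_{\sf q})$, where $(\e^{-1}(\ell_{\sf p}),\e^{-1}(\ell_{\sf q}))$ is a pair of conjugate tagged arcs; these are compatible as tagged arcs and extend to a tagged triangulation, so $b_T(\ell)$ does lie in some $C_T(\e(T'))$. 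Without this lemma your density argument misses all laminations with an exceptional component, and these do occur among the integer points of $\bZ^{|T|}$.

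Second, your handling of $Q_T^{\rm op}$ is incorrect: simultaneously reversing all tags at $p$ does not change the quiver, so ``$Q_T^{\rm op}$, whose initial tagged triangulation is $T$ with all tags at $p$ reversed'' is wrong---one has $Q_{\rho T}=Q_T$. The paper instead passes to the opposite-orientation surface $(S^{\ast},M^{\ast})$, where $Q_{T^{\ast}}=Q_T^{\rm op}$, and proves $b_T(\e(\de))=+g_{Q_T^{\rm op}}(x_{(\rho^{-1}\de)^{\ast}})$ for $\de\in\bA_{\rho T}$ (Theorem~\ref{Qop}); combined with Proposition~\ref{sumshear}(2) this gives the half-space for $Q_T^{\rm op}$. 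Relatedly, your signs are off in the $Q_T$ case as well: for plain-tagged arcs Proposition~\ref{sumshear}(2) gives $\sum_{\g\in T} b_{\g,T}(\e(\epsilon))=-1$, not $+1$, and it is the minus sign in $g_{Q_T}(x_\de)=-b_T(\e(\de))$ that converts the shear-coordinate half-space $\{\sum a_\de\le 0\}$ of Theorem~\ref{main} into the $g$-vector half-space $\{\sum a_\de\ge 0\}$ of Theorem~\ref{gfan}.
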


 The second aim of this paper is to apply Theorem \ref{gfan} to representation theory. We consider a non-degenerate potential $W$ of $Q_T$ such that the associated Jacobian algebra $J(Q_T,W)$ is finite dimensional \cite{DWZ}. Such a potential $W$ exists (Proposition \ref{existW}). The potential $W$ and its Jacobian algebra $J=J(Q_T,W)$ have been studied by a number of researchers (see e.g. \cite{ABCP,CL,GLS,V}). We focus on the associated cluster category in this paper.

 Using the Ginzburg differential graded algebra $\Gamma=\Gamma_{Q_T,W}$ associated with $(Q_T,W)$ \cite{G}, Amiot \cite{A} constructed a generalized cluster category $\cC=\cC_{Q_T,W}$ with cluster tilting object $\Gamma$. The $g$-vector of each rigid object (resp., $\tau$-rigid pair) in $\cC$ (resp., $\Mod J$) is a certain element in the Grothendieck group $K_0(\add \Gamma)$ (resp., $K_0(\proj J)$). The $g$-vectors of indecomposable direct summands of a cluster tilting object $X$ (resp., a $\tau$-tilting pair $(M,P)$) form a cone $C_{\Gamma}(X)$ in $K_0(\add \Gamma)\otimes_{\bZ}\bR$ (resp., $C_J(M,P)$ in $K_0(\proj J)\otimes_{\bZ}\bR$), called the $g$-vector cone of $X$ (resp., $(M,P)$). Note that these $g$-vector cones and their faces form a fan \cite{DIJ}. Such a fan plays an important role in the study of scattering diagrams and their wall-chamber structures (see e.g. \cite{B,BST,GHKK,GS,KS,Y18a}).

 We denote by $\ctilt\cC$ (resp., $\sttilt J$) the set of isomorphism classes of basic cluster tilting objects in $\cC$ (resp., $\tau$-tilting pairs in $\Mod J$). We also denote by $\ctilt^+\cC$ (resp., $\ctilt^-\cC$, $\sttilt^+ J$, $\sttilt^- J$) the subset of $\ctilt\cC$ (resp., $\ctilt\cC$, $\sttilt J$, $\sttilt J$) consisting of mutation equivalence classes containing $\Gamma$ (resp., $\Gamma[1]$, $(J,0)$, $(0,J)$). We set
\[
 \ctilt^{\pm}\cC := \ctilt^+\cC \cup \ctilt^-\cC\ \mbox{and}\ \sttilt^{\pm} J := \sttilt^+ J \cup \sttilt^- J.
\]
 The following analogues of Theorem \ref{gfan} hold.

\begin{theorem}\label{gtame}
 Let $T$ be a tagged triangulation of a marked surface $(S,M)$. For a non-degenerate potential $W$ of $Q_T$ such that $J=J(Q_T,W)$ is finite dimensional, let $\cC=\cC_{Q_T,W}$ and $\Gamma=\Gamma_{Q_T,W}$. Then we have the equalities
\[
 \overline{\bigcup_{U\in\ctilt^{\pm}\cC}C_{\Gamma}(U)}=K_0(\add \Gamma)\otimes_{\bZ}\bR\ \ \text{and}\ \ \overline{\bigcup_{(M,P)\in\sttilt^{\pm} J}C_J(M,P)}=K_0(\proj J)\otimes_{\bZ}\bR.
\]
\end{theorem}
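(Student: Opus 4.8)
The plan is to deduce Theorem \ref{gtame} from Theorem \ref{gfan} by translating the geometric statement on $g$-vector cones of clusters into the language of cluster categories and $\tau$-tilting theory via the standard dictionary. First I would fix the canonical $\bR$-linear isomorphisms $\bR^{|T|}\cong K_0(\proj J)\otimes_{\bZ}\bR$ and $\bR^{|T|}\cong K_0(\add\Gamma)\otimes_{\bZ}\bR$ that send the standard basis vector at a vertex $i$ of $Q_T$ to $[P_i]$ and to $[\Gamma_i]$ respectively, where $J=\bigoplus_i P_i$ and $\Gamma=\bigoplus_i\Gamma_i$ are the decompositions into indecomposables indexed by the vertices of $Q_T$. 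Under these identifications there are mutation-compatible bijections
\[
 \clus Q_T \xrightarrow{\ \sim\ } \sttilt^+ J \xrightarrow{\ \sim\ } \ctilt^+\cC
\]
(see \cite{DIJ} and the references therein; neither the bijections nor the $g$-vectors depend on the choice of admissible $W$) identifying $C_{Q_T}(\x)$ with $C_J(M,P)$ and with $C_\Gamma(U)$, so that
\[
 \bigcup_{U\in\ctilt^+\cC}C_\Gamma(U)\ =\ \bigcup_{(M,P)\in\sttilt^+ J}C_J(M,P)\ =\ \bigcup_{\x\in\clus Q_T}C_{Q_T}(\x).
\]

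If $(S,M)$ is not a closed surface with exactly one puncture, Theorem \ref{gfan} says the rightmost union is dense in $\bR^{|T|}$, and since $\ctilt^+\cC\subseteq\ctilt^{\pm}\cC$ and $\sttilt^+ J\subseteq\sttilt^{\pm}J$, both claimed equalities follow at once; the negative components play no role here. If $(S,M)$ is a closed surface with exactly one puncture, I would bring in the negative components. The shift functor $[1]$ on $\cC$, together with the standard duality $\sttilt J\cong\sttilt J^{\rm op}$ where $J^{\rm op}=J(Q_T^{\rm op},W^{\rm op})$, carries $\Gamma$ to $\Gamma[1]$ and $(J,0)$ to $(0,J^{\rm op})$, negates $g$-vectors, and restricts to bijections $\ctilt^-\cC\xrightarrow{\ \sim\ }\ctilt^+\cC_{Q_T^{\rm op},W^{\rm op}}$ and $\sttilt^- J\xrightarrow{\ \sim\ }\sttilt^+ J^{\rm op}$ intertwining $g$-vector cones with their negatives. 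Applying the first paragraph to $Q_T^{\rm op}$ then gives
\[
 \bigcup_{U\in\ctilt^-\cC}C_\Gamma(U)\ =\ -\bigcup_{\x\in\clus Q_T^{\rm op}}C_{Q_T^{\rm op}}(\x),
\]
and similarly for $\sttilt^- J$. By the second assertion of Theorem \ref{gfan}, the closure of $\bigcup_{\x\in\clus Q_T^{\rm op}}C_{Q_T^{\rm op}}(\x)$ is the half space $\{(a_{\de})\mid\sum_{\de} a_{\de}\ge0\}$, so the closure of the union over $\ctilt^-\cC$ is the opposite half space $\{(a_{\de})\mid\sum_{\de} a_{\de}\le0\}$; together with the $+$ part, whose closure is $\{(a_{\de})\mid\sum_{\de} a_{\de}\ge0\}$, this covers all of $\bR^{|T|}$, which proves both equalities.

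The routine but load-bearing point is the dictionary of the first paragraph: one must check that the three a priori distinct notions of $g$-vector --- of cluster variables of $\cA(Q_T)$ with principal coefficients, of support $\tau$-tilting pairs of $J$, and of rigid objects of $\cC$ --- agree under the chosen bases, and that the relevant maps are genuine bijections onto $\sttilt^+ J$ and $\ctilt^+\cC$ for every admissible $W$; these facts are available in the literature but must be assembled. The more delicate ingredient is the once-punctured closed surface case, specifically that passing to the negative component corresponds, up to the sign coming from the shift, to replacing $Q_T$ by $Q_T^{\rm op}$ --- equivalently that $\cC_{Q_T,W}$ and $\cC_{Q_T^{\rm op},W^{\rm op}}$ are exchanged by a mutation-compatible duality taking $\Gamma[1]$ to $\Gamma_{Q_T^{\rm op},W^{\rm op}}$, so that $\ctilt^+\cC$ and $\ctilt^-\cC$ really are distinct mutation classes; this is also what forces the exchange graph to disconnect in this case.
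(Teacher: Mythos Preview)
Your proposal is correct and follows essentially the same route as the paper: reduce both equalities to Theorem \ref{gfan} via the standard dictionary $\clus Q_T\leftrightarrow\ctilt^+\cC\leftrightarrow\sttilt^+ J$ (the paper's Theorems \ref{sfE} and \ref{CC}), and handle the negative component by passing to $Q_T^{\rm op}$. The only presentational difference is that the paper packages the identification $\ctilt^-\cC\leftrightarrow\clus Q_T^{\rm op}$ explicitly as Corollary \ref{CC'}, building it as the composite $\sH$, then the $\tau$-tilting duality $\varphi:\sttilt^- J\to\sttilt^+ J^{\rm op}$ of Theorem \ref{varphi}, then $\sH^{-1}$ for $\cC_{Q_T^{\rm op},W^{\rm op}}$, then $\X$; your invocation of ``the shift functor $[1]$ together with the standard duality'' gestures at the same chain but is less precise (the shift alone is an autoequivalence of $\cC$ and does not by itself land you in $\cC_{Q_T^{\rm op},W^{\rm op}}$), so when you write this up you should spell out that composite exactly as the paper does.
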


 This theorem means that $g$-vector cones are dense in the scattering diagram of $J$. It gives the following application.

\begin{corollary}\label{connect}
 Any basic cluster tilting object in $\cC$ (resp., $\tau$-tilting pair in $\Mod J$) is contained in $\ctilt^{\pm}\cC$ (resp., $\sttilt^{\pm} J$). In particular, if $(S,M)$ is not a closed surface with exactly one puncture, the exchange graph of $\ctilt\cC$ (resp., $\sttilt J$) is connected, thus $\ctilt\cC = \ctilt^+\cC = \ctilt^-\cC$ (resp., $\sttilt J = \sttilt^+ J  = \sttilt^- J$). Otherwise, it has precisely two connected components $\ctilt^+\cC$ and $\ctilt^-\cC$ (resp., $\sttilt^+ J$ and $\sttilt^- J$).
\end{corollary}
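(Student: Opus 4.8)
The plan is to deduce everything from one soft fact about fans: if a union of full-dimensional cones of a fan is dense in the ambient space, it already contains every full-dimensional cone of the fan. I would first record the form I need. By \cite{DIJ}, for each $(M,P)\in\sttilt J$ the cone $C_J(M,P)$ has dimension $|T|$, the map $(M,P)\mapsto C_J(M,P)$ is injective, and the $C_J(M,P)$ together with their faces form a fan; likewise for the $C_\Gamma(U)$, $U\in\ctilt\cC$. Hence, whenever $\mathcal{S}\subseteq\sttilt J$ is a union of connected components of the exchange graph (equivalently, a union of mutation classes) with $\overline{\bigcup_{(M,P)\in\mathcal{S}}C_J(M,P)}=K_0(\proj J)\otimes_{\bZ}\bR$, one must have $\mathcal{S}=\sttilt J$: if $(M',P')\notin\mathcal{S}$ then $\Int C_J(M',P')$ is a nonempty open set, and for any $(M,P)\in\mathcal{S}$ the intersection $C_J(M',P')\cap C_J(M,P)$ is a common face, so meeting the interior of $C_J(M',P')$ would force it to be all of $C_J(M',P')$, whence $C_J(M',P')\subseteq C_J(M,P)$ and, by equality of dimension and injectivity, $(M',P')=(M,P)$, a contradiction; thus $\Int C_J(M',P')$ lies in the complement of the dense set $\bigcup_{(M,P)\in\mathcal{S}}C_J(M,P)$, which is impossible. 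The same argument applies verbatim to $\ctilt\cC$.

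Applying this with $\mathcal{S}=\sttilt^{\pm}J$ (resp.\ $\ctilt^{\pm}\cC$) and the density in Theorem \ref{gtame} gives immediately $\sttilt J=\sttilt^{\pm}J=\sttilt^+J\cup\sttilt^-J$ (resp.\ $\ctilt\cC=\ctilt^{\pm}\cC=\ctilt^+\cC\cup\ctilt^-\cC$), which is the first assertion. Since $\sttilt^+J$ and $\sttilt^-J$ are each a single mutation class, hence a single connected component of the exchange graph, there are at most two components, and connectedness is equivalent to $\sttilt^+J=\sttilt^-J$ (and similarly for $\cC$). To settle this I would use the standard $g$-vector-preserving identification of $\sttilt^+J$ (resp.\ $\ctilt^+\cC$) with $\clus Q_T$, under which $\bigcup_{(M,P)\in\sttilt^+J}C_J(M,P)$ (resp.\ $\bigcup_{U\in\ctilt^+\cC}C_\Gamma(U)$) is identified with the $g$-vector fan $\bigcup_{\x\in\clus Q_T}C_{Q_T}(\x)$ of $\cA(Q_T)$ inside $K_0(\proj J)\otimes_{\bZ}\bR\cong\bR^{|T|}$.

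If $(S,M)$ is not a closed surface with exactly one puncture, Theorem \ref{gfan} says this fan is dense, so the observation above with $\mathcal{S}=\sttilt^+J$ (resp.\ $\ctilt^+\cC$) yields $\sttilt J=\sttilt^+J=\sttilt^-J$ (resp.\ $\ctilt\cC=\ctilt^+\cC=\ctilt^-\cC$), i.e.\ the exchange graph is connected. If $(S,M)$ is a closed surface with exactly one puncture, Theorem \ref{gfan} instead gives that every cone $C_J(M,P)$ with $(M,P)\in\sttilt^+J$ is contained in the closed half-space $\{(a_\de)_{\de\in T}\mid\sum_{\de\in T}a_\de\ge0\}$; but $C_J(0,J)$ is the negative orthant of $K_0(\proj J)\otimes_{\bZ}\bR$ (in the basis given by the indecomposable projectives $P_\de$, $\de\in T$), which contains points of negative coordinate sum, so $(0,J)\notin\sttilt^+J$ and hence $\sttilt^+J\neq\sttilt^-J$. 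Together with the first assertion this shows the two connected components are exactly $\sttilt^+J$ and $\sttilt^-J$; the $\ctilt\cC$ case is identical, with $C_\Gamma(\Gamma[1])$ the negative orthant.

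The only step requiring real care is the compatibility used in the second paragraph --- that the passage from clusters of $\cA(Q_T)$ to $\tau$-tilting pairs of $J$ and to cluster tilting objects of $\cC$ preserves $g$-vector cones exactly, so that Theorem \ref{gfan} transfers without loss. This is the usual categorification dictionary for non-degenerate potentials and I would cite it accordingly; granting it, the corollary is just the elementary fan observation together with Theorems \ref{gfan} and \ref{gtame}, the latter carrying all the surface-topological content.
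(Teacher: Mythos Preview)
Your argument is correct, and the first assertion (that $\ctilt\cC=\ctilt^{\pm}\cC$, resp.\ $\sttilt J=\sttilt^{\pm}J$) is proved exactly as in the paper: full-dimensionality of the cones (the paper cites \cite{DK} rather than \cite{DIJ} for this), pairwise boundary-only intersection \cite{DIJ}, and the density of Theorem~\ref{gtame} together force every cone to already appear among those coming from $\ctilt^{\pm}\cC$.

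For the connectedness dichotomy your route diverges from the paper's. The paper simply invokes Theorem~\ref{flip}: via the bijections of Theorems~\ref{bijtc} and~\ref{CC} (and Corollary~\ref{CC'}), the question of whether $\ctilt^+\cC=\ctilt^-\cC$ reduces to whether $T$ and $\rho T$ lie in the same flip component of $\bT$, which Theorem~\ref{flip} settles. You instead reuse the $g$-vector geometry: in the non--one-puncture case you apply the fan observation once more with $\mathcal{S}=\sttilt^+J$ and the density half of Theorem~\ref{gfan}; in the closed one-puncture case you use the half-space statement of Theorem~\ref{gfan} to see that $C_J(0,J)$ (the negative orthant) cannot lie in $\bigcup_{(M,P)\in\sttilt^+J}C_J(M,P)$. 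This is a genuine alternative: it stays entirely inside the cone picture and avoids transporting flip-graph connectivity through the categorification dictionary, at the cost of invoking Theorem~\ref{gfan} and the $g$-vector-preserving identification $\sttilt^+J\leftrightarrow\clus Q_T$ a second time. The paper's route is shorter once the bijections are in hand; yours is more self-contained and makes the role of the half-space explicit.
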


 Notice that it was known by Plamondon \cite{Pl13} and Ladkani \cite{Lad13} that if $(S,M)$ is a closed surface with exactly one puncture, then the exchange graph of $\ctilt\cC$ is not connected. Also, it was known by Qiu and Zhou \cite{QZ} that if $(S,M)$ has non-empty boundary, then the exchange graph of $\ctilt\cC$ is connected. Our proof is entirely different from theirs.

 To prove Theorem \ref{gfan}, our main ingredient is shear coordinates on $(S,M)$. To study coefficients in cluster algebras defined from $T$, Fomin and Thurston \cite{FoT} used a certain class of curves in $S$, called laminates, and finite multi-sets of pairwise non-intersecting laminates, called laminations (see also \cite{FG07,T}). To a laminate $\ell$ of $(S,M)$, they associated an integer vector $b_T(\ell) \in \bZ^{|T|}$ whose entries are shear coordinates of $\ell$ and defined $b_T(L) := \sum_{\ell \in L} b_T(\ell) \in \bZ^{|T|}$ for a lamination $L$ on $(S,M)$. They showed that the map $L \mapsto b_T(L)$ induces a bijection between the set of laminations on $(S,M)$ and $\bZ^{|T|}$. Using this bijection, some properties of cluster algebras were given (see e.g. \cite{MSW13,Re14b}). In this paper, we want to consider not only integer vectors but real vectors.

 For a multi-set $L$ of laminates of $(S,M)$, in the same way as $g$-vector cones, we can define a cone $C_T(L)$ in $\bR^{|T|}$, called the shear coordinate cone of $L$ with respect to $T$. Recall that there is a natural injective map $\e$ from the set of tagged arcs of $(S,M)$ to the set of laminates of $(S,M)$ (see Subsection \ref{lami}). We denote by $\bT$ the set of tagged triangulations of $(S,M)$. The following result plays an important role to prove Theorem \ref{gfan}.

\begin{theorem}\label{main}
 Let $T$ be a tagged triangulation of a marked surface $(S,M)$. Then we have
\[
 \overline{\bigcup_{T' \in \bT}C_T(\e(T'))}=\bR^{|T|}.
\]
 If $(S,M)$ is a closed surface with exactly one puncture $p$, then we have
\[
 \overline{\bigcup_{T' \in \bT^+}C_T(\e(T'))} = \overline{\bigcup_{T' \in \bT^-}(-C_T(\e(T')))} = \Bigl\{(a_{\de})_{\de \in T} \in \bR^{|T|} \mid \sum_{\de \in T} a_{\de} \le 0\Bigr\},
\]
where $\bT^+$ (resp., $\bT^-$) is the set of tagged triangulations of $(S,M)$ tagged at $p$ in the same (resp., different) way as $T$.
\end{theorem}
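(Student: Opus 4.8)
The plan is to reduce Theorem~\ref{main} to a statement about the asymptotic behaviour of shear coordinates under iterated Dehn twists. Fix the reference triangulation $T$. The key observation is that for a laminate $\ell$, the shear-coordinate vector $b_T(\ell)$ records how $\ell$ crosses the arcs of $T$, and applying a Dehn twist $t_\gamma$ along a simple closed curve $\gamma$ to $\ell$ changes $b_T(t_\gamma^k(\ell))$ in a controlled, eventually linear way in $k$. Concretely, I expect $\tfrac1k b_T(t_\gamma^k(\ell)) \to c\cdot b_T(\ell_\gamma)$ as $k\to\infty$ for some positive constant $c$, where $\ell_\gamma$ is the laminate carried by $\gamma$. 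So directions of shear-coordinate vectors of laminates accumulate onto the directions of shear coordinates of the simple-closed-curve laminates. The first step is therefore to set up this asymptotic lemma carefully: choose $\gamma$ to be a curve that can be completed to (part of) a triangulation, take any tagged triangulation $T'$ containing an arc crossing $\gamma$, and observe that $t_\gamma^k(T')$ is again a tagged triangulation with $\e(t_\gamma^k(T')) = t_\gamma^k(\e(T'))$ on the relevant components.

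Second, I would show that the cones $C_T(\e(T'))$, as $T'$ ranges over $\bT$, already have closure containing every ray spanned by $b_T(\ell)$ for $\ell$ a laminate that extends to a tagged triangulation — this is essentially immediate since $\e(T')$ for $T'\ni$ (the arc giving $\ell$) contributes $b_T(\ell)$ as one of its generators. Combined with the Dehn-twist asymptotics, the closure then contains all rays $\bR_{\ge0}\,b_T(\ell_\gamma)$ for simple closed curves $\gamma$. The crux is then a density statement purely about shear coordinates: the set of vectors $\{b_T(\ell)\}$ as $\ell$ runs over \emph{all} laminates of $(S,M)$ is exactly $\bZ^{|T|}$ (Fomin--Thurston), hence $\bZ^{|T|}$-many rays; but we need the \emph{cone} generated by the triangulation-compatible ones, together with the limit rays, to be all of $\bR^{|T|}$ (or the stated half-space). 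I would argue that every integer vector $v\in\bZ^{|T|}$ is $b_T(L)$ for some lamination $L$, that $L$ decomposes into laminates each of which (after possibly a Dehn twist) lies in a common tagged triangulation with $T'$, and then $v = b_T(L)$ lies in $\overline{\bigcup C_T(\e(T'))}$ by additivity of $b_T$ over the multiset together with the limiting argument handling the closed-curve components. Since $\bZ^{|T|}$ spans $\bR^{|T|}$ as a cone only if we also get sign-opposite rays, I must check that shear coordinates of laminates realise both signs in every coordinate — which they do, again by the Fomin--Thurston bijection with $\bZ^{|T|}$.

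The exceptional case, a closed surface with one puncture $p$, is where the sign obstruction genuinely appears: tagged arcs are then all notched or all plain at $p$ within a single triangulation, the two families $\bT^+$ and $\bT^-$ are disjoint, and the shear coordinates of laminates coming from $\bT^+$ all satisfy $\sum_{\de\in T} a_\de \le 0$ (and those from $\bT^-$, after negation, the same). I would establish the inequality $\sum_{\de\in T} a_\de\le 0$ by a direct computation of $\sum_\de b_T(\ell)_\de$ for a laminate $\ell$ compatible with a tagged triangulation in $\bT^+$: each laminate of this type, being a curve that can be isotoped into the punctured surface avoiding a neighbourhood of $p$ appropriately, crosses the "plain" arcs of $T$ with a net nonpositive signed count — this is a local check at the unique puncture using the definition of shear coordinates at a self-folded triangle or a notched endpoint. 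Then the reverse containment (that the whole half-space is attained in the closure) follows from the same Dehn-twist density argument run inside the half-space, using that closed curves on the once-punctured closed surface have shear coordinates summing to exactly $0$, so their rays fill the bounding hyperplane and, together with one ray strictly inside, generate the closed half-space.

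The main obstacle I anticipate is the Dehn-twist asymptotic lemma: proving that $\tfrac1k b_T(t_\gamma^k(\ell))$ converges, and identifying the limit with a positive multiple of $b_T(\ell_\gamma)$, requires a careful bookkeeping of how intersection numbers and the $\pm$ contributions to shear coordinates evolve under the twist, and in particular ruling out cancellation that would make the limit degenerate or zero. A secondary difficulty is ensuring that \emph{every} laminate (not just "generic" ones) can be placed in a common tagged triangulation with some $T'\in\bT$ after finitely many twists — this needs the classification of laminates and the fact that any simple curve/arc without self-intersections embeds into a triangulation, which should follow from standard surface-topology arguments but must be stated for the tagged setting including the subtle puncture cases.
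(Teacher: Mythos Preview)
Your overall strategy matches the paper's: reduce to showing every integer vector $b_T(L)$ lies in the closure via the Fomin--Thurston bijection, decompose $L$ into closed and non-closed parts, realize the non-closed part as (the image under $\e$ of) a partial tagged triangulation, extend to a full triangulation $T_L$, and use Dehn-twist asymptotics along the closed laminates to make the cones $C_T(\e\,\mathsf{T}^m(T_L))$ accumulate onto $b_T(L_{\rm cl})$. The asymptotic lemma you anticipate is exactly the paper's Theorem~\ref{Dehn}, and your treatment of the once-punctured case via the coordinate-sum $\sum_{\de\in T} b_{\de,T}(\e(\epsilon))$ is also what the paper does (Proposition~\ref{sumshear}).

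There is, however, a genuine gap. You implicitly assume that every non-closed laminate is $\e(\de)$ for some tagged arc $\de$, so that the non-closed part of $L$ automatically corresponds to a partial tagged triangulation. This is false: there are \emph{exceptional} laminates---curves enclosing exactly one puncture whose two ends lie on a common boundary segment or spiral around a common puncture in the same direction (Proposition~\ref{excep})---which are not in the image of $\e$, and no amount of Dehn twisting changes this. The paper's fix is Lemma~\ref{vecsep}: for such $\ell$ one has $b_T(\ell)=b_T(\ell_{\sf p})+b_T(\ell_{\sf q})$, where $\ell_{\sf p},\ell_{\sf q}$ are elementary and $(\e^{-1}(\ell_{\sf p}),\e^{-1}(\ell_{\sf q}))$ is a pair of conjugate arcs, which \emph{can} sit together in a tagged triangulation. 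This replacement $L\mapsto L_{\sf pq}$ is what makes Proposition~\ref{tagpar}(2) go through.

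A smaller point: your phrasing ``every laminate \ldots\ can be placed in a common tagged triangulation after finitely many twists'' is misleading. Closed laminates are never arcs, and Dehn twists do not convert exceptional laminates into elementary ones. The twists are applied to the \emph{triangulation} $T_L$ (more precisely, only the extension arcs $U$ move, since the arcs coming from $L_{\rm nc}$ are disjoint from the closed curves), and the point is that $\tfrac{1}{m}b_T(\e\,\mathsf{T}^m(U))\to \text{const}\cdot b_T(L_{\rm cl})$, which puts $b_T(L_{\rm cl})$ in the closure of $\bigcup_m C_T(\e\,\mathsf{T}^m(U))$ and hence $b_T(L)=b_T(L_{\rm nc})+b_T(L_{\rm cl})$ in the closure of $\bigcup_m C_T(\e\,\mathsf{T}^m(T_L))$.
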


 It will be interesting to understand connections between our results and known results on Teichm\"{u}ller spaces such as \cite{FG11,Ro12,Ro13}.

 This paper is organized as follows. In Section \ref{pfmain}, we recall the notions of marked surfaces, laminations, and their shear coordinates. We study shear coordinates of laminates and their asymptotic behavior under Dehn twists, and prove Theorem \ref{main}. In Section \ref{pfgfan}, we recall cluster algebras defined from triangulated surfaces. We show that the shear coordinate of a laminate with respect to $T$ correspond with the $g$-vector of a cluster variable in $\cA(Q)$ or $\cA(Q^{\rm op})$. Consequently, Theorem \ref{gfan} follows from Theorem \ref{main}. In Section \ref{Rep}, we recall $\tau$-tilting theory, cluster tilting theory, and the relationships between them and cluster algebras. Finally, we prove Theorem \ref{gtame} and Corollary \ref{connect}.

\medskip\noindent{\bf Acknowledgements}.
 The author would like to thank his supervisor Osamu Iyama for his guidance and helpful comments. He also thanks Daniel Labardini-Fragoso for valuable comments and the referees for fruitful suggestions. He is a Research Fellow of Society for the Promotion of Science (JSPS). This work was supported by JSPS KAKENHI Grant Number JP17J04270.

\section{Density of shear coordinate cones from triangulated surfaces}\label{pfmain}

\subsection{Marked surfaces and tagged triangulations}\label{tri}

 We start with recalling the notions of \cite{FST}. Let $S$ be a connected compact oriented Riemann surface with (possibly empty) boundary $\partial S$ and $M$ a non-empty finite set of marked points on $S$ with at least one marked point on each boundary component. We call the pair $(S,M)$ a {\it marked surface}. Any marked point in the interior of $S$ is called a {\it puncture}. For technical reasons, we assume that $(S,M)$ is neither a monogon with at most one puncture, a digon without punctures, a triangle without punctures, nor a sphere with at most three punctures.

 An {\it arc} $\g$ of $(S,M)$ is a curve in $S$ with endpoints in $M$, considered up to isotopy, such that the following conditions are satisfied:
\begin{itemize}
 \item $\g$ does not intersect itself except at its endpoints;
 \item $\g$ is disjoint from $M$ and $\partial S$ except at its endpoints;
 \item $\g$ does not cut out an unpunctured monogon or an unpunctured digon.
\end{itemize}
 An arc with two identical endpoints is called a {\it loop}. 
 Two arcs are called {\it compatible} if they don't intersect in the interior of $S$. When we consider intersections of curves $\g$ and $\de$, we assume that $\g$ and $\de$ intersect transversally in a minimum number of points. We denote by $\g \cap \de$ the set of their intersection points. An {\it ideal triangulation} is a maximal collection of distinct pairwise compatible arcs.
 A triangle with only two distinct sides is called {\it self-folded} (see Figure \ref{self-folded}).
\begin{figure}[htp]
\begin{minipage}{0.5\textwidth}
\centering
\begin{tikzpicture}
 \coordinate (0) at (0,0);
 \coordinate (1) at (0,-1);
 \draw (0) node[left]{$p$} to node[right,pos=0.3]{$\g'$} (1) node[below]{$o$};
 \draw (0,0.4) node[above]{$\g$} ..controls(0.5,0.4)and(1,-0.2).. (1);
 \draw (0,0.4) ..controls(-0.5,0.4)and(-1,-0.2).. (1);
 \fill(0) circle (0.7mm); \fill (1) circle (0.7mm);
\end{tikzpicture}
\hspace{10mm}
\begin{tikzpicture}[baseline=-15mm]
 \coordinate (0) at (0,0);
 \coordinate (1) at (0,-1);
 \draw (0) node[left]{$p$} to node[left]{$\iota(\g')$} (1) node[below]{$o$};
 \draw (0) to [out=80,in=100,relative] node[pos=0.2]{\rotatebox{40}{\footnotesize $\bowtie$}} node[right]{$\iota(\g)$} (1);
 \fill(0) circle (0.7mm); \fill (1) circle (0.7mm);
\end{tikzpicture}
   \caption{A self-folded triangle and the corresponding tagged arcs}
   \label{self-folded}
\end{minipage}
\begin{minipage}{0.48\textwidth}\vspace{5mm}
\centering
\begin{tikzpicture}
 \coordinate (0) at (0,0);
 \coordinate (1) at (0,-1.2);
 \draw (0) to node[left]{$\de$} (1);
 \draw (0) to [out=80,in=100,relative] node[pos=0.2]{\rotatebox{40}{\footnotesize $\bowtie$}} node[right]{$\varepsilon$} (1);
 \fill(0) circle (0.7mm); \fill (1) circle (0.7mm);
\end{tikzpicture}
\hspace{10mm}
\begin{tikzpicture}
 \coordinate (0) at (0,0);
 \coordinate (1) at (0,-1.2);
 \draw (0) to node[left]{$\de$} node[pos=0.8]{\rotatebox{0}{\footnotesize $\bowtie$}} (1);
 \draw (0) to [out=80,in=100,relative] node[pos=0.2]{\rotatebox{40}{\footnotesize $\bowtie$}} node[pos=0.75]{\rotatebox{-40}{\footnotesize $\bowtie$}} node[right]{$\varepsilon$} (1);
 \fill(0) circle (0.7mm); \fill (1) circle (0.7mm);
\end{tikzpicture}
   \caption{Pairs of conjugate arcs $(\de,\varepsilon)$}
   \label{pair}
\end{minipage}
\end{figure}
 For an ideal triangulation $T$, a {\it flip} at an arc $\g \in T$ replaces $\g$ with another arc $\g' \notin T$ such that $(T\setminus\{\g\})\cup\{\g'\}$ is an ideal triangulation. Notice that an arc inside a self-folded triangle can not be flipped. To make flip always possible, the notion of tagged arcs was introduced in \cite{FST}.

 A {\it tagged arc} $\de$ of $(S,M)$ is an arc whose each end is tagged in one of two ways, {\it plain} or {\it notched}, such that the following conditions are satisfied:
\begin{itemize}
 \item $\de$ does not cut out a monogon with exactly one puncture;
 \item If an endpoint of $\de$ lie on $\partial S$, then it is tagged plain;
 \item If $\de$ is a loop, then the both ends are tagged in the same way.
\end{itemize}
 In the figures, we represent tagged arcs as follows:
\[
\begin{tikzpicture}
 \coordinate (0) at (0,0) node[left]{plain};
 \coordinate (1) at (1,0);   \fill (1) circle (0.7mm);
 \draw (0) to (1);
\end{tikzpicture}
\hspace{7mm}
\begin{tikzpicture}
 \coordinate (0) at (0,0) node[left]{notched};
 \coordinate (1) at (1,0);   \fill (1) circle (0.7mm);
 \draw (0) to node[pos=0.8]{\rotatebox{90}{\footnotesize $\bowtie$}} (1);
\end{tikzpicture}
\]

 For an arc $\g$ of $(S,M)$, we define a tagged arc $\iota(\g)$ as follows:
\begin{itemize}
 \item If $\g$ does not cut out a monogon with exactly one puncture, then $\iota(\g)$ is the tagged arc obtained from $\g$ by tagging both ends plain;
 \item If $\g$ is a loop at $o \in M$ cutting out a monogon with exactly one puncture $p$, then there is a unique arc $\g'$ that connects $o$ and $p$ and does not intersect $\g$. And then $\iota(\g)$ is the tagged arc obtained by tagging $\g'$ plain at $o$ and notched at $p$ (see Figure \ref{self-folded}).
\end{itemize}
 A {\it pair of conjugate arcs} is, for a self-folded triangle $\{\g,\g'\}$, $(\iota(\g),\iota(\g'))$ or a pair obtained from $(\iota(\g),\iota(\g'))$ by simultaneous changing tags at each endpoint (see Figure \ref{pair}).

 For a tagged arc $\de$, we denote by $\de^{\circ}$ the arc obtained from $\de$ by forgetting its tags. Two tagged arcs $\de$ and $\epsilon$ are called {\it compatible} if the following conditions are satisfied:
\begin{itemize}
 \item The arcs $\de^{\circ}$ and $\epsilon^{\circ}$ are compatible;
 \item If $\de^{\circ}=\epsilon^{\circ}$, then at least one end of $\epsilon$ is tagged in the same way as the corresponding end of $\de$;
 \item If $\de^{\circ}\neq\epsilon^{\circ}$ and they have a common endpoint $o$, then the ends of $\de$ and $\epsilon$ at $o$ are tagged in the same way.
\end{itemize}
 A {\it partial tagged triangulation} is a collection of distinct pairwise compatible tagged arcs. If a partial tagged triangulation is maximal, then it is called a {\it tagged triangulation}. Recall that we denote by $\bT$ the set of tagged triangulations of $(S,M)$. We can define {\it flips} of tagged triangulations in the same way as ones of ideal triangulations. In particular, any tagged arc can be flipped.

\begin{theorem} \cite[Theorem 7.9, Proposition 7.10]{FST}\label{flip}
 If $(S,M)$ is not a closed surface with exactly one puncture, the exchange graph of $\bT$ is connected, that is, any two tagged triangulations of $(S,M)$ are connected by a sequence of flips. Otherwise, it has exactly two isomorphic components: one in which all ends of tagged arcs are plain and one in which they are notched.
\end{theorem}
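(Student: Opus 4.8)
The plan is to follow the line of argument of Fomin--Shapiro--Thurston: relate tagged triangulations to ordinary ideal triangulations, invoke the classical connectivity of the flip graph of ideal triangulations (equivalently, contractibility of the arc complex of a punctured surface, in the spirit of Harer, Hatcher and Penner), and then track carefully how the tagging data at the punctures changes under flips.

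First I would set up the bookkeeping map $\iota$ at the level of triangulations: for an ideal triangulation $T_0$, the set $\{\iota(\g)\mid \g\in T_0\}$ is a tagged triangulation whose only notched ends are those forced by self-folded triangles, and conversely a tagged triangulation in which each puncture carries at most the notch coming from a single conjugate pair is exactly of this form. This isolates the genuinely new data carried by a tagged triangulation as its \emph{notching signature} at the punctures.

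The core of the proof is then two reductions. \textbf{(a)} Any tagged triangulation $T$ can be connected by flips to one coming from an ideal triangulation: near a puncture $p$ carrying several notched incident ends, $T$ looks like a fan of arcs notched at $p$, and provided $(S,M)$ is not a closed surface with exactly one puncture there is always another marked point or puncture available, so one may flip these arcs one at a time, in the appropriate cyclic order, strictly decreasing the number of notched ends at $p$ until only the self-folded pattern remains; doing this puncture by puncture reaches some $\{\iota(\g)\mid \g\in T_0\}$. \textbf{(b)} For two ideal triangulations $T_0,T_1$, one connects the associated tagged triangulations by flips: here I would invoke the classical fact that the flip graph of ideal triangulations, enlarged by the moves that become legal once the arc inside a self-folded triangle may be flipped, is connected, and then check that each ordinary flip (replacing $\g$ by $\g'$) lifts under $\iota$ to a genuine flip of tagged triangulations — immediate away from self-folded triangles, and precisely the situation tagged arcs were introduced to handle otherwise. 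Reductions (a) and (b) together give connectedness of the exchange graph of $\bT$ whenever $(S,M)$ is not a closed surface with exactly one puncture.

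Finally I would treat the exceptional case directly. If $(S,M)$ is a closed surface with exactly one puncture $p$, then no arc can cut out a monogon containing a puncture, so no ideal triangulation has a self-folded triangle and the tagged triangulation associated with it has all ends plain; moreover any two tagged arcs that are loops at $p$ with opposite tags are incompatible, so each tagged triangulation is either entirely plain or entirely notched. A flip of an all-plain tagged triangulation again produces an all-plain one (no self-folded triangle can appear), so the all-plain tagged triangulations form one connected component, namely the image of the set of ideal triangulations; the involution reversing every tag at $p$ is an isomorphism of the exchange graph carrying this component onto the all-notched one, and the two together exhaust $\bT$, giving exactly two isomorphic components. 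I expect the genuine obstacle to be reduction (a) — proving that notched ends can always be flipped away outside the exceptional surface — together with the bookkeeping in (b) around self-folded triangles; this is exactly where the standing hypotheses on $(S,M)$ (the excluded small cases, and "not a closed surface with exactly one puncture") are used.
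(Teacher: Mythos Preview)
The paper does not prove this statement at all: Theorem~\ref{flip} is stated with a citation to \cite[Theorem 7.9, Proposition 7.10]{FST} and is used as a black box throughout. So there is no proof in the paper to compare your proposal against.

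Your sketch is a reasonable outline of the Fomin--Shapiro--Thurston argument, with the correct two-step reduction (normalize the tagging, then invoke connectivity of the ideal flip graph) and the correct handling of the closed once-punctured case. If you intend to actually carry it out rather than cite it, the place requiring real care is your reduction (a): you assert that flipping arcs around a puncture $p$ ``strictly decreases the number of notched ends at $p$,'' but a single flip can create a new notched end elsewhere (when the flip produces a conjugate pair), so the decreasing quantity has to be chosen and tracked more carefully than just ``number of notched ends at $p$.'' FST handle this via the structure of the tagged arc complex rather than a naive count.
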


\subsection{Laminations on marked surfaces}\label{lami}

 We recall the notions of \cite{FoT}. A {\it laminate} of $(S,M)$ is a non-self-intersecting curve in $S$, considered up to isotopy relative to $M$, which is either
\begin{itemize}
 \item a closed curve, or
 \item a curve whose ends are unmarked points on $\partial S$ or spirals around punctures (either clockwise or counterclockwise),
\end{itemize}
 and the following curves are not allowed (see Figure \ref{fignonlam}):
\begin{itemize}
 \item a curve cutting out a disk with at most one puncture;
 \item a curve with two endpoints on $\partial S$ such that it is isotopic to a piece of $\partial S$ containing at most one marked point;
 \item a curve whose both ends are spirals around a common puncture in the same direction such that it does not enclose anything else.
\end{itemize}

\begin{figure}[htp]
\begin{minipage}{0.48\textwidth}
\centering
\begin{tikzpicture}[baseline=0mm]
 \coordinate (0) at (0,0);
 \coordinate (l) at (-1,0.6);
 \coordinate (r) at (1,0.6);
 \draw (2,2)--(-2,2)--(-2,-2)--(2,-2)--(2,2); \draw[pattern=north east lines] (0,-0.8) circle (4mm);
 \draw[blue] (-2,1) arc (-90:0:10mm); \draw[blue] (l) circle (4mm); \draw[blue] (-1.2,-0.7) circle (5mm);
 \draw[blue] (1,1.1) arc (90:-90:4mm); \draw[blue] (1,0.3) arc (-90:-280:2.4mm);
 \draw[blue] (0.6,0.6) .. controls (0.6,1) and (1.2,1) .. (1.2,0.6); \draw[blue] (1.2,0.6) arc (0:-120:1.8mm);
 \draw[blue] (0.6,0.6) arc (-0:-270:5mm); \draw[blue] (0.1,1.1) .. controls (0.4,1.13) and (0.8,1.13) .. (1,1.1);
 \draw[blue] (2,0) arc (90:270:7mm);
 \fill (l) circle (0.7mm); \fill (r) circle (0.7mm); \fill (2,2) circle (0.7mm); \fill (-2,2) circle (0.7mm); \fill (-2,-2) circle (0.7mm); \fill (2,-2) circle (0.7mm); \fill (0,-0.4) circle (0.7mm);
\end{tikzpicture}
   \caption{Curves which are not laminates}
   \label{fignonlam}
\end{minipage}
\begin{minipage}{0.48\textwidth}
\[
\begin{tikzpicture}[baseline=0mm]
 \coordinate (0) at (0,0);
 \coordinate (l) at (-1,0.6);
 \coordinate (r) at (1,0.6);
 \draw (2,2)--(-2,2)--(-2,-2)--(2,-2)--(2,2); \draw[pattern=north east lines] (0,-0.8) circle (4mm);
 \draw[blue] (-2,1.5)--(2,1.5); \draw[blue] (0,-0.8) circle (7mm); \draw[blue] (0,-0.8) circle (5.5mm);
 \draw[blue] (-1,1) .. controls (-0.3,0.95) and (0.3,0.95) .. (1,1);
 \draw[blue] (-1,1) arc (90:270:3.5mm); \draw[blue] (-1,0.3) arc (-90:100:2.4mm);
 \draw[blue] (1,1) arc (90:-90:3.5mm); \draw[blue] (1,0.3) arc (-90:-280:2.4mm);
 \draw[blue] (2,0) .. controls (0.7,-0.2) and (0.6,0.3) .. (0.6,0.6);
 \draw[blue] (0.6,0.6) .. controls (0.6,1) and (1.2,1) .. (1.2,0.6);
 \draw[blue] (1.2,0.6) arc (0:-120:1.8mm);
 \draw[blue] (-1,-2) .. controls (-1,0) and (-0.5,0.2) .. (0,0.2);
 \draw[blue] (1,-2) .. controls (1,0) and (0.5,0.2) .. (0,0.2);
 \fill (l) circle (0.7mm); \fill (r) circle (0.7mm); \fill (2,2) circle (0.7mm); \fill (-2,2) circle (0.7mm); \fill (-2,-2) circle (0.7mm); \fill (2,-2) circle (0.7mm); \fill (0,-0.4) circle (0.7mm);
\end{tikzpicture}
\]
   \caption{A lamination on an annulus with $2$ punctures}
   \label{figlam}
\end{minipage}
\end{figure}

\begin{definition} 
 We say that two laminates of $(S,M)$ are {\it compatible} if they don't intersect. A finite multi-set of pairwise compatible laminates of $(S,M)$ is called a {\it lamination} on $(S,M)$ (see Figure \ref{figlam}).
\end{definition}

 Let $\ell$ be a laminate of $(S,M)$. For an ideal/tagged triangulation $T$ of $(S,M)$, we define the {\it shear coordinate $b_{\g,T}(\ell)$ of $\ell$} with respect to $\g \in T$ (see \cite[Definition 12.2, 13.1]{FoT}).

 First, we assume that $T$ is an ideal triangulation. If $\g \in T$ is not inside a self-folded triangle of $T$, then $b_{\g,T}(\ell)$ is defined by a sum of contributions from all intersections of $\g$ and $\ell$ as follows: Such an intersection contributes $+1$ (resp., $-1$) to $b_{\g,T}(\ell)$ if a segment of $\ell$ cuts through the quadrilateral surrounding $\g$ as in the left (resp., right) diagram of Figure \ref{SZ}.
\begin{figure}[htp]
\centering
$+1$
\begin{tikzpicture}[baseline=0mm]
 \coordinate (u) at (0,2);
 \coordinate (l) at (-1,1);
 \coordinate (r) at (1,1);
 \coordinate (d) at (0,0);
 \coordinate (s) at (-1,0.5);
 \coordinate (t) at (1,1.5);
 \draw (u)--(l)--(d)--(r)--(u)--node[fill=white,inner sep=2,pos=0.4]{$\g$}(d);
 \draw (s)..controls (-0.1,0.7) and (0.3,1)..(t)node[above]{$\ell$};
\end{tikzpicture}
   \hspace{20mm}
\begin{tikzpicture}[baseline=0mm]
 \coordinate (u) at (0,2);
 \coordinate (l) at (-1,1);
 \coordinate (r) at (1,1);
 \coordinate (d) at (0,0);
 \coordinate (s) at (-1,1.5);
 \coordinate (t) at (1,0.5);
 \draw (u)--(l)--(d)--(r)--(u)--node[fill=white,inner sep=2,pos=0.4]{$\g$}(d);
 \draw (s)node[above]{$\ell$}..controls (-0.3,0.9) and (0.3,0.6)..(t);
\end{tikzpicture}
 $-1$
   \caption{The contribution from a segment of the laminate $\ell$ on the left (resp., right) is $+1$ (resp., $-1$)}
   \label{SZ}
\end{figure}
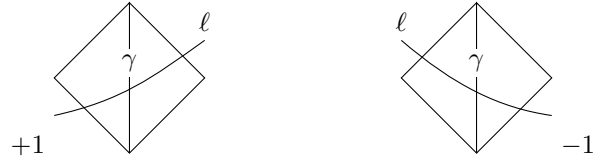
 Suppose that $\g \in T$ is inside a self-folded triangle $\{\g,\g'\}$ of $T$, where $\g'$ is a loop enclosing exactly one puncture $p$. Then we define $b_{\g,T}(\ell) = b_{\g',T}(\ell^{(p)})$, where $\ell^{(p)}$ is a laminate obtained from $\ell$ by changing the directions of its spirals at $p$ if they exist.

 Next, we assume that $T$ is a tagged triangulation. If there is an ideal triangulation $T^0$ satisfying $T=\iota(T^0)$, then we define $b_{\g,T}(\ell) = b_{\g^0,T^0}(\ell)$, where $\g=\iota(\g^0)$. For an arbitrary $T$, we can obtain a tagged triangulation $T^{(p_1\cdots p_m)}$ from $T$ by simultaneous changing all tags at punctures $p_1,\ldots,p_m$ (possibly $m=0$), in such a way that there is a unique ideal triangulation $T^0$ satisfying $T^{(p_1\cdots p_m)}=\iota(T^0)$ (see \cite[Remark 3.11]{MSW11}). Then we define $b_{\g,T}(\ell) = b_{\g^{(p_1\cdots p_m)},T^{(p_1\cdots p_m)}}\bigl((\cdots((\ell^{(p_1)})^{(p_2)})\cdots)^{(p_m)}\bigr)$, where $\g^{(p_1\cdots p_m)}$ corresponds to $\g$.

 For a multi-set $L=L' \sqcup \{\ell\}$ of laminates of $(S,M)$, the {\it shear coordinate $b_{\g,T}(L)$ of $L$} with respect to $\g \in T$ is inductively defined by
\[
 b_{\g,T}(L) = b_{\g,T}(L') + b_{\g,T}(\ell).
\]
 We denote by $b_T(L)$ a vector $(b_{\g,T}(L))_{\g \in T} \in \bZ^{|T|}$. Note that the shear coordinate cone $C_T(L)$ is a cone spanned by $b_T(\ell)$ for $\ell \in L$. These vectors have the following property.

\begin{theorem}\cite[Theorems 12.3, 13.6]{FoT}\label{thurston}
 Let $T$ be a tagged triangulation of $(S,M)$. The map sending laminations $L$ to $b_T(L)$ induces a bijection between the set of laminations on $(S,M)$ and $\bZ^{|T|}$.
\end{theorem}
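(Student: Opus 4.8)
The plan is to produce an explicit geometric inverse to the map $L \mapsto b_T(L)$, so that injectivity and surjectivity come out together, after first reducing to the case of an ideal triangulation. The reduction is built into the definition of shear coordinates: tag-changing at a puncture $p$ is a bijection on the set of laminations via $\ell \mapsto \ell^{(p)}$, and it transforms shear coordinates in an invertible way, as does the passage from $T$ to the ideal triangulation $T^0$ with $T^{(p_1\cdots p_m)}=\iota(T^0)$. So first I would fix such a $T^0$ and prove the statement there, carrying along the defining rule $b_{\g,T}=b_{\g',T}(\ell^{(p)})$ for an arc $\g$ inside a self-folded triangle whose loop $\g'$ encloses $p$.

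The main local tool is a sign-coherence statement. Put a lamination $L$ in minimal position with respect to $T^0$ and take an arc $\g$ whose surrounding quadrilateral is embedded, i.e.\ has four distinct sides. Every essential crossing of $\g$ joins the two sides opposite across $\g$, and any two disjoint such strands must join the same pair of opposite sides, since the alternative pairing links the four sides alternately and would force a crossing. Hence all crossings of the pairwise-disjoint family $L$ with such a $\g$ have the same type in Figure \ref{SZ}, giving
\[
 b_{\g,T}(L)=\varepsilon_{\g}\cdot\#(L\cap\g), \qquad \varepsilon_{\g}\in\{+1,-1\},
\]
so that $|b_{\g,T}(L)|$ is exactly the geometric intersection number of $L$ with $\g$. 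When the quadrilateral is not embedded, crossings of opposite type can coexist and partially cancel, and one has only $|b_{\g,T}(L)|\le\#(L\cap\g)$; the same caveat applies at self-folded arcs after the defining reduction to $\g'$.

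Granting this local analysis, both directions follow from Thurston's parametrization of integral laminations by shear coordinates, adapted to $(S,M)$. For injectivity, the signed entries of $b_T(L)$ recover, triangle by triangle, the intersection numbers and thus the number of strands of each of the three connection types, hence the isotopy type of $L$ inside every triangle; the remaining freedom — the spiralling direction at each puncture and the twisting along arcs — is precisely what the signs $\varepsilon_{\g}$ together with $\ell\mapsto\ell^{(p)}$ record, and reconstructing it pins down $L$ uniquely. For surjectivity I would run this recipe in reverse: given $v=(v_{\g})\in\bZ^{|T|}$, use $|v_{\g}|$ as the prescribed number of strands across each arc, solve the local matching in each triangle, and fix the spiralling and twisting from the signs of $v_{\g}$; the resulting embedded, pairwise-disjoint curves form a lamination whose shear coordinates are $v$. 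In effect one builds a two-sided inverse to $L\mapsto b_T(L)$.

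The sign-coherence lemma itself is elementary; the genuinely delicate work is everywhere that a quadrilateral fails to be embedded or a triangle is self-folded, and in the spiralling bookkeeping at punctures. It is exactly the ability of strands to terminate on boundary segments or to spiral into punctures that lets the absolute values $|v_{\g}|$ violate the triangle inequalities which would otherwise constrain a purely internal configuration, and hence that lets \emph{every} $v\in\bZ^{|T|}$ be realized; controlling these cancellations and matching them to the signs of the shear coordinates is where the bulk of the argument in \cite{FoT} is concentrated. One must also check throughout that the reconstructed curves are in minimal position and pairwise non-crossing, so that no intersection is miscounted. These are precisely the points at which the tagged statement of Theorem \ref{thurston} goes beyond the classical unpunctured case.
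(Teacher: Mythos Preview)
The paper does not prove Theorem~\ref{thurston} at all: it is stated with the citation \cite[Theorems 12.3, 13.6]{FoT} and immediately followed by an example, with no argument given. So there is no proof in the paper to compare your proposal against; the result is imported wholesale from Fomin--Thurston.

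That said, your sketch has a genuine gap in its key lemma. You claim that for an arc $\gamma$ with embedded surrounding quadrilateral, every crossing of a lamination $L$ with $\gamma$ contributes the same sign, yielding $b_{\gamma,T}(L)=\varepsilon_{\gamma}\cdot\#(L\cap\gamma)$. This is false. A strand in minimal position may enter via one side of the left triangle and exit via the \emph{adjacent} side of the right triangle (both sides sharing the vertex $u$, say), crossing $\gamma$ once but contributing $0$ to the shear coordinate. Such a zero-contribution strand can coexist disjointly with a $+1$-type strand, so $|b_{\gamma,T}(L)|<\#(L\cap\gamma)$ occurs even for embedded quadrilaterals. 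Your linking argument only rules out coexistence of $+1$ and $-1$ types; it says nothing about the two $0$-contribution types. Consequently the shear coordinates do \emph{not} directly recover geometric intersection numbers, and the reconstruction step ``use $|v_{\gamma}|$ as the prescribed number of strands across each arc'' does not work as stated. The actual argument in \cite{FoT} proceeds differently, essentially via a recursive ``opening'' of punctures and an induction compatible with flips, rather than through a direct intersection-number inverse.
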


\begin{example}\label{ex1}
 For a digon $(S,M)$ with exactly one puncture, all laminates are given as follows:
\[
\begin{tikzpicture}[baseline=-10mm]
 \coordinate (0) at (0,0);
 \coordinate (1) at (0,-1);
 \coordinate (2) at (0,-2);
 \coordinate (3) at (0.57,-0.65);
 \draw (2) to [out=180,in=180] (0); \draw (2) to [out=0,in=0] (0);
 \draw[blue] (3) to [out=180,in=90] (-0.25,-1);
 \draw[blue] (-0.25,-1) to [out=-90,in=180] (0,-1.23);
 \draw[blue] (0,-1.23) to [out=0,in=-90] (0.2,-1);
 \draw[blue] (0.2,-1) to [out=90,in=0] (0,-0.82);
 \fill(0) circle (0.7mm); \fill (1) circle (0.7mm); \fill (2) circle (0.7mm);
 \node[blue] at(-0.1,-0.5) {$\ell_1$};
\end{tikzpicture}
   \hspace{7mm}
\begin{tikzpicture}[baseline=-10mm]
 \coordinate (0) at (0,0);
 \coordinate (1) at (0,-1);
 \coordinate (2) at (0,-2);
 \coordinate (3) at (0.57,-0.65);
 \coordinate (5) at (0.57,-1.35);
 \draw (2) to [out=180,in=180] (0); \draw (2) to [out=0,in=0] (0);
 \draw[blue] (3) to [out=180,in=90] (-0.4,-1);
 \draw[blue] (-0.4,-1) to [out=-90,in=180] (5);
 \fill(0) circle (0.7mm); \fill (1) circle (0.7mm); \fill (2) circle (0.7mm);
 \node[blue] at(-0.1,-0.5) {$\ell_2$};
\end{tikzpicture}
   \hspace{7mm}
\begin{tikzpicture}[baseline=-10mm]
 \coordinate (0) at (0,0);
 \coordinate (1) at (0,-1);
 \coordinate (2) at (0,-2);
 \coordinate (3) at (0.57,-1.35);
 \draw (2) to [out=180,in=180] (0); \draw (2) to [out=0,in=0] (0);
 \draw[blue] (3) to [out=180,in=-90] (-0.25,-1);
 \draw[blue] (-0.25,-1) to [out=90,in=180] (0,-0.77);
 \draw[blue] (0,-0.77) to [out=0,in=90] (0.2,-1);
 \draw[blue] (0.2,-1) to [out=-90,in=0] (0,-1.18);
 \fill(0) circle (0.7mm); \fill (1) circle (0.7mm); \fill (2) circle (0.7mm);
 \node[blue] at(-0.1,-0.5) {$\ell_3$};
\end{tikzpicture}
   \hspace{7mm}
\begin{tikzpicture}[baseline=-10mm]
 \coordinate (0) at (0,0);
 \coordinate (1) at (0,-1);
 \coordinate (2) at (0,-2);
 \coordinate (3) at (-0.57,-0.65);
 \draw (2) to [out=180,in=180] (0); \draw (2) to [out=0,in=0] (0);
 \draw[blue] (3) to [out=0,in=90] (0.25,-1);
 \draw[blue] (0.25,-1) to [out=-90,in=0] (0,-1.23);
 \draw[blue] (0,-1.23) to [out=180,in=-90] (-0.2,-1);
 \draw[blue] (-0.2,-1) to [out=90,in=180] (0,-0.82);
 \fill(0) circle (0.7mm); \fill (1) circle (0.7mm); \fill (2) circle (0.7mm);
 \node[blue] at(0.1,-0.5) {$\ell_4$};
\end{tikzpicture}
   \hspace{7mm}
\begin{tikzpicture}[baseline=-10mm]
 \coordinate (0) at (0,0);
 \coordinate (1) at (0,-1);
 \coordinate (2) at (0,-2);
 \coordinate (3) at (-0.57,-0.65);
 \coordinate (5) at (-0.57,-1.35);
 \draw (2) to [out=180,in=180] (0); \draw (2) to [out=0,in=0] (0);
 \draw[blue] (3) to [out=0,in=90] (0.4,-1);
 \draw[blue] (0.4,-1) to [out=-90,in=0] (5);
 \fill(0) circle (0.7mm); \fill (1) circle (0.7mm); \fill (2) circle (0.7mm);
 \node[blue] at(0.1,-0.5) {$\ell_5$};
\end{tikzpicture}
   \hspace{7mm}
\begin{tikzpicture}[baseline=-10mm]
 \coordinate (0) at (0,0);
 \coordinate (1) at (0,-1);
 \coordinate (2) at (0,-2);
 \coordinate (3) at (-0.57,-1.35);
 \draw (2) to [out=180,in=180] (0); \draw (2) to [out=0,in=0] (0);
 \draw[blue] (3) to [out=0,in=-90] (0.25,-1);
 \draw[blue] (0.25,-1) to [out=90,in=0] (0,-0.77);
 \draw[blue] (0,-0.77) to [out=180,in=90] (-0.2,-1);
 \draw[blue] (-0.2,-1) to [out=-90,in=180] (0,-1.18);
 \fill(0) circle (0.7mm); \fill (1) circle (0.7mm); \fill (2) circle (0.7mm);
 \node[blue] at(0.1,-0.5) {$\ell_6$};
\end{tikzpicture}.
\]
 We consider the following tagged triangulation $T$:
\[
 T=
\begin{tikzpicture}[baseline=-10mm]
 \coordinate (0) at (0,0);
 \coordinate (1) at (0,-1);
 \coordinate (2) at (0,-2);
 \draw (2) to [out=180,in=180] (0); \draw (2) to [out=0,in=0] (0);
 \draw (1) to [out=-60,in=-120,relative] node[fill=white,inner sep=2]{$1$} (2);
 \draw (1) to [out=60,in=120,relative] node[pos=0.2]{\rotatebox{40}{\footnotesize $\bowtie$}}node[fill=white,inner sep=2]{$2$} (2);
 \fill(0) circle (0.7mm); \fill (1) circle (0.7mm); \fill (2) circle (0.7mm);
 \node at(0,-0.75) {$p$};
\end{tikzpicture}
\text{, where}\hspace{2mm}
 T^0=
\begin{tikzpicture}[baseline=-10mm]
 \coordinate (0) at (0,0);
 \coordinate (1) at (0,-1);
 \coordinate (2) at (0,-2);
 \draw (2) to [out=180,in=180] (0); \draw (2) to [out=0,in=0] (0);
 \draw (1) to node[fill=white,inner sep=2]{$1^0$} (2);
 \fill(0) circle (0.7mm); \fill (1) circle (0.7mm); \fill (2) circle (0.7mm);
 \draw (2) .. controls (0.5,-1.3) and (0.3,-0.7) .. (0,-0.7);
 \draw (2) .. controls (-0.5,-1.3) and (-0.3,-0.7) .. (0,-0.7) node[above]{$2^0$};
\end{tikzpicture}.
\]
 The shear coordinate $b_{2,T}(\ell_1)$ is given by $b_{2^0,T^0}(\ell_1)=-1$. Since $\ell_3^{(p)}=\ell_1$, we have the equalities
\[
 b_{1,T}(\ell_3) = b_{1^0,T^0}(\ell_3) = b_{2^0,T^0}(\ell_3^{(p)}) = b_{2^0,T^0}(\ell_1) = -1.
\]
 Similarly, for $i \in \{1,2\}$ and $j \in \{1,\ldots,6\}$, the shear coordinates $b_{i,T}(\ell_j)$ and $b_{T}(\ell_j)$ are given as follows:
\[
\renewcommand{\arraystretch}{1.5}{
\begin{tabular}{c||c|c|c|c|c|c}
 i\ \textbackslash\ j & 1 & 2 & 3 & 4 & 5 & 6 \\\hline\hline
 1 & 0 & -1 & -1 & 0 & 1 & 1 \\\hline
 2 & -1 & -1 & 0 & 1 & 1 & 0
\end{tabular}}
   \hspace{10mm}
\begin{tikzpicture}[baseline=0mm,scale=1]
 \coordinate (0) at (0,0); \coordinate (x) at (1,0); \coordinate (-x) at (-1,0);
 \coordinate (y) at (0,1); \coordinate (-y) at (0,-1);
 \draw[->] (0)--(x) node[right]{$b_T(\ell_6)$};
 \draw (0)--(-x) node[left]{$b_T(\ell_3)$};
 \draw[->] (0)--(y) node[above]{$b_T(\ell_4)$};
 \draw (0)--(-y) node[below]{$b_T(\ell_1)$};
 \draw (0)--(1,1) node[right]{$b_T(\ell_5)$};
 \draw (0)--(-1,-1) node[left]{$b_T(\ell_2)$};
\end{tikzpicture}
\]
 In particular, we have
\[
 \bigcup_{j=1}^6 C_T(\{\ell_j,\ell_{j+1}\}) = \bR^2,
\]
 where $\ell_7 = \ell_1$. On the other hand, all laminations on $(S,M)$ are given by $\{m \ell_j,n \ell_{j+1}\}$ for $j \in \{1,\ldots,6\}$ and $m, n \in \bZ_{\ge 0}$. Since $C_T(\{\ell_{j-1},\ell_j\}) \cap C_T(\{\ell_j,\ell_{j+1}\})=C_T(\{\ell_j\})$ and $b_T$ induces a bijection
\[
 b_T : \{\{m \ell_j,n \ell_{j+1}\} \mid m, n \in \bZ_{\ge 0}\} \longleftrightarrow C_T(\{\ell_j,\ell_{j+1}\}) \cap  \bZ^2,
\]
 there is a bijection between the set of laminations on $(S,M)$ and $\bZ^2$.
\end{example}

\subsection{Elementary and exceptional laminates}

 Non-closed laminates of $(S,M)$ are divided into two types, elementary and exceptional. For a tagged arc $\de$ of $(S,M)$, we define an {\it elementary laminate} $\e(\de)$ as follows:
\begin{itemize}
 \item $\e(\de)$ is a laminate running along $\de$ in a small neighborhood of it;
 \item If $\de$ has an endpoint $o$ on a component $C$ of $\partial S$, then the corresponding endpoint of $\e(\de)$ is located near $o$ on $C$ in the clockwise direction as in the left diagram of Figure \ref{elelam};
 \item If $\de$ has an endpoint at a puncture $p$, then the corresponding end of $\e(\de)$ is a spiral around $p$ clockwise (resp., counterclockwise) if $\de$ is tagged plain (resp., notched) at $p$ as in the right diagram of Figure \ref{elelam}.
\end{itemize}
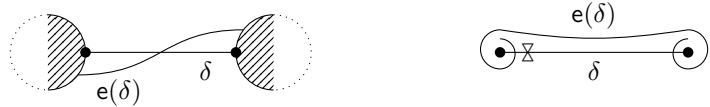
\begin{figure}[htp]
\centering
\begin{tikzpicture}[baseline=0mm]
 \coordinate (u) at (0,0.5);
 \coordinate (l) at (-0.5,0);
 \coordinate (r) at (0.5,0);
 \coordinate (d) at (0,-0.5);
 \coordinate (u1) at (3,0.5);
 \coordinate (l1) at (2.5,0);
 \coordinate (r1) at (3.5,0);
 \coordinate (d1) at (3,-0.5);
 \draw (r)--node[below,pos=0.8]{$\de$}(l1);
 \draw (0.4,-0.3) .. controls (1.5,-0.3) and (1.5,0.3) ..node[below,pos=0.2]{$\e(\de)$} (2.6,0.3);
 \draw[pattern=north east lines] (u) arc (90:-90:5mm);
 \draw[pattern=north east lines] (u1) arc (90:270:5mm);
 \draw[dotted] (u) arc (90:270:5mm);
 \draw[dotted] (u1) arc (90:-90:5mm);
 \fill (r) circle (0.7mm); \fill (l1) circle (0.7mm);
\end{tikzpicture}
   \hspace{20mm}
\begin{tikzpicture}[baseline=0mm]
 \coordinate (l) at (0,0);
 \coordinate (r) at (2.5,0);
 \draw (l)--node[below]{$\de$}node[pos=0.15]{\rotatebox{90}{\footnotesize $\bowtie$}}(r);
 \draw (0,0.3) .. controls (1,0.15) and (1.5,0.15) ..node[above]{$\e(\de)$} (2.5,0.3);
 \draw (0,0.3) arc (90:270:2.6mm);
 \draw (0,-0.22) arc (-90:90:2mm);
 \draw (2.5,0.3) arc (90:-90:2.6mm);
 \draw (2.5,-0.22) arc (-90:-270:2mm);
 \fill (l) circle (0.7mm); \fill (r) circle (0.7mm);
\end{tikzpicture}
   \caption{Elementary laminates of tagged arcs}
   \label{elelam}
\end{figure}
 It follows from the construction that the map $\e$ from the set of tagged arcs of $(S,M)$ to the set of laminates is injective. For an elementary laminate $\ell$, we denote by $\e^{-1}(\ell)$ a unique tagged arc $\de$ such that $\e(\de)=\ell$. Note that, for a tagged arc $\de$, a lamination $\{\e(\de)\}$ is a reflection of the elementary lamination of $\de$ defined in \cite[Definition 17.2]{FoT}. Our convention is more convenient for our aim.

 Elementary laminates have the following properties.

\begin{proposition}\label{bij}
 (1) Let $\de$ and $\de'$ be tagged arcs such that $\de^{\circ} \neq \de'^{\circ}$. Then $\de$ and $\de'$ are compatible if and only if $\e(\de)$ and $\e(\de')$ are compatible.\par
 (2) The map $\e$ induces a bijection between the set of partial tagged triangulations of $(S,M)$ without pairs of conjugate arcs and the set of laminations of $(S,M)$ consisting only of distinct elementary laminates.
\end{proposition}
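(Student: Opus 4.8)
The plan is to reduce both statements to a local picture: the laminate $\e(\delta)$ runs parallel to $\delta$ inside an arbitrarily small neighbourhood of it, so the only places where $\e(\delta)$ and $\e(\delta')$ can be forced to cross (or forced apart) are the interior intersection points of $\delta^{\circ}$ and $\delta'^{\circ}$ and the marked points that are common endpoints of $\delta^{\circ}$ and $\delta'^{\circ}$. I would fix minimal position of all curves throughout and handle these two sources separately, using a bigon-type argument to transfer (un)removability of crossings.

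For part (1), fix $\delta,\delta'$ with $\delta^{\circ}\neq\delta'^{\circ}$. A transversal crossing of $\delta^{\circ}$ and $\delta'^{\circ}$ in the interior of $S$ produces one of the parallel curves $\e(\delta),\e(\delta')$, and it cannot be removed by isotopy: an innermost bigon between $\e(\delta)$ and $\e(\delta')$ would force a bigon between $\delta^{\circ}$ and $\delta'^{\circ}$, impossible in minimal position; conversely an interior crossing of the laminates forces one of the arcs. Hence $\delta^{\circ}$ and $\delta'^{\circ}$ are disjoint in the interior exactly when $\e(\delta)$ and $\e(\delta')$ have no interior crossing. It then remains to examine each common endpoint $o$ of $\delta^{\circ}$ and $\delta'^{\circ}$. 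If $o$ lies on $\partial S$, both arcs are plain at $o$ and the clockwise convention pushes the relevant ends of $\e(\delta),\e(\delta')$ to distinct points of $\partial S$ near $o$, in an order compatible with the cyclic order of $\delta^{\circ},\delta'^{\circ}$ at $o$, so no crossing is created there; this matches the fact that a common boundary endpoint never obstructs compatibility of tagged arcs. If $o$ is a puncture $p$, then by construction the ends of $\e(\delta),\e(\delta')$ at $p$ spiral in the same direction precisely when $\delta$ and $\delta'$ carry the same tag at $p$, and two spirals at $p$ with the same direction can be separated near $p$, while two spirals with opposite directions are forced to cross near $p$ unremovably. Since, under the hypothesis $\delta^{\circ}\neq\delta'^{\circ}$, compatibility of $\delta$ and $\delta'$ means exactly ``no interior crossing, and equal tags at every common endpoint'', combining the three cases yields the equivalence.

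For part (2), first note that $\e$ is injective on tagged arcs, so $P\mapsto\e(P):=\{\e(\delta)\mid\delta\in P\}$ takes a finite set of distinct tagged arcs to a finite multiset of distinct elementary laminates and is injective. The extra input needed is the remark that two distinct compatible tagged arcs with equal underlying arc always form a pair of conjugate arcs: if $\delta\neq\delta'$ are compatible and $\delta^{\circ}=\delta'^{\circ}=:\gamma$, the second compatibility condition forces them to agree at one endpoint and differ at the other, which must be a puncture $p$; taking as the enclosing loop the boundary of a small neighbourhood of $\gamma$ exhibits $\gamma$ as the interior arc of a self-folded triangle, so $\{\delta,\delta'\}$ is, up to simultaneous tag changes, the conjugate pair of that triangle. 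Consequently, if $P$ is a partial tagged triangulation without conjugate pairs, then its members have pairwise distinct underlying arcs, part (1) applies to every pair, and $\e(P)$ is a lamination of distinct elementary laminates. Conversely, given such a lamination $\Lambda$, set $\delta_{\ell}=\e^{-1}(\ell)$ for $\ell\in\Lambda$; if $\ell\neq\ell'$ had $\delta_{\ell}^{\circ}=\delta_{\ell'}^{\circ}$, these two laminates would run along the same arc and differ in spiral direction at a common puncture, hence cross by the puncture analysis above, contradicting compatibility, so the $\delta_{\ell}^{\circ}$ are pairwise distinct; then part (1) shows the $\delta_{\ell}$ are pairwise compatible, and as their underlying arcs are pairwise distinct, $P:=\{\delta_{\ell}\mid\ell\in\Lambda\}$ is a partial tagged triangulation containing no conjugate pair, with $\e(P)=\Lambda$. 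This gives surjectivity, and together with injectivity the bijection follows.

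I expect the main obstacle to be the endpoint bookkeeping in part (1): making fully rigorous, in minimal position, both that the clockwise convention at boundary marked points never manufactures a crossing and that at a puncture ``equal tags'' $\Leftrightarrow$ ``equal spiral directions'' $\Leftrightarrow$ ``locally separable''. One could instead shortcut this by deducing part (1) from the compatibility properties of the elementary laminations in \cite[Definition 17.2]{FoT}, since $\{\e(\delta)\}$ differs from the lamination used there only by an orientation-reversing homeomorphism of $(S,M)$, which preserves compatibility of curves.
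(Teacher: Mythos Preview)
Your proof is correct and follows essentially the same approach as the paper: reduce to the local picture near marked points, match tags with spiral directions at common punctures, and for (2) isolate the case $\de^\circ=\de'^\circ$ as precisely the conjugate-pair case before invoking (1). The paper's version is considerably terser---it dispatches the interior-crossing issue with the single remark that $\e$ only modifies curves near marked points, whereas you spell out a bigon argument, and it leaves the two directions of the bijection in (2) implicit---but the logical structure is identical. Your neighbourhood construction of the enclosing loop in (2) is unnecessary overhead: the paper simply reads off from the definition (Figure~\ref{pair}) that two distinct compatible tagged arcs with the same underlying arc form a conjugate pair.
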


\begin{proof}
 (1) Since $\e$ transforms $\de$ and $\de'$ just around the marked points of $(S,M)$, it is enough to consider neighborhoods of their endpoints. In particular, if $\de$ and $\de'$ have no common endpoints, the assertion holds. Suppose that $\de$ and $\de'$ have at least one common endpoint. Since $\de^{\circ} \neq \de'^{\circ}$, $(\de,\de')$ is not a pair of conjugate arcs. Thus $\de$ and $\de'$ are compatible if and only if the ends of $\de$ and $\de'$ at each common endpoint are tagged in the same way. By the definition of $\e$, it is equivalent that $\e(\de)$ and $\e(\de')$ are compatible.

 (2) If two distinct tagged arcs $\de$ and $\de'$ satisfying $\de^{\circ} = \de'^{\circ}$ are compatible, then $(\de,\de')$ is a pair of conjugate arcs, in which case $\e(\de)$ and $\e(\de')$ are not compatible. Therefore, the assertion follows from (1).
\end{proof}

 Laminates which are neither closed nor elementary are called {\it exceptional}. They are characterized as follows.

\begin{proposition}\label{excep}
 A laminate is exceptional if and only if it is one of the following curves (Figure \ref{exc}):
\begin{itemize}
 \item a curve enclosing exactly one puncture whose both endpoints lie on a common boundary segment;
 \item a curve enclosing exactly one puncture whose both ends are spirals around a common puncture in the same direction.
\end{itemize}
\end{proposition}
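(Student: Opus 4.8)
The plan is to invert $\e$ wherever possible and read off when the inversion fails. For a non-closed laminate $\ell$, define a tagged curve $\widetilde{\ell}$ by reversing the conventions in the definition of $\e$ (Figure~\ref{elelam}): slide each endpoint of $\ell$ lying on a boundary component $C$ counterclockwise along $C$ to the first marked point, and replace each end of $\ell$ that spirals around a puncture $p$ by an end at $p$, tagged plain if the spiral is clockwise and notched if it is counterclockwise. This straightening is a left inverse of $\e$, and moreover $\e(\widetilde{\ell})=\ell$ as soon as $\widetilde{\ell}$ happens to be an honest tagged arc. Hence a non-closed laminate $\ell$ is elementary if and only if $\widetilde{\ell}$ is a tagged arc, and it suffices to show that this fails exactly for the two curves in the statement.

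First I would check that the underlying curve $\widetilde{\ell}^{\,\circ}$ is always an arc. It is non-self-intersecting, since $\ell$ is and the straightening is performed in disjoint neighborhoods of the ends of $\ell$, and it meets $M\cup\partial S$ only at its endpoints. It cannot cut out an unpunctured monogon: otherwise $\widetilde{\ell}^{\,\circ}$ is a loop at a marked point $m$ bounding a disk with no puncture, and then, if $m$ lies on $\partial S$, the curve $\ell$ is isotopic to a piece of $\partial S$ with no marked point, while if $m$ is a puncture, $\ell$ has both ends spiraling around $m$ and encloses nothing else; both are forbidden for laminates (Figure~\ref{fignonlam}). Likewise $\widetilde{\ell}^{\,\circ}$ cannot cut out an unpunctured digon, since that would force both ends of $\ell$ onto $\partial S$ with $\ell$ isotopic to a piece of $\partial S$ carrying exactly one marked point, again forbidden. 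So $\widetilde{\ell}^{\,\circ}$ is a genuine arc.

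It remains to determine when $\widetilde{\ell}$ fails to be a tagged arc although $\widetilde{\ell}^{\,\circ}$ is an arc. If $\widetilde{\ell}^{\,\circ}$ is not a loop, then $\widetilde{\ell}$ is automatically a tagged arc (its boundary endpoints are plain by construction). If $\widetilde{\ell}^{\,\circ}$ is a loop at a marked point $m$, then its two ends carry the same tag: either $m$ lies on $\partial S$ and both are plain, or $m$ is a puncture, both ends of $\ell$ spiral around $m$, and these spirals wind in the same direction --- two spirals into $m$ in opposite directions would intersect arbitrarily near $m$. Therefore the only remaining obstruction is that $\widetilde{\ell}^{\,\circ}$ cuts out a monogon with exactly one puncture, i.e.\ is a loop at some $m$ bounding a once-punctured disk. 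When $m$ lies on $\partial S$ this says precisely that both endpoints of $\ell$ lie on one boundary segment and $\ell$ encloses exactly one puncture, the first curve of the statement; when $m$ is a puncture it says precisely that both ends of $\ell$ spiral around $m$ in the same direction and $\ell$ encloses exactly one (other) puncture, the second. Conversely each such curve is a laminate --- it encloses a puncture, hence is none of the forbidden curves --- whose straightening is a loop bounding a once-punctured disk, hence is not elementary and, being non-closed, is exceptional.

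The step I expect to be the main obstacle is the topological bookkeeping in the second paragraph: checking that the straightening procedure never produces an unpunctured monogon or digon, by matching each such hypothetical configuration against the precise list of curves excluded from being laminates, together with the small observation (also used for the tagging) that a laminate cannot spiral into a single puncture from both ends in opposite directions.
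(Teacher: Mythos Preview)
Your proof is correct and follows the same approach as the paper: apply the inverse of $\e$ to a non-closed laminate, observe that the underlying curve is an arc, and note that the only way the result can fail to be a tagged arc is if that arc is a loop cutting out a once-punctured monogon. The paper compresses this into three sentences and leaves the topological bookkeeping of your second paragraph (and the same-direction-spiral observation) implicit, whereas you have actually carried out those checks; nothing you added is superfluous or incorrect.
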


\begin{proof}
 Applying the same transformation as $\e^{-1}$ to a non-closed laminate $\ell$ and forgetting its tags, we obtain a unique ideal arc. In general, an ideal arc $\g$ is not obtained from a tagged arc by forgetting its tags if and only if $\g$ is a loop cutting out a monogon with exactly one puncture. Therefore, $\ell$ is not elementary if and only if it is one of the desired cases.
\end{proof}

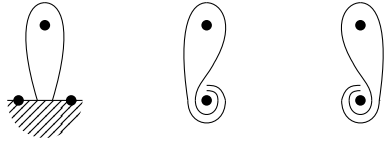
\begin{figure}[htp]
\centering
\begin{tikzpicture}[baseline=0mm]
 \coordinate (0) at (0,0);
 \coordinate (p) at (0,1);
 \draw (-0.1,0) .. controls (-0.4,1) and (-0.2,1.3) .. (0,1.3);
 \draw (0.1,0) .. controls (0.4,1) and (0.2,1.3) .. (0,1.3);
 \fill (p) circle (0.7mm);
 \draw (-0.5,0)--(0.5,0);
 \fill (-0.35,0) circle (0.7mm);   \fill (0.35,0) circle (0.7mm);
 \fill[pattern=north east lines] (-0.5,0) arc (-180:0:5mm);
\end{tikzpicture}
 \hspace{10mm}
\begin{tikzpicture}[baseline=0mm]
 \coordinate (0) at (0,0);
 \coordinate (p) at (0,1);
 \draw (-0.25,0) .. controls (-0.4,1) and (-0.2,1.3) .. (0,1.3);
 \draw (0.1,0.5) .. controls (0.4,1) and (0.2,1.3) .. (0,1.3);
 \fill (p) circle (0.7mm);
 \draw (-0.25,0) .. controls (-0.25,-0.4) and (0.25,-0.4) .. (0.25,0);
 \draw (0.25,0) .. controls (0.2,0.2) and (0.1,0.2) .. (0,0.2);
 \draw (0.1,0.5) .. controls (0,0.3) and (-0.15,0.2) .. (-0.15,0);
 \draw (-0.15,0) .. controls (-0.15,-0.25) and (0.15,-0.25) .. (0.15,0);
 \draw (0.15,0) .. controls (0.15,0.1) and (0.1,0.13) .. (0,0.13);
 \fill (0,0) circle (0.7mm);
\end{tikzpicture}
 \hspace{10mm}
\begin{tikzpicture}[baseline=0mm]
 \coordinate (0) at (0,0);
 \coordinate (p) at (0,1);
 \draw (-0.1,0.5) .. controls (-0.4,1) and (-0.2,1.3) .. (0,1.3);
 \draw (0.25,0) .. controls (0.4,1) and (0.2,1.3) .. (0,1.3);
 \fill (p) circle (0.7mm);
 \draw (0.25,0)--(0.25,0);
 \draw (0.25,0) .. controls (0.25,-0.4) and (-0.25,-0.4) .. (-0.25,0);
 \draw (-0.25,0) .. controls (-0.2,0.2) and (-0.1,0.2) .. (0,0.2);
 \draw (-0.1,0.5) .. controls (0,0.3) and (0.15,0.2) .. (0.15,0);
 \draw (0.15,0) .. controls (0.15,-0.25) and (-0.15,-0.25) .. (-0.15,0);
 \draw (-0.15,0) .. controls (-0.15,0.1) and (-0.1,0.13) .. (0,0.13);
 \fill (0,0) circle (0.7mm);
\end{tikzpicture}
   \caption{Exceptional laminates}
   \label{exc}
\end{figure}

 Note that exceptional laminates coincide with excluded curves for quasi-laminations in \cite{Re14b}. To interpret shear coordinates of exceptional laminates as ones of elementary laminates, we introduce the following notations. For an exceptional laminate $\ell$ of $(S,M)$, elementary laminates $\ell_{\sf p}$ and $\ell_{\sf q}$ are given by
\[
 \ell
\begin{tikzpicture}[baseline=5mm]
 \coordinate (0) at (0,0);
 \coordinate (p) at (0,1);
 \draw (-0.1,0) .. controls (-0.4,1) and (-0.2,1.3) .. (0,1.3);
 \draw (0.1,0) .. controls (0.4,1) and (0.2,1.3) .. (0,1.3);
 \fill (p) circle (0.7mm);   \filldraw[dotted,thick,fill=white] (0) circle (2mm);
\end{tikzpicture}
 \hspace{4mm}\rightarrow\hspace{4mm}
 \ell_{\sf p}
\begin{tikzpicture}[baseline=5mm]
 \coordinate (0) at (0,0);
 \coordinate (p) at (0,1);
 \draw (-0.1,0) .. controls (-0.4,1) and (-0.2,1.2) .. (0,1.2);
 \draw (0,1.2) .. controls (0.3,1.2) and (0.3,0.8) .. (0,0.8);
 \draw (0,0.8) .. controls (-0.13,0.8) and (-0.14,0.9) .. (-0.15,1);
 \fill (p) circle (0.7mm);   \filldraw[dotted,thick,fill=white] (0) circle (2mm);
\end{tikzpicture}
 \hspace{3mm}
 \ell_{\sf q}
\begin{tikzpicture}[baseline=5mm]
 \coordinate (0) at (0,0);
 \coordinate (p) at (0,1);
 \draw (0.1,0) .. controls (0.4,1) and (0.2,1.2) .. (0,1.2);
 \draw (0,1.2) .. controls (-0.3,1.2) and (-0.3,0.8) .. (0,0.8);
 \draw (0,0.8) .. controls (0.13,0.8) and (0.14,0.9) .. (0.15,1);
 \fill (p) circle (0.7mm);   \filldraw[dotted,thick,fill=white] (0) circle (2mm);
\end{tikzpicture}
 \text{, where}\ 
\begin{tikzpicture}[baseline=-1mm]
 \draw[dotted,thick] (0,0) circle (5mm);
\end{tikzpicture}
 =
\begin{tikzpicture}[baseline=-1mm]
 \draw[dotted,thick] (0,0) circle (5mm);
 \draw (-0.15,0)--(-0.25,0.4) (0.15,0)--(0.25,0.4) (-0.5,0) to (0.5,0);
 \fill (-0.35,0) circle (0.7mm);   \fill (0.35,0) circle (0.7mm);
 \fill[pattern=north east lines] (-0.5,0) arc (-180:0:5mm);
\end{tikzpicture}
\ \text{or}\ 
\begin{tikzpicture}[baseline=-1mm]
 \draw[dotted,thick] (0,0) circle (5mm);
 \draw (-0.25,0.4)--(-0.25,0);
 \draw (-0.25,0) .. controls (-0.25,-0.4) and (0.25,-0.4) .. (0.25,0);
 \draw (0.25,0) .. controls (0.2,0.2) and (0.1,0.2) .. (0,0.2);
 \draw (0.25,0.4) .. controls (0.2,0.4) and (-0.15,0.5) .. (-0.15,0);
 \draw (-0.15,0) .. controls (-0.15,-0.25) and (0.15,-0.25) .. (0.15,0);
 \draw (0.15,0) .. controls (0.15,0.1) and (0.1,0.13) .. (0,0.13);
 \fill (0,0) circle (0.7mm);
\end{tikzpicture}
\ \text{or}\ 
\begin{tikzpicture}[baseline=-1mm]
 \draw[dotted,thick] (0,0) circle (5mm);
 \draw (0.25,0.4)--(0.25,0);
 \draw (0.25,0) .. controls (0.25,-0.4) and (-0.25,-0.4) .. (-0.25,0);
 \draw (-0.25,0) .. controls (-0.2,0.2) and (-0.1,0.2) .. (0,0.2);
 \draw (-0.25,0.4) .. controls (-0.2,0.4) and (0.15,0.5) .. (0.15,0);
 \draw (0.15,0) .. controls (0.15,-0.25) and (-0.15,-0.25) .. (-0.15,0);
 \draw (-0.15,0) .. controls (-0.15,0.1) and (-0.1,0.13) .. (0,0.13);
 \fill (0,0) circle (0.7mm);
\end{tikzpicture}\ .
\]
 In particular, ($\e^{-1}(\ell_{\sf p}),\e^{-1}(\ell_{\sf q})$) is a pair of conjugate arcs. For a lamination $L$ on $(S,M)$, we denote by $L_{\sf pq}$ the multi-set of elementary laminates obtained from $L$ by replacing exceptional laminates $\ell \in L$ with $\ell_{\sf p}$ and $\ell_{\sf q}$.

\begin{example}\label{ex2}
 In Example \ref{ex1}, $\ell_2$ and $\ell_5$ are exceptional, and $(\ell_2)_{\sf p} = \ell_3$, $(\ell_2)_{\sf q} = \ell_1$, $(\ell_5)_{\sf p} = \ell_4$ and $(\ell_5)_{\sf q} = \ell_6$. Thus we have the equalities
\[
 b_T(\ell_2)=b_T(\{(\ell_2)_{\sf p}, (\ell_2)_{\sf q}\})\ \ \text{and}\ \ b_T(\ell_5)=b_T(\{(\ell_5)_{\sf p}, (\ell_5)_{\sf q}\}).
\]
\end{example}

 In general, the same property as Example \ref{ex2} holds for arbitrary exceptional laminates.

\begin{lemma}\label{vecsep}
 Let $T$ be a tagged triangulation of $(S,M)$. For an exceptional laminate $\ell$ of $(S,M)$, we have
\[
 b_T(\ell)=b_T(\{\ell_{\sf p}, \ell_{\sf q}\}).
\]
\end{lemma}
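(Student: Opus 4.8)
\emph{Plan.} Write $p$ for the puncture enclosed by the exceptional laminate $\ell$. The strategy is to reduce the statement to a local comparison inside a small disk around $p$; away from $p$ the identity will be obvious since $\ell$ and $\ell_{\sf p}\cup\ell_{\sf q}$ agree there. First I would reduce to ideal triangulations: by definition $b_{\g,T}$ for a tagged triangulation $T$ is computed by changing tags at some punctures $p_1,\dots,p_m$ so that $T^{(p_1\cdots p_m)}=\iota(T^0)$ for an ideal triangulation $T^0$ and applying the matching spiral reversals $\ell\mapsto\ell^{(p_i)}$ to the laminate. One checks that each reversal $(-)^{(q)}$ is compatible with the passage $\ell\rightsquigarrow\{\ell_{\sf p},\ell_{\sf q}\}$, i.e. $\{(\ell^{(q)})_{\sf p},(\ell^{(q)})_{\sf q}\}=\{(\ell_{\sf p})^{(q)},(\ell_{\sf q})^{(q)}\}$ as multi-sets: for $q\neq p$ both sides result from the same local surgery at the spiral ends at $q$ shared by $\ell$, $\ell_{\sf p}$ and $\ell_{\sf q}$, while for $q=p$ the laminate $\ell$ has no spiral at $p$ and, from the description of $\ell_{\sf p},\ell_{\sf q}$, one has $(\ell_{\sf p})^{(p)}=\ell_{\sf q}$ and $(\ell_{\sf q})^{(p)}=\ell_{\sf p}$. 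Hence it suffices to prove the lemma for $T=T^0$ an ideal triangulation, identifying $b_{\iota(\g^0),\iota(T^0)}$ with $b_{\g^0,T^0}$.

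Next I would reduce to arcs not inside a self-folded triangle. If $\g\in T^0$ lies inside a self-folded triangle with enclosing loop $\g'$ (a loop around a puncture $p'$), then $b_{\g,T^0}(-)=b_{\g',T^0}((-)^{(p')})$ with $\g'$ not inside a self-folded triangle, so by the previous paragraph this case reduces to that of $\g'$. For $\g$ not inside a self-folded triangle, $b_{\g,T^0}(-)$ is the signed count of crossings with the quadrilateral surrounding $\g$ as in Figure \ref{SZ}, and since shear coordinates of multi-sets are additive it remains to prove $b_{\g,T^0}(\ell)=b_{\g,T^0}(\ell_{\sf p})+b_{\g,T^0}(\ell_{\sf q})$ for every such $\g$.

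For the local comparison, put $T^0$, $\ell$, $\ell_{\sf p}$, $\ell_{\sf q}$ simultaneously in minimal position and fix a small disk $D$ about $p$ meeting only the arcs of $T^0$ incident to $p$, chosen so small that $\ell\cap D$ is a single strand looping once around $p$, that $\ell_{\sf p}\cap D$ and $\ell_{\sf q}\cap D$ are spirals accumulating at $p$, and that $\ell\setminus D=(\ell_{\sf p}\cup\ell_{\sf q})\setminus D$ as embedded $1$-manifolds (the ends away from $p$ being matched by the construction of $\ell_{\sf p},\ell_{\sf q}$). For $\g\in T^0$ whose surrounding quadrilateral is disjoint from $\overline D$, the crossings with $\ell$ and with $\ell_{\sf p}\cup\ell_{\sf q}$ coincide together with their local pictures, so the desired identity holds termwise. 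Only finitely many $\g$ remain — those incident to $p$, those bounding a triangle with $p$ as a vertex, and the loop around $p$ in the self-folded case — and for these one reads off the contributions directly: near $p$ the surface is a cone subdivided by the arcs of $T^0$ at $p$ into sectors, $\ell$'s loop runs through a consecutive run of sectors, and the two spirals of $\ell_{\sf p}$ and $\ell_{\sf q}$ traverse the same sectors along, respectively, the ``incoming'' and the ``outgoing'' half of that loop. The tight turns of the spirals around $p$ enter and leave through two sides meeting at $p$, which is neither configuration of Figure \ref{SZ}, so they contribute $0$ in total, and what remains reproduces exactly the contribution of $\ell\cap D$; the self-folded sub-case (the loop around $p$, with the self-folded triangle on one side) is a single such diagram.

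\emph{Main obstacle.} Everything outside $D$ is formal. The content is the finite local check in the last step: confirming that the a priori infinitely many crossings of a spiral with an arc ending at $p$ sum to a finite contribution, that these contributions for $\ell_{\sf p}$ and $\ell_{\sf q}$ add up to the contribution of the finite loop $\ell\cap D$, and keeping track of all the signs in the various sector configurations (including the self-folded triangle one).
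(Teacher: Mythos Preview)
Your proposal is correct and follows essentially the same approach as the paper: both arguments localize near the enclosed puncture $p$, observe that $\ell$ and $\ell_{\sf p}\cup\ell_{\sf q}$ coincide away from $p$, and then verify that the extra spiral crossings of $\ell_{\sf p},\ell_{\sf q}$ with arcs ending at $p$ are ``corner'' crossings (entering and exiting through two sides of the surrounding quadrilateral that meet at $p$) and hence contribute $0$.

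The only real difference is organizational. You first reduce to an ideal triangulation $T^0$ via the compatibility $\{(\ell^{(q)})_{\sf p},(\ell^{(q)})_{\sf q}\}=\{(\ell_{\sf p})^{(q)},(\ell_{\sf q})^{(q)}\}$ and then handle the self-folded case by passing from the inner arc to the enclosing loop. The paper instead stays at the tagged level throughout: it lists the tagged arcs $\de_1,\ldots,\de_m$ of $T$ incident to $p$ that $\ell$ crosses in order near $p$, writes $c_i(\ell),c_i(\ell_{\sf p}),c_i(\ell_{\sf q})$ for the contribution at the $i$-th such crossing, and checks $c_i(\ell)=c_i(\ell_{\sf p})+c_i(\ell_{\sf q})$ by a short case analysis according to whether $(\de_1,\de_2)$ or $(\de_{m-1},\de_m)$ is a pair of conjugate arcs. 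The generic case gives $c_1(\ell)=c_1(\ell_{\sf p})=-1$, $c_m(\ell)=c_m(\ell_{\sf q})=1$, and all other $c_i$ vanish---which is exactly your ``spiral turns contribute $0$'' observation made explicit. Your reduction trades the paper's conjugate-pair case analysis for the self-folded loop computation you flag at the end; these are two sides of the same coin, and neither approach avoids a small finite check.
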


\begin{proof}
 By Proposition \ref{excep}, there is a unique puncture $p$ enclosed by $\ell$. We only need to prove
\begin{equation}\label{eqpq}
 b_{\de,T}(\ell)=b_{\de,T}(\{\ell_{\sf p}, \ell_{\sf q}\})
\end{equation}
 for any $\de \in T$. If $\de \in T$ is not incident to $p$, then \eqref{eqpq} is clear. We assume that $\de$ is incident to $p$. Let $\de_1,\ldots,\de_m$ be tagged arcs of $T$ incident to $p$ winding clockwisely around $p$ such that the following conditions are satisfied (see Figure \ref{locp}):
\begin{itemize}
 \item $\ell$ crosses them at points $p_1,\ldots,p_m$ in this order;
 \item The segment of $\de_i$ from $p$ to $p_i$, that of $\de_{i+1}$ from $p$ to $p_{i+1}$, and that of $\ell$ from $p_i$ to $p_{i+1}$ form a contractible triangle.
\end{itemize}
Note that if these arcs contains a pair $(\de,\epsilon)$ of conjugate arcs, then we can choice the order of $\de$ and $\epsilon$.

\begin{figure}[htp]
\centering
\begin{tikzpicture}[baseline=5mm]
 \coordinate (0) at (0,0);
 \coordinate (p) at (0,0.8);
 \draw (-0.5,0) .. controls (-0.8,1.2) and (-0.3,1.5) .. (0,1.5);
 \draw (0.5,0) .. controls (0.8,1.2) and (0.3,1.5) ..node[above,pos=0.8]{$\ell$} (0,1.5);
 \draw (p)--node[left,pos=0.4]{$p_1$}(-1.3,0)node[left]{$\de_1$};
 \draw (p)--node[right,pos=0.4]{$p_m$}(1.3,0)node[right]{$\de_m$};
 \draw (p)--node[left,pos=0.35]{$p_2$}(-1.3,1.6)node[left]{$\de_2$};
 \fill (p) circle (0.7mm);
 \draw[semithick,dotted] (0.3,0.75) arc (-20:140:0.3);
 \node[below] at(p) {$p$};
\end{tikzpicture}
   \caption{Local configuration around a puncture $p$}
   \label{locp}
\end{figure}
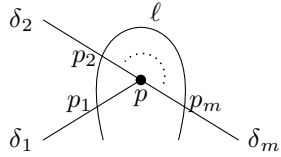

\noindent Moreover, $\de_1$ is different from $\de_m$. Indeed, if $\de_1 = \de_m$, considering triangles with a side $\de_1$, there is a tagged arc of $T$ incident to $p$ such that $\ell$ crosses it before $p_1$ or after $p_m$, a contradiction.

 The contributions to $b_{\de_i,T}(\ell)$ at $\de_i \cap \ell$ except at $p_i$ coincide with the contributions to $b_{\de_i,T}(\ell_{\sf p})$ and $b_{\de_i,T}(\ell_{\sf q})$ at them. We denote by $c_i(\ell)$ (resp, $c_i(\ell_{\sf p})$, $c_i(\ell_{\sf q})$) the contribution to $b_{\de_i,T}(\ell)$ (resp., $b_{\de_i,T}(\ell_{\sf p})$, $b_{\de_i,T}(\ell_{\sf q})$) at $p_i$ for $i \in \{1,\ldots,m\}$. To prove \eqref{eqpq}, we only need to show
\begin{equation}\label{ccc}
 c_i(\ell)=c_i(\ell_{\sf p})+c_i(\ell_{\sf q}).
\end{equation}

 First, we assume that neither $(\de_1,\de_2)$ nor $(\de_{m-1},\de_m)$ form a pair of conjugate arcs. Then it is easy to give the following values:
\[
 c_i(\ell) =  \left\{
     \begin{array}{ll}
   -1 & \text{if $i=1$},\\
   1 & \text{if $i=m$},\\
   0 & \text{if $i\neq 1, m$},
     \end{array} \right.\hspace{3mm}
 c_i(\ell_{\sf p}) =  \left\{
     \begin{array}{ll}
   -1 & \text{if $i=1$},\\
   0 & \text{if $i\neq 1$},
     \end{array} \right.\hspace{3mm}
 c_i(\ell_{\sf q}) =  \left\{
     \begin{array}{ll}
   1 & \text{if $i=m$},\\
   0 & \text{if $i\neq m$}.
     \end{array} \right.
\]
 Therefore, \eqref{ccc} holds.

 Second, we assume that $(\de_1,\de_2)$ is a pair of conjugate arcs tagged in the different ways at $p$, in which case $m=2$. Then by exchanging $\de_1$ and $\de_2$ if necessary (see Figure \ref{de1de2}), we have
\begin{equation}\label{conjci=}
 c_i(\ell_{\sf p}) =  \left\{
     \begin{array}{ll}
   c_1(\ell) & \text{if $i=1$},\\
   0 & \text{if $i=2$},
     \end{array} \right.\hspace{3mm}
 c_i(\ell_{\sf q}) =  \left\{
     \begin{array}{ll}
   0 & \text{if $i=1$},\\
   c_2(\ell) & \text{if $i=2$}.
     \end{array} \right.
\end{equation}
 Therefore, \eqref{ccc} holds.

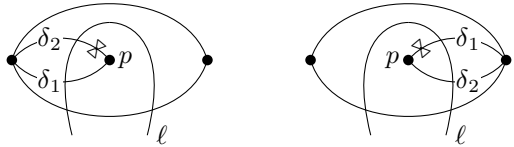
\begin{figure}[htp]
\centering
\begin{tikzpicture}[baseline=5mm]
 \coordinate (0) at (0,0);
 \coordinate (p) at (0,1);
 \coordinate (l) at (-1.3,1);
 \coordinate (r) at (1.3,1);
 \draw (l) .. controls (-1,2) and (1,2) .. (r);
 \draw (l) .. controls (-1,0) and (1,0) .. (r);
 \draw (l) to [out=50,in=130,relative] node[pos=0.85]{\rotatebox{50}{\footnotesize $\bowtie$}}  node[fill=white,inner sep=0.1,pos=0.4]{$\de_2$} (p);
 \draw (l) to [out=-50,in=-130,relative] node[fill=white,inner sep=0.1,pos=0.4]{$\de_1$} (p);
 \draw (-0.5,0) .. controls (-0.8,1.2) and (-0.3,1.5) .. (0,1.5);
 \draw (0.5,0) .. controls (0.8,1.2) and (0.3,1.5) ..node[right,pos=0]{$\ell$} (0,1.5);
 \fill (p) circle (0.7mm); \fill (l) circle (0.7mm); \fill (r) circle (0.7mm);
 \node[right] at(p) {$p$};
\end{tikzpicture}
   \hspace{10mm}
\begin{tikzpicture}[baseline=5mm]
 \coordinate (0) at (0,0);
 \coordinate (p) at (0,1);
 \coordinate (l) at (-1.3,1);
 \coordinate (r) at (1.3,1);
 \draw (l) .. controls (-1,2) and (1,2) .. (r);
 \draw (l) .. controls (-1,0) and (1,0) .. (r);
 \draw (r) to [out=50,in=130,relative]  node[fill=white,inner sep=0.1,pos=0.4]{$\de_2$} (p);
 \draw (r) to [out=-50,in=-130,relative] node[pos=0.85]{\rotatebox{-50}{\footnotesize $\bowtie$}} node[fill=white,inner sep=0.1,pos=0.4]{$\de_1$} (p);
 \draw (-0.5,0) .. controls (-0.8,1.2) and (-0.3,1.5) .. (0,1.5);
 \draw (0.5,0) .. controls (0.8,1.2) and (0.3,1.5) ..node[right,pos=0]{$\ell$} (0,1.5);
 \fill (p) circle (0.7mm); \fill (l) circle (0.7mm); \fill (r) circle (0.7mm);
 \node[left] at(p) {$p$};
\end{tikzpicture}
   \caption{Pairs $(\de_1,\de_2)$ of conjugate arcs tagged in the different ways at $p$ such that \eqref{conjci=} holds}
   \label{de1de2}
\end{figure}

 Finally, we assume that $(\de_1,\de_2)$ is a pair of conjugate arcs tagged in the same way at $p$. We define a set $\mathbb{M}$ as follows: If $(\de_{m-1},\de_m)$ is a pair of conjugate arcs, then $\mathbb{M}=\{m-1,m\}$; Otherwise, $\mathbb{M}=\{m\}$. Then we have $c_i(\ell_{\sf p}) = 0 = c_j(\ell_{\sf q})$ for $i \notin \{1,2\}$ and $j \notin \mathbb{M}$, and
\[
 c_i(\ell) =  \left\{
     \begin{array}{ll}
   c_i(\ell_{\sf p}) & \text{if $i\in\{1,2\}$},\\
   c_i(\ell_{\sf q}) & \text{if $i \in \mathbb{M}$},\\
   0 & \text{otherwise}.
     \end{array} \right.
\]
 Therefore, \eqref{ccc} holds. Moreover, it follows from the symmetry for the case that $(\de_{m-1},\de_m)$ is a pair of conjugate arcs tagged in the same way at $p$. Consequently, \eqref{eqpq} holds for any $\de \in T$.
\end{proof}

 For a lamination $L$ on $(S,M)$, we have decompositions
\begin{equation}\label{decomp}
 L = L_{\rm el} \sqcup L_{\rm ex} \sqcup L_{\rm cl} = L_{\rm nc} \sqcup L_{\rm cl},
\end{equation}
where $L_{\rm el}$ (resp., $L_{\rm ex}$, $L_{\rm cl}$) consists of all elementary (resp., exceptional, closed) laminates in $L$. For multi-sets $L$ and $L'$ of laminates of $(S,M)$, we define non-multi-sets
\[
 \e^{-1}(L) := \{\de \text{ : a tagged arc}\mid \e(\de) \in L\}\ \text{ and }\ L \setminus L' := \{\ell \in L \mid \ell \notin L'\}.
\]
The following properties are used to prove Theorem \ref{main} in Subsection \ref{subsecmain}.

\begin{proposition}\label{tagpar}
 Let $L$ be a lamination on $(S,M)$ with $L_{\rm cl} = \emptyset$. Then the following properties hold:
\begin{itemize}
 \item[(1)] $C_T(L) \subseteq C_T(L_{\sf pq})$.
 \item[(2)] $\e^{-1}(L_{\sf pq})$ is a partial tagged triangulation of $(S,M)$.
\end{itemize}
 Moreover, we take a set $U$ of tagged arcs of $(S,M)$ such that $T' = \e^{-1}(L_{\sf pq}) \sqcup U$ is a tagged triangulation. Then we have the equality
\begin{itemize}
 \item[(3)] $C_T(\e T') = C_T(L_{\sf pq} \sqcup \e U)$.
\end{itemize}
\end{proposition}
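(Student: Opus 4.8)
The plan is to read off all three parts from Lemma~\ref{vecsep}, Proposition~\ref{bij}, and a direct inspection of the curves $\ell_{\sf p},\ell_{\sf q}$ attached to an exceptional laminate. Since $L_{\rm cl}=\emptyset$ we have $L=L_{\rm el}\sqcup L_{\rm ex}$, so $L_{\sf pq}=L_{\rm el}\sqcup\bigsqcup_{\ell\in L_{\rm ex}}\{\ell_{\sf p},\ell_{\sf q}\}$ is a multi-set of elementary laminates. For (1), recall $C_T(L)$ is spanned by the vectors $b_T(\ell)$, $\ell\in L$. If $\ell\in L_{\rm el}$ then $\ell\in L_{\sf pq}$, so $b_T(\ell)$ is a generator of $C_T(L_{\sf pq})$; if $\ell\in L_{\rm ex}$ then $b_T(\ell)=b_T(\ell_{\sf p})+b_T(\ell_{\sf q})$ by Lemma~\ref{vecsep} with $\ell_{\sf p},\ell_{\sf q}\in L_{\sf pq}$, so $b_T(\ell)\in C_T(L_{\sf pq})$. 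Hence every generator of $C_T(L)$ lies in the convex cone $C_T(L_{\sf pq})$, which gives $C_T(L)\subseteq C_T(L_{\sf pq})$.

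For (2), I would first establish the purely topological claim that the underlying set of $L_{\sf pq}$ is a lamination (of elementary laminates), by a case check over the two types in Proposition~\ref{excep}. The relevant geometric facts are: (i) $\ell_{\sf p}$ and $\ell_{\sf q}$ are disjoint and each can be drawn inside an arbitrarily small neighbourhood of $\ell$; and (ii) if an exceptional $\ell\in L$ encloses the puncture $p$, then, $L$ being a lamination, no other laminate of $L$ enters the once-punctured disk cut out by $\ell$, and in particular no other laminate of $L$ spirals at $p$ — so the spirals at $p$ newly created in $\ell_{\sf p},\ell_{\sf q}$ cannot cross anything. Granting this, let $\de\ne\de'$ lie in $\e^{-1}(L_{\sf pq})$. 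If $\de^{\circ}\ne\de'^{\circ}$, then $\e(\de)$ and $\e(\de')$ are non-crossing, so $\de$ and $\de'$ are compatible by Proposition~\ref{bij}(1). If $\de^{\circ}=\de'^{\circ}$, then by (ii) the shared underlying arc cannot meet an elementary laminate of $L$ at an enclosed puncture, so this case forces $\{\e(\de),\e(\de')\}=\{\ell_{\sf p},\ell_{\sf q}\}$ for some exceptional $\ell\in L$; since $(\e^{-1}(\ell_{\sf p}),\e^{-1}(\ell_{\sf q}))$ is a pair of conjugate arcs, $\de$ and $\de'$ are compatible. Therefore $\e^{-1}(L_{\sf pq})$ is a collection of pairwise compatible tagged arcs, i.e.\ a partial tagged triangulation.

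For (3), the point is that $C_T(-)$ of a multi-set of laminates depends only on its underlying set, so it suffices to show that $\e T'$ and $L_{\sf pq}\sqcup\e U$ have the same underlying set. Since $L_{\sf pq}$ consists only of elementary laminates, $\e$ restricts to a bijection from $\e^{-1}(L_{\sf pq})$ onto the underlying set of $L_{\sf pq}$; hence $\e T'=\e(\e^{-1}(L_{\sf pq}))\sqcup\e U$ has underlying set (underlying set of $L_{\sf pq}$) $\sqcup\,\e U$. Because $U\cap\e^{-1}(L_{\sf pq})=\emptyset$ and $\e$ is injective, $\e U$ is disjoint from $L_{\sf pq}$, so this is exactly the underlying set of $L_{\sf pq}\sqcup\e U$. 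Thus $C_T(\e T')=C_T(L_{\sf pq}\sqcup\e U)$.

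The hard part will be the topological bookkeeping hidden in (2): one has to go through Proposition~\ref{excep} and check that performing, simultaneously, the surgery $\ell\rightsquigarrow\{\ell_{\sf p},\ell_{\sf q}\}$ on every exceptional member of a lamination never creates a new intersection, the delicate point being the control of the spirals newly introduced around the enclosed punctures (and the interaction of two exceptional laminates enclosing the same puncture, which one must show are isotopic). Parts (1) and (3) are essentially formal once Lemma~\ref{vecsep} and the identity $\e\circ\e^{-1}=\mathrm{id}$ on elementary laminates are in hand.
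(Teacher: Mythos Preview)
Parts (1) and (3) are correct and essentially identical to the paper's argument. In (2), however, your intermediate claim is false: you assert in (i) that $\ell_{\sf p}$ and $\ell_{\sf q}$ are disjoint, and more generally that the underlying set of $L_{\sf pq}$ is a lamination. But $(\e^{-1}(\ell_{\sf p}),\e^{-1}(\ell_{\sf q}))$ is a pair of conjugate arcs, so these two tagged arcs share the same underlying arc and differ only in the tag at the enclosed puncture $p$; consequently $\ell_{\sf p}$ spirals clockwise at $p$ while $\ell_{\sf q}$ spirals counterclockwise, and they cross infinitely often near $p$. This is exactly why Proposition~\ref{bij}(2) carries the hypothesis ``without pairs of conjugate arcs''. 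So $L_{\sf pq}$ is never a lamination when $L_{\rm ex}\neq\emptyset$, and the plan ``show $L_{\sf pq}$ is a lamination, then read off (2) from Proposition~\ref{bij}'' cannot work as stated.

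Your case split on $\de^{\circ}=\de'^{\circ}$ versus $\de^{\circ}\neq\de'^{\circ}$ is still the right skeleton; the fix is that in the case $\de^{\circ}\neq\de'^{\circ}$ you must argue directly that $\e(\de)$ and $\e(\de')$ are disjoint, rather than appealing to the false global statement. The paper organises (2) slightly differently, decomposing $\e^{-1}(L_{\sf pq})=\e^{-1}(L_{\rm el}\setminus(L_{\rm ex})_{\sf pq})\sqcup\e^{-1}((L_{\rm ex})_{\sf pq})$: the first piece is a partial tagged triangulation by Proposition~\ref{bij}(2); each $\ell_{\sf p},\ell_{\sf q}$ lies in a neighbourhood of $\ell\in L$ and is therefore compatible with every laminate in $L_{\rm el}\setminus(L_{\rm ex})_{\sf pq}$, so Proposition~\ref{bij}(1) gives cross-compatibility of the tagged arcs; and compatibility within $\e^{-1}((L_{\rm ex})_{\sf pq})$ uses that each $(\e^{-1}(\ell_{\sf p}),\e^{-1}(\ell_{\sf q}))$ is a conjugate pair. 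The essential point in both approaches is that compatibility of $\e^{-1}(\ell_{\sf p})$ with $\e^{-1}(\ell_{\sf q})$ is supplied by the conjugate-pair relation, not by disjointness of the laminates.
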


\begin{proof}
 (1) The assertion immediately follows from Lemma \ref{vecsep}.

 (2) By Proposition \ref{bij}(2), $\e^{-1}(L_{\rm el})$ is a partial tagged triangulation of $(S,M)$. Since $L$ is a lamination, any laminate in $(L_{\rm ex})_{\sf pq}$ is compatible with all laminates in $L_{\rm el} \setminus (L_{\rm ex})_{\sf pq}$. Then, by Proposition \ref{bij}(1), any tagged arc of $\e^{-1}((L_{\rm ex})_{\sf pq})$ is compatible with all tagged arcs of $\e^{-1}(L_{\rm el} \setminus (L_{\rm ex})_{\sf pq})$. Moreover, $\e^{-1}((L_{\rm ex})_{\sf pq})$ is a partial tagged triangulation since $(\e^{-1}(\ell_{\sf p}),\e^{-1}(\ell_{\sf q}))$ is a pair of conjugate arcs for $\ell \in L_{\rm ex}$. Therefore,
\[
 \e^{-1}(L_{\sf pq}) = \e^{-1}(L_{\rm el} \setminus (L_{\rm ex})_{\sf pq}) \sqcup \e^{-1}((L_{\rm ex})_{\sf pq})
\]
 is a partial tagged triangulation of $(S,M)$.

 (3) Since $L_{\sf pq}$ coincides with the multiplicity of $\e\e^{-1}(L_{\sf pq})$, we have the equalities
\[
C_T(L_{\sf pq} \sqcup \e U) = C_T(\e\e^{-1}(L_{\sf pq}) \sqcup \e U) = C_T(\e T').
\qedhere
\].
\end{proof}

\subsection{Shear coordinates and Dehn twists}

 We consider the Dehn twist along a closed laminate and its effect on shear coordinates. In this subsection, we fix an ideal or tagged triangulation $T$, a closed laminate $\ell_c$ of $(S,M)$ and its direction. We denote by $\mathsf{T}_{\ell_c}$ the Dehn twist of $(S,M)$ along $\ell_c$ defined from the direction of $\ell_c$ as follows:
\[
\begin{tikzpicture}[baseline=0mm]
 \coordinate (0) at (0,0);
 \draw (0,1)--(3,1) (0,-1)--(3,-1); \draw[blue] (0.4,0)--(3.4,0);
 \draw[red] (1.5,1) arc [start angle = 90, end angle = -90, x radius=4mm, y radius=10mm]  node[pos=0.6]{\rotatebox{85}{$>$}};
 \draw[red,dotted] (1.5,1) arc [start angle = 90, end angle = 270, x radius=4mm, y radius=10mm]  node[pos=0.4]{\rotatebox{-95}{$>$}};
 \draw (0) circle [x radius=4mm, y radius=10mm];
 \draw (3,1) arc [start angle = 90, end angle = -90, x radius=4mm, y radius=10mm];
 \draw[dotted] (3,1) arc [start angle = 90, end angle = 270, x radius=4mm, y radius=10mm];
 \node[red] at(2,0.7) {$\ell_c$};
\end{tikzpicture}
 \hspace{7mm} \xrightarrow{\mathsf{T}_{\ell_c}} \hspace{7mm}
\begin{tikzpicture}[baseline=0mm]
 \coordinate (0) at (0,0);
 \draw (0,1)--(3,1) (0,-1)--(3,-1);
 \draw[blue] (0.4,0) .. controls (1.2,0) and (1.2,1) .. (1.4,1);
 \draw[blue] (1.6,-1) .. controls (1.8,-1) and (1.8,0) .. (3.4,0);
 \draw[blue,dotted] (1.4,1) .. controls (1.6,1) and (1.2,-0.8) .. (1.6,-1);
 \draw (0) circle [x radius=4mm, y radius=10mm];
 \draw (3,1) arc [start angle = 90, end angle = -90, x radius=4mm, y radius=10mm];
 \draw[dotted] (3,1) arc [start angle = 90, end angle = 270, x radius=4mm, y radius=10mm];
\end{tikzpicture}
\]

 The aim of this subsection is to prove the following.

\begin{theorem}\label{Dehn}
 Let $\ell_c$ be a closed laminate and $\ell$ a laminate of $(S,M)$ intersecting with $\ell_c$, and let $\de \in T$. Then there is $m' \in \bZ_{\ge 0}$ such that for any $m \ge m'$, we have
\[
 b_{\de,T}(\mathsf{T}_{\ell_c}^m(\ell)) = b_{\de,T}(\mathsf{T}_{\ell_c}^{m'}(\ell)) + (m-m') \#(\ell \cap \ell_c) b_{\de,T}(\ell_c).
\]
\end{theorem}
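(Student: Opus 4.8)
My plan is to argue geometrically: after reducing to the simplest kind of triangulation, I would show that once $\ell$ has been wound enough times around $\ell_c$, each further application of $\mathsf{T}_{\ell_c}$ simply inserts one more parallel copy of $\ell_c$ at each of the $\#(\ell\cap\ell_c)$ places where $\ell$ crosses $\ell_c$, and that inserting such a copy changes $b_{\de,T}$ by exactly $b_{\de,T}(\ell_c)$.

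\emph{Reductions.} I would take $\mathsf{T}_{\ell_c}$ to be supported in a thin annular neighbourhood $A$ of $\ell_c$. Since $A$ contains no marked point, $\mathsf{T}_{\ell_c}$ commutes with the operation $\ell\mapsto\ell^{(p)}$ of reversing a spiral at a puncture $p$ and with the operation $T\mapsto T^{(p_1\cdots p_m)}$ of changing tags at punctures. By the very definitions of $b_{\de,T}$ for a tagged triangulation and of $b_{\g,T}$ for $\g$ inside a self-folded triangle, the identity for a general tagged $T$ then follows from the case where $T$ is an ideal triangulation and $\de\in T$ is not inside a self-folded triangle. In that case $b_{\de,T}(-)$ is the sum of the local contributions $(+1,-1,0)$ of Figure \ref{SZ}, one for each point of intersection with $\de$, read inside the quadrilateral $Q_\de$ formed by the two triangles of $T$ adjacent to $\de$. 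Since these contributions are \emph{local} — they depend only on how a segment of the curve traverses $Q_\de$ — the contribution to $b_{\de,T}$ of any sub-arc of a curve that closes up into a loop isotopic (in $S$) to $\ell_c$ is exactly $b_{\de,T}(\ell_c)$.

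\emph{Geometric normal form and induction on $m$.} Put $\ell$, $\ell_c$ and $T$ in pairwise minimal position with $A$ thin enough that $\ell\cap A$ is a disjoint union of $k:=\#(\ell\cap\ell_c)$ arcs each crossing the core $\ell_c$ once, plus arcs disjoint from $\ell_c$ (unaffected by the twist). For every $m$, $\mathsf{T}_{\ell_c}^m(\ell)$ agrees with $\ell$ outside $A$ and, inside $A$, replaces each of the $k$ crossing arcs by an arc that enters $\partial A$, runs $m$ times around $A$ parallel to $\ell_c$, and leaves $\partial A$. I would then argue that there is $m'\in\bZ_{\ge0}$ so that for all $m\ge m'$ the minimal-position representative of $\mathsf{T}_{\ell_c}^m(\ell)$ already winds at least once around $\ell_c$ at each of its $k$ passages through $A$ — using the bigon criterion, a bigon between $\mathsf{T}_{\ell_c}^m(\ell)$ and a fixed arc of $T$ would, for $m$ large, force a bigon between $\ell$ (or $\ell_c$) and that arc. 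For such $m$, passing from $\mathsf{T}_{\ell_c}^m(\ell)$ to $\mathsf{T}_{\ell_c}^{m+1}(\ell)$ inserts exactly one further parallel loop of $\ell_c$ at each of the $k$ passages and leaves everything else in minimal position, so by the locality observation above
\[
 b_{\de,T}(\mathsf{T}_{\ell_c}^{m+1}(\ell)) = b_{\de,T}(\mathsf{T}_{\ell_c}^{m}(\ell)) + k\, b_{\de,T}(\ell_c).
\]
Iterating this from $m'$ to an arbitrary $m\ge m'$ gives the claimed formula.

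\emph{Main obstacle.} The delicate point is the minimal-position bookkeeping in the inductive step: verifying, uniformly in $m\ge m'$, that the wound curve $\mathsf{T}_{\ell_c}^m(\ell)$ realizes its geometric intersection number with every arc of $T$, and that inserting one more loop of $\ell_c$ neither creates nor destroys bigons with any $\de\in T$. This is where the hypotheses "$\ell$ intersects $\ell_c$" and "$m$ large" are really used. By contrast, the reductions to the ideal non-self-folded case are formal, the shape of $\mathsf{T}_{\ell_c}^m(\ell)$ is just the definition of the Dehn twist, and the equality "one parallel loop contributes $b_{\de,T}(\ell_c)$" is immediate from the local description of shear coordinates.
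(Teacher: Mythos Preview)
Your overall architecture matches the paper's: reduce tagged triangulations to ideal ones (using that $\mathsf{T}_{\ell_c}$ commutes with $(-)^{(p)}$ and with tag changes, since $\ell_c$ avoids punctures), and then prove an inductive step of the form $b_{\de,T}(\mathsf{T}_{\ell_c}^{m+1}(\ell))=b_{\de,T}(\mathsf{T}_{\ell_c}^m(\ell))+k\,b_{\de,T}(\ell_c)$ for $m$ large. Where you diverge is in how you justify that inductive step. You argue topologically: pick a thin annulus $A$, invoke the bigon criterion to claim that for $m\gg 0$ the naively twisted curve is already in minimal position with every arc of $T$, and then appeal to locality of the $\pm 1$ contributions to say that each inserted parallel copy of $\ell_c$ contributes exactly $b_{\de,T}(\ell_c)$. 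The paper instead builds an explicit auxiliary annulus $(S_{\ell_c},M_{\ell_c})$ with triangulation $T_{\ell_c}$ out of the triangles of $T$ crossed by $\ell_c$, cuts $\ell$ into pieces $\ell_q$ (one per $q\in\ell\cap\ell_c$), and introduces a purely combinatorial normal form (``ascending order'') for these pieces. Proposition~\ref{annu} shows that each $\ell_q$ is eventually in ascending order and computes the effect of one twist in that regime; Proposition~\ref{Dehn?} then verifies, by a short case analysis on whether $\de$ meets $\ell_c$ and whether $\de$ lies in a self-folded triangle, that the inductive step holds once all $\ell_q$ are in ascending order.

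What each approach buys: the paper's auxiliary-annulus construction replaces your minimal-position bookkeeping with a concrete finite check, so the threshold $m'=\max_q m_q$ and the inductive identity are obtained without ever invoking the bigon criterion; it also yields a uniform $m'$ independent of $\de$ for free. Your route is shorter to state and conceptually transparent, but the step you yourself flag as the ``main obstacle'' --- that for all $m\ge m'$ the naively twisted curve realises geometric intersection with every arc of $T$, and that one further twist neither creates nor destroys bigons --- is exactly what the paper's ascending-order machinery is designed to circumvent. Your sketch of that step (``a bigon for large $m$ would force a bigon for $\ell$ or $\ell_c$'') is plausible and in the spirit of standard Dehn-twist arguments, but it is not a proof as written; making it rigorous would take roughly the same amount of work as the paper's explicit construction.
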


 First, we assume that $(S,M)$ is an annulus without punctures and $T$ is its ideal triangulation consisting of arcs $\tau_1,\ldots,\tau_r$ crossing $\ell_c$ in order of occurrence along $\ell_c$ (we can have $\tau_{i}=\tau_j$ even if $i \neq j$), that is,
\[
 T=
\begin{tikzpicture}[baseline=-0.5mm]
 \coordinate (lu) at (-2,1); \coordinate (ru) at (2,1); \coordinate (ld) at (-2,-1); \coordinate (rd) at (2,-1);
 \draw (lu)--(ru)--node[right]{$\tau_1$}node[pos=0.5]{\rotatebox{90}{$\gg$}}(rd)--(ld)--node[left]{$\tau_1$}node[pos=0.5]{\rotatebox{90}{$\gg$}}(lu);
 \draw (ld)--node[fill=white,inner sep=2,pos=0.6]{$\tau_2$}(-0.8,1); \draw (ru)--node[fill=white,inner sep=2,pos=0.4]{$\tau_r$}(0.8,-1);
 \node at(0.1,0.2) {$\cdots$};
 \draw[blue] (-2,-0.5)--node[fill=white,inner sep=2]{$\ell_c$}node[pos=0.3]{$>$}node[pos=0.7]{$>$}(2,-0.5);
\end{tikzpicture},
\]
 where two vertical lines $\tau_1$ are identified. Any elementary laminate $\ell$ of $(S,M)$ intersects with $\ell_c$ at most once since they intersect in a minimal number of points. We assume that $\ell$ intersects with $\ell_c$. We define the direction of $\ell$ as crossing $\ell_c$ from left to right:
\[
 T=
\begin{tikzpicture}[baseline=-0.5mm]
 \coordinate (lu) at (-2,1); \coordinate (ru) at (2,1); \coordinate (ld) at (-2,-1); \coordinate (rd) at (2,-1);
 \draw (lu)--(ru)--node[pos=0.5]{\rotatebox{90}{$\gg$}}(rd)--(ld)--node[pos=0.5]{\rotatebox{90}{$\gg$}}(lu);
 \draw (-2,-0.5)--node[fill=white,inner sep=2,pos=0.4]{$\ell_c$}node[pos=0.2]{$>$}node[pos=0.8]{$>$}(2,-0.5);
 \draw[blue] (-1,1)--node[pos=0.5]{\rotatebox{-45}{$>$}}node[fill=white,inner sep=1,pos=0.3]{$\ell$}(1,-1);
\end{tikzpicture}
\]
 Let $s \in \{1,\ldots,r\}$ such that the starting point of $\ell$ is on the triangle of $T$ with sides $\tau_{s-1}$ and $\tau_s$, where $\tau_{r+i}=\tau_i$. In particular, $\ell$ intersects at least one of $\tau_{s-1}$ and $\tau_s$. Thus $\ell$ intersects with the $t_{\ell}$ ($\in \bZ_{\ge 1}$) diagonals either $\tau_s,\tau_{s+1},\ldots,\tau_{s+t_{\ell}-1}$ or $\tau_{s-1},\tau_{s-2},\ldots,\tau_{s-t_{\ell}}$ of $T$ in order. In the former (resp., latter) case, we say that $\ell$ {\it intersects with $T$ in ascending} (resp., {\it descending}) {\it order}:
\[
\begin{tikzpicture}[baseline=0mm,scale=0.8]
 \coordinate (lu) at (-2,1); \coordinate (ru) at (2,1); \coordinate (ld) at (-2,-1); \coordinate (rd) at (2,-1);
 \draw (lu)--(ru) (rd)--(ld);
 \draw (-1.5,1)--node[fill=white,inner sep=2,pos=0.7]{$\tau_{s-1}$}(0,-1);
 \draw (1.5,1)--node[fill=white,inner sep=2,pos=0.7]{$\tau_s$}(0,-1);
 \draw[blue] (0,1) .. controls (0,0) ..node[pos=0.9]{$>$}node[fill=white,inner sep=1,pos=0.3]{$\ell$}(2,0);
 \node at(-3.3,0.3){ascending}; \node at(-3.3,-0.3){order};
\end{tikzpicture}
 \hspace{10mm}
\begin{tikzpicture}[baseline=0mm,scale=0.8]
 \coordinate (lu) at (-2,1); \coordinate (ru) at (2,1); \coordinate (ld) at (-2,-1); \coordinate (rd) at (2,-1);
 \draw (lu)--(ru) (rd)--(ld);
 \draw (-1.5,1)--node[fill=white,inner sep=2,pos=0.7]{$\tau_{s-1}$}(0,-1);
 \draw (1.5,1)--node[fill=white,inner sep=2,pos=0.7]{$\tau_s$}(0,-1);
 \draw[blue] (0,1) .. controls (0,0) ..node[pos=0.9]{$<$}node[fill=white,inner sep=1,pos=0.3]{$\ell$}(-2,0);
 \node at(-3.3,0.3){descending}; \node at(-3.3,-0.3){order};
\end{tikzpicture}
\]

\begin{proposition}\label{annu}
 Let $(S,M)$ be an annulus without punctures and $\ell$ an elementary laminate of $(S,M)$ intersecting with $\ell_c$.
\begin{itemize}
 \item[(1)] If $\ell$ intersects with $T$ in ascending order, then so is $\mathsf{T}_{\ell_c}(\ell)$ and for $\de \in T$, we have
\[
 b_{\de,T}(\mathsf{T}_{\ell_c}(\ell)) = b_{\de,T}(\ell) + b_{\de,T}(\ell_c).
\]
 If $\ell$ and $\mathsf{T}_{\ell_c}(\ell)$ intersect with $T$ in descending order, then for $\de \in T$, we have
\[
 b_{\de,T}(\mathsf{T}_{\ell_c}(\ell)) = b_{\de,T}(\ell) - b_{\de,T}(\ell_c).
\]
 \item[(2)] There is $m \gg 0$ such that $\mathsf{T}_{\ell_c}^m(\ell)$ intersects with $T$ in ascending order.
\end{itemize}
\end{proposition}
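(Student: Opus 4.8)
The plan is to pass to the universal cover of the annulus, where the Dehn twist becomes an explicit shear and $\ell$ becomes a single arc meeting the lift of $\ell_c$ exactly once. Concretely, realize the universal cover as a strip $\Sigma=\bR\times[0,1]$ with deck transformation $g(x,y)=(x+L,y)$ for a suitable period $L>0$, let $\ell_c$ lift to the line $\bR\times\{1/2\}$ oriented towards increasing $x$ (this pins down the chosen direction of $\ell_c$), and lift $T$ to a $g$-periodic triangulation with arcs $\tilde\tau_i$ ($i\in\bZ$, $\tilde\tau_{i+r}=g\tilde\tau_i$), indexed so that they meet $\bR\times\{1/2\}$ in the order of increasing $x$. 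Up to isotopy, $\mathsf{T}_{\ell_c}$ lifts to $\Phi(x,y)=(x+\varphi(y)L,\,y)$, where $\varphi\colon[0,1]\to[0,1]$ is non-decreasing, equal to $0$ on $[0,1/2-\varepsilon]$ and to $1$ on $[1/2+\varepsilon,1]$, the sign being forced by the chosen directions of $\ell_c$ and of crossings. Since $\ell$ meets $\ell_c$ once, a lift $\tilde\ell$ runs from $\bR\times\{0\}$ to $\bR\times\{1\}$ crossing $\bR\times\{1/2\}$ once; hence $\Phi(\tilde\ell)$ agrees with $\tilde\ell$ below $\ell_c$, with $g(\tilde\ell)$ above $\ell_c$, and in between makes a single ``transition'' running parallel to $\ell_c$ over exactly one period.

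From this picture I would read off the crossing sequence of $\mathsf{T}_{\ell_c}(\ell)$. Suppose $\ell$ is ascending, crossing $\tau_s,\dots,\tau_{s+t_\ell-1}$, with the $\ell_c$-crossing lying in a triangle bordered by consecutive arcs $\tilde\tau_k,\tilde\tau_{k+1}$. Then below $\ell_c$ the image $\Phi(\tilde\ell)$ crosses the same arcs as $\ell$ did there; the transition crosses the full period $\tilde\tau_{k+1},\dots,\tilde\tau_{k+r}$; and above $\ell_c$ it crosses the old arcs with indices shifted by $r$. After isotoping into minimal position one checks no bigons appear, so $\mathsf{T}_{\ell_c}(\ell)$ is again ascending with $t_{\mathsf{T}_{\ell_c}(\ell)}=t_\ell+r$; dually, if $\ell$ and $\mathsf{T}_{\ell_c}(\ell)$ are both descending, the transition cancels one period's worth of crossings and $t_{\mathsf{T}_{\ell_c}(\ell)}=t_\ell-r$. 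For the shear-coordinate formula, fix $\de=\tau_j\in T$ and recall that each crossing contributes $\pm1$ by the local rule of Figure \ref{SZ}, which depends only on the pair of triangles the adjacent segment of the laminate traverses. In the ascending case the below- and above-$\ell_c$ crossings of $\mathsf{T}_{\ell_c}(\ell)$ with $\tau_j$ have, after the index shift, exactly the same local pictures as those of $\ell$, hence contribute $b_{\tau_j,T}(\ell)$ in total; the remaining $r$ crossings are those of the transition, which runs once around parallel to $\ell_c$ and so reproduces the local data of $\ell_c$ itself, contributing $b_{\tau_j,T}(\ell_c)$. This gives $b_{\tau_j,T}(\mathsf{T}_{\ell_c}(\ell))=b_{\tau_j,T}(\ell)+b_{\tau_j,T}(\ell_c)$, and the descending case is the identical computation with the transition erasing a parallel copy of $\ell_c$, producing the minus sign.

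Part (2) then follows formally. By the above, one application of $\mathsf{T}_{\ell_c}$ adds $r$ to $t_\ell$ when $\ell$ is ascending and subtracts $r$ when $\ell$ and its image are descending, while an ascending laminate stays ascending. Encoding $\ell$ by the signed quantity $+t_\ell$ in the ascending case and $-t_\ell$ in the descending case, each twist increases this quantity by $r>0$ (including the borderline case where a descending $\ell$ becomes ascending), so after finitely many twists it is positive and $\mathsf{T}_{\ell_c}^m(\ell)$ meets $T$ in ascending order.

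The main obstacle is the minimal-position verification underlying the crossing count of the second paragraph: one must confirm that $\Phi(\tilde\ell)$, possibly after an explicit isotopy, has no bigons with the arcs of $T$, so that its crossing sequence is precisely ``$\ell$'s crossings plus (resp.\ minus) one period'' and the $\pm1$ local data attached to the new crossings coincide with those of $\ell_c$. This is a finite check over the local configurations around $\ell_c$—the most delicate one being when the $\ell_c$-crossing of $\ell$ sits in a triangle having a boundary segment as a side—and it is exactly what necessitates the hypothesis that both $\ell$ and $\mathsf{T}_{\ell_c}(\ell)$ be descending in the second assertion of~(1).
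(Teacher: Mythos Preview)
Your proposal is correct and the overall logic matches the paper's, but the execution is organized differently. The paper works entirely inside the annulus with explicit local pictures: for part~(1) it splits into the cases $t_\ell>1$ (where the twist is visibly a local modification near the unique $\ell\cap\ell_c$ point, and the picture immediately shows both that the image is ascending and that exactly one extra copy of the $\ell_c$-crossings is inserted) and $t_\ell=1$ (handled by drawing $\ell$ and $\mathsf{T}_{\ell_c}(\ell)$ explicitly and computing the shear coordinates by hand). Part~(2) is then a two-line induction: if $\ell$ is descending with $t_\ell<r$ then $\mathsf{T}_{\ell_c}(\ell)$ is already ascending, while if $t_\ell\ge r$ then $\mathsf{T}_{\ell_c}(\ell)$ is descending with $t_{\mathsf{T}_{\ell_c}(\ell)}=t_\ell-r$.

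Your universal-cover approach trades those pictures for a single global description (the shear map $\Phi$ on the strip), which makes the statement ``the twist inserts/deletes one period's worth of crossings, each with the local data of $\ell_c$'' transparent and handles all $t_\ell$ uniformly. The cost, as you correctly flag, is that the minimal-position (no-bigon) check is pushed to the end and must be done carefully, particularly when the $\ell_c$-crossing of $\ell$ lies in a triangle with a boundary edge; in the paper this check is absorbed into the two small pictures. Both approaches are equally valid here; the paper's is shorter for this specific lemma, while yours would scale more cleanly if one wanted analogous statements for curves meeting $\ell_c$ several times or for more complicated twists.
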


\begin{proof}
 (1) We only prove the first assertion since the proof of the second assertion is similar. Suppose that $\ell$ intersects with $T$ in ascending order. If $t_{\ell} >1 $, then the assertion holds since $\mathsf{T}_{\ell_c}$ only transforms $\ell$ around an intersection point of $\ell$ and $\ell_c$ as follows:
\[
\begin{tikzpicture}[baseline=-0.5mm]
 \coordinate (lu) at (-1.5,1); \coordinate (ru) at (2,1); \coordinate (ld) at (-2,-1); \coordinate (rd) at (1.5,-1);
 \draw[blue] (lu)--node[pos=0.85]{\rotatebox{-30}{$>$}}node[fill=white,inner sep=1,pos=0.15]{$\ell$}(rd);
 \draw[red] (-2,0)--node[fill=white,inner sep=2,pos=0.2]{$\ell_c$}node[pos=0.8]{$>$}(2,0);
 \fill[blue] (0,0) circle (0.7mm); \node[blue] at(0.05,0.2){};
 \draw (-0.6,1)--node[fill=white,inner sep=1,pos=0.8]{$\tau_s$}(-0.6,-1);
 \draw (0.6,1)--node[fill=white,inner sep=1,pos=0.2]{$\tau_{s+1}$}(0.6,-1);
 \draw (-1.7,1)--node[fill=white,inner sep=1,pos=0.8]{$\tau_{s-1}$}(-1.7,-1);
 \draw (1.7,1)--node[fill=white,inner sep=1,pos=0.2]{$\tau_{s+2}$}(1.7,-1);
\end{tikzpicture}
 \hspace{7mm} \xrightarrow{\mathsf{T}_{\ell_c}} \hspace{7mm}
\begin{tikzpicture}[baseline=-0.5mm]
 \coordinate (lu) at (-1.5,1); \coordinate (ru) at (2,1); \coordinate (ld) at (-2,-1); \coordinate (rd) at (1.5,-1);
 \draw[blue] (lu) .. controls (0,0) ..node[pos=0.85]{$>$}node[fill=white,inner sep=1,pos=0.1]{$\mathsf{T}_{\ell_c}(\ell)$}(2,0);
 \draw[blue] (-2,0) .. controls (0,0) ..node[pos=0.15]{$>$}node[pos=0.9]{\rotatebox{-30}{$>$}}(rd);
 \draw (-0.6,1)--node[fill=white,inner sep=1,pos=0.8]{$\tau_s$}(-0.6,-1);
 \draw (0.6,1)--node[fill=white,inner sep=1,pos=0.2]{$\tau_{s+1}$}(0.6,-1);
 \draw (-1.7,1)--node[fill=white,inner sep=1,pos=0.8]{$\tau_{s-1}$}(-1.7,-1);
 \draw (1.7,1)--node[fill=white,inner sep=1,pos=0.2]{$\tau_{s+2}$}(1.7,-1);
\end{tikzpicture}
\]
 If $t_{\ell} = 1$, then $\ell$ and $\mathsf{T}_{\ell_c}(\ell)$ are given as follows:
\[
\begin{tikzpicture}[baseline=0mm,scale=0.8]
 \coordinate (lu) at (-2,1); \coordinate (ru) at (2,1); \coordinate (ld) at (-2,-1); \coordinate (rd) at (2,-1);
 \draw (lu)--(ru) (rd)--(ld);
 \draw (-1.7,1)--node[fill=white,inner sep=2,pos=0.7]{$\tau_{s-1}$}(-1,-1);
 \draw (1,1)--node[fill=white,inner sep=2,pos=0.7]{$\tau_s$}(-1,-1);
 \draw (1,1)--node[fill=white,inner sep=2,pos=0.7]{$\tau_{s+1}$}(1.7,-1);
 \draw[blue] (-0.1,1)--node[pos=0.8]{\rotatebox{-80}{$>$}}node[fill=white,inner sep=1,pos=0.2]{$\ell$}(0.6,-1);
\end{tikzpicture}
 \hspace{7mm} \xrightarrow{\mathsf{T}_{\ell_c}} \hspace{7mm}
\begin{tikzpicture}[baseline=0mm,scale=0.8]
 \coordinate (lu) at (-2,1); \coordinate (ru) at (2,1); \coordinate (ld) at (-2,-1); \coordinate (rd) at (2,-1);
 \draw (lu)--(ru) (rd)--(ld);
 \draw (-1.7,1)--node[fill=white,inner sep=2,pos=0.7]{$\tau_{s-1}$}(-1,-1);
 \draw (1,1)--node[fill=white,inner sep=2,pos=0.7]{$\tau_s$}(-1,-1);
 \draw (1,1)--node[fill=white,inner sep=2,pos=0.7]{$\tau_{s+1}$}(1.7,-1);
 \draw[blue] (-0.1,1) .. controls (0.2,0.1) ..node[pos=0.75]{$>$}node[fill=white,inner sep=1,pos=0.2]{$\ell$}(2,0.1);
 \draw[blue] (0.6,-1) .. controls (0.2,0.1) ..node[pos=0.75]{$>$}(-2,0.1);
\end{tikzpicture}
\]
 Then the assertion is directly given by enumerating their shear coordinates.

 (2) Suppose that $\ell$ intersects with $T$ in descending order. If $t_{\ell} < r$, then $\mathsf{T}_{\ell_c}(\ell)$ intersects with $T$ in ascending order. If $t_{\ell} \ge r$, then $\mathsf{T}_{\ell_c}(\ell)$ intersects with $T$ in descending order and $t_{\mathsf{T}_{\ell_c}(\ell)}=t_{\ell}-r$. By the induction, the assertion holds.
\end{proof}

 Next, we consider an arbitrary marked surface $(S,M)$ and its ideal triangulation $T$. For $\ell_c$ in Theorem \ref{Dehn}, we construct an annulus $(S_{\ell_c},M_{\ell_c})$ and its triangulation $T_{\ell_c}$ as follows: Let $\tau_1,\ldots,\tau_r$ be the arcs of $T$ crossing $\ell_c$ in order of occurrence along $\ell_c$ ($\tau_{i}$ and $\tau_j$ can be the same even if $i \neq j$). Hence $\ell_c$ crosses $r$ triangles $\triangle_1,\ldots,\triangle_r$ in this order. For $i \in \{1,\ldots,r\}$, let $\triangle'_i$ be a copy of the triangle $\triangle_i$, hence $\triangle'_i$ has the sides $\tau_i$ and $\tau_{i+1}$. Then an annulus $(S_{\ell_c},M_{\ell_c})$ and its triangulation $T_{\ell_c}$ are obtained by gluing $\triangle'_1,\ldots,\triangle'_r$ along the edges $\tau_i$, that is,
\begin{equation}\label{Tellc}
 T_{\ell_c}=
\begin{tikzpicture}[baseline=-0.5mm]
 \coordinate (lu) at (-2,1); \coordinate (ru) at (2,1); \coordinate (ld) at (-2,-1); \coordinate (rd) at (2,-1);
 \draw (lu)--(ru)--node[right]{$\tau_1$}node[pos=0.5]{\rotatebox{90}{$\gg$}}(rd)--(ld)--node[left]{$\tau_1$}node[pos=0.5]{\rotatebox{90}{$\gg$}}(lu);
 \draw (ld)--node[right,pos=0.6]{$\tau_2$}(-0.8,1); \draw (ru)--node[left,pos=0.4]{$\tau_r$}(0.8,-1);
 \node at(0.1,0.2) {$\cdots$}; \node at(-1.6,0.4) {$\triangle'_1$}; \node at(1.65,-0.25) {$\triangle'_r$};
 \draw[blue] (-2,-0.5)--node[fill=white,inner sep=2]{$\ell_c$}node[pos=0.3]{$>$}node[pos=0.7]{$>$}(2,-0.5);
\end{tikzpicture}
\end{equation}
in $(S_{\ell_c},M_{\ell_c})$, where two vertical lines $\tau_1$ are identified. In particular, if $\tau_i$ is inside a self-folded triangle of $T$, then the corresponding triangles are given by
\[
\begin{tikzpicture}[baseline=-3mm]
 \coordinate (0) at (0,0);
 \coordinate (1) at (0,-1);
 \draw (0) node[left]{$p$} to node[right,pos=0.3]{$\tau_i$} (1);
 \draw (0,0.4) node[above]{$\tau_{i-1}=\tau_{i+1}$} ..controls(0.5,0.4)and(1,-0.2).. (1);
 \draw (0,0.4) ..controls(-0.5,0.4)and(-1,-0.2).. (1);
 \fill(0) circle (0.7mm); \fill (1) circle (0.7mm);
 \draw[blue] (-1.2,-0.6)--node[fill=white,inner sep=1,pos=0.15]{$\ell_c$}node[pos=0.85]{$>$}(1.2,-0.6);
\end{tikzpicture}
 \text{in $T$}
 \hspace{5mm} \longrightarrow \hspace{5mm}
\begin{tikzpicture}[baseline=-1mm]
 \coordinate (0) at (0,-1); \coordinate (l) at (-1,0.7); \coordinate (r) at (1,0.7); \coordinate (u) at (0,0.7);
 \draw (0)--node[left,pos=0.6]{$\tau_{i-1}$}(l)--(r)--node[right,pos=0.4]{$\tau_{i+1}$}(0)--node[fill=white,inner sep=2,pos=0.6]{$\tau_i$}(u);
 \draw[blue] (-1.2,-0.5)--node[fill=white,inner sep=1,pos=0.15]{$\ell_c$}node[pos=0.85]{$>$}(1.2,-0.5);
\end{tikzpicture}
\ \ \text{in $T_{\ell_c}$}.
\]
 For a laminate $\ell$ of $(S,M)$ intersecting with $\ell_c$, let $q \in \ell \cap \ell_c$ and $\ell_q$ a laminate of $(S_{\ell_c},M_{\ell_c})$ corresponding to the connected segment of $\ell$ in $T_{\ell_c}$ containing $q$ as follows:
\[
\begin{tikzpicture}[baseline=-1mm]
 \coordinate (lu) at (-2,0.8); \coordinate (ru) at (2,0.8); \coordinate (ld) at (-2,-0.8); \coordinate (rd) at (2,-0.8);
 \draw (lu)--(ru)--node[pos=0.5]{\rotatebox{90}{$\gg$}}(rd)--(ld)--node[pos=0.5]{\rotatebox{90}{$\gg$}}(lu);
 \draw[red] (-2,-0.5)--node[fill=white,inner sep=2,pos=0.4]{$\ell_c$}node[pos=0.8]{$>$}(2,-0.5);
 \draw (lu)--(-2,1.3); \draw (ru)--(2,1.3); \draw (ld)--(-2,-1.3); \draw (rd)--(2,-1.3);
 \draw[blue] (-1.8,1.3)--(-1,-1.3); \draw[blue] (1.6,1.3)--(0,-1.3);
 \draw[blue] (-1.5,1.3)..controls(-1,0)and(0.5,0)..(1.2,1.3)node[left]{$\ell$};
 \fill[blue](-1.25,-0.5) circle (0.7mm); \fill[blue](0.5,-0.5) circle (0.7mm);
 \node[blue] at(-1.05,-0.35) {$q$}; \node[blue] at(0.33,-0.33) {$q'$};
\end{tikzpicture}
\ \text{in $T$}
 \hspace{5mm} \longrightarrow \hspace{5mm}
\begin{tikzpicture}[baseline=-1mm]
 \coordinate (lu) at (-2,0.8); \coordinate (ru) at (2,0.8); \coordinate (ld) at (-2,-0.8); \coordinate (rd) at (2,-0.8);
 \draw (lu)--(ru)--node[pos=0.5]{\rotatebox{90}{$\gg$}}(rd)--(ld)--node[pos=0.5]{\rotatebox{90}{$\gg$}}(lu);
 \draw[red] (-2,-0.5)--node[fill=white,inner sep=2,pos=0.4]{$\ell_c$}node[pos=0.8]{$>$}(2,-0.5);
 \draw[blue] (-1.6,0.8)--node[fill=white,inner sep=2,pos=0.4]{$\ell_q$}(-1.2,-0.8); \draw[blue] (1.3,0.8)--node[fill=white,inner sep=2,pos=0.4]{$\ell_{q'}$}(0.3,-0.8);
 \fill[blue](-1.28,-0.5) circle (0.7mm); \fill[blue](0.5,-0.5) circle (0.7mm);
 \node[blue] at(-1.05,-0.35) {$q$}; \node[blue] at(0.33,-0.33) {$q'$};
\end{tikzpicture}
\ \text{in $T_{\ell_c}$}
\]

\begin{proposition}\label{Dehn?}
 Let $T$ be an ideal triangulation of $(S,M)$, $\g \in T$, and $\ell$ a laminate of $(S,M)$ intersecting with $\ell_c$. We assume that for all $q \in \ell \cap \ell_c$, $\ell_q$ intersects with $T_{\ell_c}$ in ascending order. Then we have
\[
 b_{\g,T}(\mathsf{T}_{\ell_c}(\ell)) = b_{\g,T}(\ell) + \#(\ell \cap \ell_c) b_{\g,T}(\ell_c).
\]
\end{proposition}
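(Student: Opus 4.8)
The plan is to localise the identity in a thin neighbourhood of $\ell_c$ and thereby reduce it to the annular situation already handled in Proposition \ref{annu}.

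\emph{Reductions.} I would first dispose of two easy cases. Suppose $\g$ lies inside a self-folded triangle of $T$, enclosed by a loop $\eta\in T$ around a puncture $p$. Since $\ell_c$ is a closed curve it cannot be isotoped into the once-punctured monogon bounded by $\eta$, so it stays off a disc neighbourhood of $p$; hence $\mathsf{T}_{\ell_c}$ commutes with the operation $(-)^{(p)}$ of reversing spirals at $p$, $\ell_c^{(p)}=\ell_c$, $\#(\ell\cap\ell_c)=\#(\ell^{(p)}\cap\ell_c)$, and $\ell^{(p)}$ still satisfies the hypothesis because it coincides with $\ell$ on the strip of triangles swept out by $\ell_c$. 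Using the defining relation $b_{\g,T}(-)=b_{\eta,T}((-)^{(p)})$, the identity for $\g$ then follows from that for $\eta$, so I may assume $\g$ is not inside a self-folded triangle. If moreover $\ell_c$ does not cross $\g$ — equivalently $\g\notin\{\tau_1,\dots,\tau_r\}$ — then $b_{\g,T}(\ell_c)=0$, while $\mathsf{T}_{\ell_c}$ can be supported in a thin annular neighbourhood of $\ell_c$ disjoint from $\g$, so $b_{\g,T}(\mathsf{T}_{\ell_c}(\ell))=b_{\g,T}(\ell)$ and the identity is immediate. Thus it remains to treat $\g=\tau_i$ for some $i$.

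\emph{Localisation.} I would put $\ell,\ell_c,T$ in minimal position and choose a thin annular neighbourhood $A$ of $\ell_c$ which carries the support of $\mathsf{T}_{\ell_c}$ and meets $T$ only along sub-arcs of $\tau_1,\dots,\tau_r$. Recall the annulus $(S_{\ell_c},M_{\ell_c})$ with triangulation $T_{\ell_c}$; there is a tautological map $\pi\colon(S_{\ell_c},M_{\ell_c})\to(S,M)$ onto a neighbourhood of $\ell_c$ that sends $T_{\ell_c}$ onto $\{\tau_1,\dots,\tau_r\}$, the core of the annulus homeomorphically onto $\ell_c$, each $\ell_q$ onto the strip-piece of $\ell$ it represents, and conjugates the Dehn twist of the annulus to $\mathsf{T}_{\ell_c}$; moreover $\pi$ restricts to a homeomorphism from a thin neighbourhood $A'$ of the core onto $A$. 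The hypothesis that each $\ell_q$ intersects $T_{\ell_c}$ in ascending order presupposes that $\ell_q$ meets the core exactly once (and is, up to pushing its endpoints, an elementary laminate of $(S_{\ell_c},M_{\ell_c})$, so that Proposition \ref{annu} applies to it); consequently every maximal strip-piece of $\ell$ crosses $\ell_c$ exactly once, and $q\mapsto\ell_q$ identifies $\ell\cap\ell_c$ with the components of $\ell\cap A$. Since $\mathsf{T}_{\ell_c}$ alters $\ell$ only inside $A$, the contributions to $b_{\g,T}$ from $\g\cap\ell$ outside $A$ are unchanged; transporting the remaining contributions through $\pi|_{A'}$ and using that the annular twist is supported in $A'$, I would obtain
\[
 b_{\g,T}(\mathsf{T}_{\ell_c}(\ell))-b_{\g,T}(\ell)=\sum_{q\in\ell\cap\ell_c}\ \sum_{\pi(\de)=\g}\Bigl(b_{\de,T_{\ell_c}}(\mathsf{T}_{\ell_c}(\ell_q))-b_{\de,T_{\ell_c}}(\ell_q)\Bigr),
\]
the inner sum running over the finitely many arcs $\de$ of $T_{\ell_c}$ with $\pi(\de)=\g$.

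\emph{Conclusion.} Each $\ell_q$ being ascending, Proposition \ref{annu}(1) gives that $\mathsf{T}_{\ell_c}(\ell_q)$ is again ascending and that $b_{\de,T_{\ell_c}}(\mathsf{T}_{\ell_c}(\ell_q))-b_{\de,T_{\ell_c}}(\ell_q)=b_{\de,T_{\ell_c}}(\ell_c)$ for every arc $\de$ of $T_{\ell_c}$. Substituting this and using $b_{\g,T}(\ell_c)=\sum_{\pi(\de)=\g}b_{\de,T_{\ell_c}}(\ell_c)$ (the core meeting each such $\de$ exactly once), the right-hand side of the display collapses to $\#(\ell\cap\ell_c)\,b_{\g,T}(\ell_c)$, which is the asserted formula.

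\emph{Main obstacle.} The conceptual input is confined to Proposition \ref{annu}; the real work is the localisation step — building $\pi$ with due care for triangle vertices that are punctures of $(S,M)$ and for self-folded triangles abutting $\g$ (in particular when $\g=\eta$ is a loop, whose surrounding quadrilateral degenerates), and verifying that the $\pm1$ shear contributions near $\ell_c$ are reproduced faithfully, sheet by sheet, in $(S_{\ell_c},M_{\ell_c})$. I expect the bookkeeping of this correspondence, rather than any genuinely new idea, to be the main hurdle.
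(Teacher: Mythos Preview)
Your overall strategy—localising to the annulus $(S_{\ell_c},M_{\ell_c})$ and invoking Proposition~\ref{annu}—matches the paper's, and your displayed identity for $\g=\tau_i$ is the paper's case~(2). The problems lie in the two preliminary reductions.

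The serious gap is the second one. You assert that if $\ell_c\cap\g=\emptyset$ then $b_{\g,T}$ is preserved simply because the twist can be supported in a thin annulus $A$ disjoint from $\g$. This does not follow, and in fact fails without the ascending hypothesis. The contribution at a crossing $p\in\ell\cap\g$ is governed by which side of the surrounding quadrilateral the strand through $p$ leaves through on the $\triangle_i$-side of $\g$; when $\g$ is the third side of some $\triangle_i$ crossed by $\ell_c$ (the paper's case~(3)), that quadrilateral contains $\triangle_i$ and hence meets $A$. A descending strip-piece starting at $p\in\g$, crossing $\ell_c$ inside $\triangle_i$, and exiting through $\tau_i$ will, after the twist, exit through $\tau_{i+1}$ instead—switching an adjacent-side (zero) contribution to an opposite-side ($\pm1$) one. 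It is precisely the ascending hypothesis that excludes such strands (a strip-piece starting on $\g\subset\partial\triangle'_i$ must, if ascending, cross $\tau_{i+1}$ first), and the paper uses it explicitly at this point. Your argument must too; $A\cap\g=\emptyset$ alone is not enough.

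The first reduction also slips. When the self-folded arc $\g$ is itself among the $\tau_i$, the enclosed puncture $p$ lies on the boundary of $T_{\ell_c}$, and if $\ell$ spirals at $p$ then $\ell$ and $\ell^{(p)}$ do \emph{not} agree on the strip: the corresponding pieces $\ell_q$ and $(\ell^{(p)})_q$ have their ends on opposite sides of the vertex $p$. One must check directly, as the paper does, that $(\ell^{(p)})_q$ is still ascending; it is, but not for the reason you give.
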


\begin{proof}
 The proof is divided into the following three cases (1)--(3).

 (1) If $\g$ is not a side of triangles of $T$, then the assertion is clear.

 (2) We assume that $\g$ is a side of some triangle of $T$ and $\g \cap \ell_c \neq \emptyset$. If $\g$ is not inside a self-folded triangle of $T$, then the construction of $T_{\ell_c}$ preserves the quadrilateral surrounding $\g$. Therefore, we have
{\setlength\arraycolsep{0.5mm}
\begin{eqnarray*}
 b_{\g,T}(\mathsf{T}_{\ell_c}(\ell)) - b_{\g,T}(\ell) &=& \sum_{q \in \ell \cap \ell_c} \sum_{1 \le i \le r, \tau_i = \g} \Bigl(b_{\tau_i,T_{\ell_c}}(\mathsf{T}_{\ell_c}(\ell_q)) - b_{\tau_i,T_{\ell_c}}(\ell_q)\Bigr)\\
 &\overset{\ref{annu}(1)}{=}& \sum_{q \in \ell \cap \ell_c} \sum_{1 \le i \le r, \tau_i = \g} b_{\tau_i,T_{\ell_c}}(\ell_c) = \#(\ell \cap \ell_c) b_{\g,T}(\ell_c).
\end{eqnarray*}}
 Suppose that $\g$ is inside a self-folded triangle $\{\g,\g'\}$ of $T$ enclosing a puncture $p$. Recall that $b_{\g,T}(\ell) = b_{\g',T}(\ell^{(p)})$, where $\ell^{(p)}$ is a laminate obtained from $\ell$ by changing the directions of its spirals at $p$ if they exist (see Subsection \ref{lami}). Thus the assertion follows from the previous case if for all $q \in \ell^{(p)} \cap \ell_c$, $(\ell^{(p)})_q$ intersects with $T_{\ell_c}$ in ascending order. This is checked as follows: If no ends of $\ell$ are spirals at $p$, then $\ell^{(p)}=\ell$, hence it is clear. Otherwise, $\ell \neq \ell^{(p)}$, and $\ell \cap \ell_c$ and $\ell^{(p)} \cap \ell_c$ are identified in the natural way. Then $\ell_q$ and $(\ell^{(p)})_q$ are only different in that their ends around $p$ are given by
\[
\begin{tikzpicture}[baseline=0mm]
 \coordinate (0) at (0,-0.7); \coordinate (l) at (-1,0.7); \coordinate (r) at (1,0.7); \coordinate (u) at (0,0.7);
 \draw (0)--(l)--(r)--(0)--node[fill=white,inner sep=2,pos=0.3]{$\g$}(u);
 \draw[blue] (-0.3,0.7) .. controls (0,0) ..node[pos=0.7,above]{$\ell'$}(1.2,0);
 \node[above] at(u){$p$};
 \draw (-1.2,0.7)--(l) (r)--(1.2,0.7) (-1.2,-0.7)--(1.2,-0.7);
\end{tikzpicture}
 \hspace{5mm}\text{and}\hspace{5mm}
\begin{tikzpicture}[baseline=0mm]
 \coordinate (0) at (0,-0.7); \coordinate (l) at (-1,0.7); \coordinate (r) at (1,0.7); \coordinate (u) at (0,0.7);
 \draw (0)--(l)--(r)--(0)--node[fill=white,inner sep=2,pos=0.5]{$\g$}(u);
 \draw[blue] (0.3,0.7) .. controls (0.6,0) ..node[pos=0.7,below]{$\ell''$}(1.2,0);
 \node[above] at(u){$p$};
 \draw (-1.2,0.7)--(l) (r)--(1.2,0.7) (-1.2,-0.7)--(1.2,-0.7);
\end{tikzpicture}
 \hspace{5mm}\text{in $T_{\ell_c}$, where $\{\ell',\ell''\}=\{\ell_q,(\ell^{(p)})_q\}$.}
\]
 Therefore, $(\ell^{(p)})_q$ also intersects with $T_{\ell_c}$ in ascending order. 

 (3) We assume that $\g$ is a side of some triangle $\triangle_i$ of $T$ and $\g \cap \ell_c = \emptyset$. Then we prove $b_{\g,T}(\mathsf{T}_{\ell_c}(\ell)) = b_{\g,T}(\ell)$. In this case, $\g$ is not inside a self-folded triangle of $T$. Indeed, if $\g$ is inside a self-folded triangle of $T$, then $\g$ is either $\tau_i$ or $\tau_{i+1}$, hence it is a contradiction. Therefore, there is the quadrilateral $\squareslash$ surrounding $\g$ of $T$. Since for all $q \in \ell \cap \ell_c$, $\ell_q$ intersects with $T_{\ell_c}$ in ascending order, the Dehn twist $\mathsf{T}_{\ell_c}$ affects $\ell \cap \squareslash$ as follows:
\[
 \ell \cap \squareslash =
\begin{tikzpicture}[baseline=-5mm]
 \coordinate (u) at (0,0.7); \coordinate (d) at (0,-2); \coordinate (l) at (-1.2,0); \coordinate (r) at (1.2,0);
 \draw (d)--node[left]{$\tau_i$}(l)--node[fill=white,inner sep=2]{$\g$}(r)--node[right,pos=0.4]{$\tau_{i+1}$}(d) (l)--(u)--(r);
 \draw[blue]($(l)!0.4!(u)$)--($(r)!0.6!(d)$); \draw[blue,dotted]($(r)!0.6!(d)$)--(1.2,-2);
 \draw[blue]($(l)!0.3!(u)$)..controls(-0.7,0)and(-0.7,-0.2)..($(l)!0.2!(d)$);
 \draw[blue,dotted](-1.7,0)..controls(-1.6,-0.6)and(-1.1,-0.5)..($(l)!0.2!(d)$);
 \draw[blue]($(r)!0.3!(u)$)..controls(0.2,0)and(0.2,-0.6)..($(r)!0.5!(d)$);
 \draw[blue,dotted]($(r)!0.5!(d)$)--(1.25,-1.7);
 \draw[red] (-1.3,-1.5)--node[fill=white,inner sep=1,pos=0.15]{$\ell_c$}node[pos=0.5]{$>$}(1.3,-1.5);
\end{tikzpicture}
 \hspace{5mm} \xrightarrow{\mathsf{T}_{\ell_c}} \hspace{5mm}
 \mathsf{T}_{\ell_c}(\ell) \cap \squareslash =
\begin{tikzpicture}[baseline=-5mm]
 \coordinate (u) at (0,0.7); \coordinate (d) at (0,-2); \coordinate (l) at (-1.2,0); \coordinate (r) at (1.2,0);
 \draw (d)--node[left]{$\tau_i$}(l)--node[fill=white,inner sep=2]{$\g$}(r)--node[right,pos=0.4]{$\tau_{i+1}$}(d) (l)--(u)--(r);
 \draw[blue]($(l)!0.4!(u)$)--($(r)!0.6!(d)$);
 \draw[blue]($(l)!0.3!(u)$)..controls(-0.7,0)and(-0.7,-0.2)..($(l)!0.2!(d)$);
 \draw[blue]($(r)!0.3!(u)$)..controls(0.2,0)and(0.2,-0.6)..($(r)!0.5!(d)$);
 \draw[blue]($(l)!0.7!(d)$)--($(r)!0.7!(d)$); \draw[blue]($(l)!0.8!(d)$)--($(r)!0.8!(d)$);
 \draw[red] (-1.3,-1.5)--node[fill=white,inner sep=1,pos=0.15]{$\ell_c$}node[pos=0.85]{$>$}(1.3,-1.5);
\end{tikzpicture}
\]
 Therefore, it gives $b_{\g,T}(\mathsf{T}_{\ell_c}(\ell)) = b_{\g,T}(\ell)$.
\end{proof}

 We are ready to prove Theorem \ref{Dehn}.

\begin{proof}[Proof of Theorem \ref{Dehn}]
 First of all, we prove Theorem \ref{Dehn} for an ideal triangulation $T$. For $q \in \ell \cap \ell_c$, by Proposition \ref{annu}(2) there exists $m_q \in \bZ_{\ge 0}$ such that $\mathsf{T}_{\ell_c}^{m_q}(\ell_q)$ intersects with $T_{\ell_c}$ in ascending order. Thus for
\[
 m \ge m' := \max_{q \in \ell \cap \ell_c}\{m_q\},
\]
$\mathsf{T}_{\ell_c}^m(\ell_q)$ intersects with $T_{\ell_c}$ in ascending order for each $q \in \ell \cap \ell_c$. Therefore, by Theorem \ref{Dehn?}, we have
{\setlength\arraycolsep{0.5mm}
\begin{eqnarray*}
 b_{\g,T}(\mathsf{T}_{\ell_c}^{m+1}(\ell)) &=& b_{\g,T}(\mathsf{T}_{\ell_c}^m(\ell)) + \#(\ell \cap \ell_c) b_{\g,T}(\ell_c) = \cdots\\
 &=& b_{\g,T}(\mathsf{T}_{\ell_c}^{m'}(\ell)) + (m+1-m')\#(\ell \cap \ell_c) b_{\g,T}(\ell_c).
\end{eqnarray*}}\par
 For an arbitrary tagged triangulation $T$, we recall that there is a unique ideal triangulation $T^0$ satisfying $T'=\iota(T^0)$, where $T'$ is obtained from $T$ by simultaneous changing all tags at some punctures if necessary (see Subsection \ref{lami} for details). Then the shear coordinate $b_{\g,T}(\ell)$ of a laminate $\ell$ is equal to $b_{\g^0,T^0}(\ell')$, where $\ell'$ is a laminate obtained from $\ell$ by changing the directions of its spirals at these punctures if they exist. Since the change of directions of its spirals and the Dehn twist $\mathsf{T}_{\ell_c}$ are compatible, the proof of Theorem \ref{Dehn} comes down to the case of ideal triangulations.
\end{proof}

\subsection{Proof of Theorem \ref{main}}\label{subsecmain}

 In this subsection, we fix a tagged triangulation $T$ of $(S,M)$. By Theorem \ref{thurston}, to prove the first assertion of Theorem \ref{main}, we only need to show that for each lamination $L$ on $(S,M)$,
\begin{equation}\label{vectin}
 b_T(L) \in \overline{\bigcup_{T' \in \bT}C_T(\e(T'))}.
\end{equation}
 To prove \eqref{vectin}, we need some preparation. We have decompositions \eqref{decomp} of $L$. By Proposition \ref{tagpar}(2), $\e^{-1}((L_{\rm nc})_{\sf pq})$ is a partial tagged triangulation of $(S,M)$. Then we take a set $U$ of tagged arcs of $(S,M)$ such that $T_L := \e^{-1}((L_{\rm nc})_{\sf pq}) \sqcup U$ is a tagged triangulation.

\begin{lemma}\label{clU}
 Let $L$ be a lamination on $(S,M)$.
\begin{itemize}
 \item[(1)] Any closed laminate $\ell$ in $L_{\rm cl}$ does not intersect with tagged arcs of $\e^{-1}((L_{\rm nc})_{\sf pq})$, but it intersects with at least one tagged arc of $U$.
 \item[(2)] $C_T(\e\mathsf{T}_{\ell}(T_L)) = C_T((L_{\rm nc})_{\sf pq} \sqcup \e \mathsf{T}_{\ell}(U))$.
\end{itemize}
\end{lemma}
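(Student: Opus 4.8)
The plan is to derive both statements from two facts: the laminates of $L$ are pairwise non-crossing, and the Dehn twist $\mathsf{T}_{\ell}$ acts trivially on the isotopy class of any curve disjoint from $\ell$.

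First I would prove the crossing statement in (1). A laminate of $(L_{\rm nc})_{\sf pq}$ is either a laminate of $L$ itself (those coming from $L_{\rm el}$), in which case $\ell$ is disjoint from it because $L$ is a lamination, or it is one of the two elementary laminates $\mu_{\sf p},\mu_{\sf q}$ attached to an exceptional $\mu\in L_{\rm ex}$. In the latter case, by Proposition \ref{excep} the laminate $\mu$ cuts out a region $R$ containing exactly one puncture $p$ and no other marked point, and $\mu_{\sf p}$, $\mu_{\sf q}$, $\e^{-1}(\mu_{\sf p})$ and $\e^{-1}(\mu_{\sf q})$ all lie in an arbitrarily small neighbourhood of $\overline{R}$. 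Since $\ell$ is disjoint from $\mu$ and from the marked points, $\ell$ lies either inside or outside $R$; it cannot lie inside, for an essential closed curve in a once-punctured disk would have to cut out a once-punctured disk, which laminates may not do. Hence $\ell$ avoids a neighbourhood of $\overline{R}$ and so is disjoint from $\mu_{\sf p}$, $\mu_{\sf q}$, $\e^{-1}(\mu_{\sf p})$ and $\e^{-1}(\mu_{\sf q})$. Combining the two cases — and using for the first one that $\e(\de)$ and $\de$ agree outside arbitrarily small neighbourhoods of $M$, while $\ell$ avoids $M$, so that $\ell$ has the same (namely zero) geometric intersection number with $\de$ as with $\e(\de)$ — we conclude that $\ell$ crosses no tagged arc of $\e^{-1}((L_{\rm nc})_{\sf pq})$. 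For the remaining part of (1): $\ell$ is an essential closed curve, hence is not contained in the complement of the tagged triangulation $T_L$, which is a union of (possibly once-punctured) triangles; so $\ell$ crosses some arc of $T_L$, and this arc must lie in $U$.

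Next I would prove (2). By (1), $\ell$ is disjoint from every arc of $\e^{-1}((L_{\rm nc})_{\sf pq})$, so $\mathsf{T}_{\ell}$ fixes each such arc up to isotopy, whence $\mathsf{T}_{\ell}(T_L) = \e^{-1}((L_{\rm nc})_{\sf pq}) \sqcup \mathsf{T}_{\ell}(U)$, again a tagged triangulation. Applying the injective map $\e$ turns this into a disjoint union of distinct laminates, $\e\mathsf{T}_{\ell}(T_L) = \e\e^{-1}((L_{\rm nc})_{\sf pq}) \sqcup \e\mathsf{T}_{\ell}(U)$. Finally, $C_T(-)$ is the cone spanned by the shear-coordinate vectors of its members and so depends only on the underlying set of vectors; since the underlying set of the multi-set $(L_{\rm nc})_{\sf pq}$ is exactly $\e\e^{-1}((L_{\rm nc})_{\sf pq})$ — the same observation already used in the proof of Proposition \ref{tagpar}(3) — we get
\[
 C_T(\e\mathsf{T}_{\ell}(T_L)) = C_T(\e\e^{-1}((L_{\rm nc})_{\sf pq}) \sqcup \e\mathsf{T}_{\ell}(U)) = C_T((L_{\rm nc})_{\sf pq} \sqcup \e\mathsf{T}_{\ell}(U)).
\]

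The hard part will be the geometric bookkeeping in the first half of (1): in each of the two configurations listed in Proposition \ref{excep}, one must identify precisely the once-punctured region cut out by an exceptional laminate and verify that a closed laminate disjoint from that laminate stays disjoint both from the associated conjugate pair and from its two elementary laminates. Once this is settled, part (2) is essentially formal, using only the triviality of $\mathsf{T}_{\ell}$ on curves disjoint from $\ell$ and the insensitivity of the cones $C_T(-)$ to multiplicities, exactly as in Proposition \ref{tagpar}.
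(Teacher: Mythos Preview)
Your proposal is correct and follows essentially the same approach as the paper. The only difference is in the level of detail for part~(1): where you carry out a case analysis (elementary versus exceptional laminates in $L_{\rm nc}$) and argue geometrically about the once-punctured region cut out by an exceptional laminate, the paper dispatches all cases at once with the single observation that both $(-)_{\sf pq}$ and $\e^{-1}$ modify curves only in small neighbourhoods of marked points, which the closed laminate $\ell$ avoids; this makes the ``hard part'' you anticipate essentially trivial. For part~(2) you unfold the argument of Proposition~\ref{tagpar}(3) inline, whereas the paper simply cites it.
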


\begin{proof}
 $(1)$ Since $L$ is a lamination, any closed laminate $\ell$ in $L_{\rm cl}$ does not intersect with all laminates in $L_{\rm nc}$. Thus $\ell$ does not intersect with all tagged arcs of $\e^{-1}((L_{\rm nc})_{\sf pq})$ since $\e^{-1}$ and $(-)_{\sf pq}$ transform laminates just around the marked points of $(S,M)$. Moreover, since $T_L$ is a tagged triangulation and $\ell$ is not contractible, $\ell$ intersects with at least one tagged arc of $T_L$, hence it is of $U$.

 $(2)$ By $(1)$, we have $\mathsf{T}_{\ell}(T_L) = \e^{-1}((L_{\rm nc})_{\sf pq}) \sqcup \mathsf{T}_{\ell}(U)$ and it is a tagged triangulation. The desired equality is given by Proposition \ref{tagpar}(3).
\end{proof}

 Let $\ell_1,\ldots,\ell_t$ be all distinct closed laminates in $L_{\rm cl}$ and $n_i$ the multiplicity of $\ell_i$ in $L_{\rm cl}$ for $i \in \{1,\ldots,t\}$. By Lemma \ref{clU}(1), $N_i := \sum_{\epsilon \in U}\#(\ell_i \cap \epsilon)$ is not zero. In particular, $N_i$ is equal to $\sum_{\epsilon \in U}\#(\ell_i \cap \e(\epsilon))$. We fix the direction of $\ell_i$ for $1 \le i \le t$ and consider the Dehn twists $\mathsf{T}_{\ell_i}$. Since $\ell_i$ are not intersect, $\mathsf{T}_{\ell_i}$ are commutative. We set
\[
 \mathsf{T} := \prod_{i=1}^t \mathsf{T}_{\ell_i}^{\frac{N_1 \cdots N_t}{N_i}n_i}.
\]

\begin{proposition}\label{allcase}
 Let $L$ be a lamination on $(S,M)$. Then we have
\[
 b_T(L) \in \overline{\bigcup_{m \ge 0} C_T(\e\mathsf{T}^m(T_L))}.
\]
\end{proposition}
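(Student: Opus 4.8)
The plan is to produce, for each sufficiently large $m$, an explicit point $v_m$ lying in the cone $C_T(\e\mathsf{T}^m(T_L))$ and to show that $v_m\to b_T(L)$ as $m\to\infty$; the proposition then follows at once. First I record the shape of the relevant cones. By Lemma \ref{clU}(1) none of the closed laminates $\ell_1,\dots,\ell_t$ meets a tagged arc of $\e^{-1}((L_{\rm nc})_{\sf pq})$, so the homeomorphism $\mathsf{T}^m$ fixes $\e^{-1}((L_{\rm nc})_{\sf pq})$ and
\[
 \mathsf{T}^m(T_L)=\e^{-1}((L_{\rm nc})_{\sf pq})\sqcup\mathsf{T}^m(U)
\]
is again a tagged triangulation. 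Hence by Proposition \ref{tagpar}(3),
\[
 C_T(\e\mathsf{T}^m(T_L))=C_T\bigl((L_{\rm nc})_{\sf pq}\sqcup\e\mathsf{T}^m(U)\bigr),
\]
which is spanned by the vectors $b_T(\ell)$ for $\ell\in(L_{\rm nc})_{\sf pq}$ together with the vectors $b_T(\mathsf{T}^m(\e(\epsilon)))$ for $\epsilon\in U$; here I use that each $\ell_i$ is an interior curve disjoint from $M$, so $\mathsf{T}_{\ell_i}$ fixes a neighbourhood of every marked point and therefore $\e$ is equivariant, $\e(\mathsf{T}^m(\epsilon))=\mathsf{T}^m(\e(\epsilon))$.

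Next I extract the asymptotics of these spanning vectors from Theorem \ref{Dehn}. Write $a_i:=\frac{N_1\cdots N_t}{N_i}\,n_i$, so that $\mathsf{T}^m=\prod_{i=1}^{t}\mathsf{T}_{\ell_i}^{\,ma_i}$. Fix $\epsilon\in U$. Choosing pairwise disjoint annular neighbourhoods of the $\ell_i$ supporting the twists, the $\mathsf{T}_{\ell_i}$ commute and each intersection number $\#(\epsilon\cap\ell_i)$ is preserved by every $\mathsf{T}_{\ell_j}$; iterating Theorem \ref{Dehn} over $i=1,\dots,t$ then yields $m_\epsilon\in\bZ_{\ge0}$ such that, for all $m\ge m_\epsilon$,
\[
 b_T(\mathsf{T}^m(\e(\epsilon)))=b_T(\mathsf{T}^{m_\epsilon}(\e(\epsilon)))+(m-m_\epsilon)\sum_{i=1}^{t}a_i\,\#(\epsilon\cap\ell_i)\,b_T(\ell_i).
\]
Put $m_0:=\max_{\epsilon\in U}m_\epsilon$. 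Summing over $\epsilon\in U$ and using $\sum_{\epsilon\in U}\#(\epsilon\cap\ell_i)=N_i$ and $a_iN_i=N_1\cdots N_t\cdot n_i$, we obtain for every $m\ge m_0$
\[
 \sum_{\epsilon\in U}b_T(\mathsf{T}^m(\e(\epsilon)))=w+m\,N_1\cdots N_t\;b_T(L_{\rm cl}),
\]
where $w\in\bR^{|T|}$ does not depend on $m$.

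Finally, for $m\ge\max\{m_0,1\}$ set
\[
 v_m:=b_T\bigl((L_{\rm nc})_{\sf pq}\bigr)+\frac{1}{m\,N_1\cdots N_t}\sum_{\epsilon\in U}b_T(\mathsf{T}^m(\e(\epsilon))).
\]
Since $b_T((L_{\rm nc})_{\sf pq})$ is the sum of the vectors $b_T(\ell)$ over $\ell\in(L_{\rm nc})_{\sf pq}$ with nonnegative coefficients and $\frac{1}{m\,N_1\cdots N_t}>0$, the vector $v_m$ is a nonnegative combination of the spanning vectors of $C_T(\e\mathsf{T}^m(T_L))$ listed above, so $v_m\in C_T(\e\mathsf{T}^m(T_L))$. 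Letting $m\to\infty$ and using Lemma \ref{vecsep} (so that $b_T((L_{\rm nc})_{\sf pq})=b_T(L_{\rm nc})$),
\[
 v_m\longrightarrow b_T\bigl((L_{\rm nc})_{\sf pq}\bigr)+b_T(L_{\rm cl})=b_T(L_{\rm nc})+b_T(L_{\rm cl})=b_T(L),
\]
hence $b_T(L)\in\overline{\bigcup_{m\ge0}C_T(\e\mathsf{T}^m(T_L))}$. The step I expect to be the main obstacle is the passage in the second paragraph: turning the single-curve statement of Theorem \ref{Dehn} into the stated \emph{simultaneous} linear asymptotics for the multitwist $\mathsf{T}^m$, i.e.\ verifying that once enough $\mathsf{T}_{\ell_1}$-twists have been applied to reach the stable range, the subsequent twists along the disjoint curves $\ell_2,\dots,\ell_t$ neither leave that range nor alter the slopes $\#(\epsilon\cap\ell_i)$. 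This rests entirely on choosing pairwise disjoint annular supports for the $\mathsf{T}_{\ell_i}$; everything else is bookkeeping with the constants $N_i,n_i$ and the uniformity of the threshold over the finite set $U$.
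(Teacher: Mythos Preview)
Your proof is correct and follows essentially the same approach as the paper: both use Theorem~\ref{Dehn} to obtain that $b_T(\e\mathsf{T}^m(U))$ is eventually affine in $m$ with slope $N_1\cdots N_t\,b_T(L_{\rm cl})$, then combine this with $b_T(L_{\rm nc})\in C_T((L_{\rm nc})_{\sf pq})$ via Proposition~\ref{tagpar} and Lemma~\ref{clU}. The only cosmetic difference is that the paper first exhibits $b_T(L_{\rm cl})$ as a limit of normalized vectors in $\overline{\bigcup_m C_T(\e\mathsf{T}^m(U))}$ and then adds $b_T(L_{\rm nc})$, whereas you package both contributions into a single convergent sequence $v_m$; your observation that the multitwist extension of Theorem~\ref{Dehn} is the one nontrivial passage is well taken, and the paper invokes it in exactly the same way without further comment.
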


\begin{proof}
 By Theorem \ref{Dehn}, for $m_T \gg 0$ and $m \ge m_T$, we have
{\setlength\arraycolsep{0.5mm}
\begin{eqnarray*}
 b_T(\e\mathsf{T}^{m}(U)) &=& b_T(\e\mathsf{T}^{m_T}(U)) + \sum_{i=1}^t \Bigl(\frac{N_1 \cdots N_t}{N_i}n_i\Bigr) (m-m_T) N_i b_T(\ell_i)\\
 &=& b_T(\e\mathsf{T}^{m_T}(U)) + (m-m_T) N_1 \cdots N_t \sum_{i=1}^t n_i b_T(\ell_i)\\
 &=& b_T(\e\mathsf{T}^{m_T}(U)) + (m-m_T) N_1 \cdots N_t b_T(L_{\rm cl}).
\end{eqnarray*}}This equality gives
\[
 \lim_{m \rightarrow \infty} \frac{b_T(\e\mathsf{T}^m(U))}{m-m_T} = N_1 \cdots N_t b_T(L_{\rm cl}),
\]
 thus
\[
 b_T(L_{\rm cl}) \in \overline{\bigcup_{m \ge 0} C_T(\e\mathsf{T}^m(U))}.
\]
 Since $C_T(L_{\rm nc}) \subseteq C_T((L_{\rm nc})_{\sf pq})$ by Proposition \ref{tagpar}(1), we have
{\setlength\arraycolsep{0.5mm}
\begin{eqnarray*}
 b_T(L) = b_T(L_{\rm nc}) + b_T(L_{\rm cl}) &\in& C_T(L_{\rm nc}) + \overline{ \bigcup_{m \ge 0} C_T(\e\mathsf{T}^m(U))}\\
 &\subseteq& \overline{\bigcup_{m \ge 0} C_T((L_{\rm nc})_{\sf pq} \sqcup \e\mathsf{T}^m(U))}= \overline{\bigcup_{m \ge 0} C_T(\e\mathsf{T}^m(T_L))},
\end{eqnarray*}}
where the last equality is given by Lemma \ref{clU}(2).
\end{proof}

\begin{proof}[Proof of the first assertion of Theorem \ref{main}]
 Since $\mathsf{T}^m(T_L)$ is a tagged triangulation of $(S,M)$ for any $m \in \bZ_{\ge 0}$, Proposition \ref{allcase} finishes the proof of \eqref{vectin}. Hence the assertion holds.
\end{proof}

 To prove the second assertion of Theorem \ref{main}, we give the following results in a more general setting.

\begin{proposition}\label{sumshear}
 Let $T$ be an ideal triangulation of $(S,M)$ without self-folded triangles.
\begin{itemize}
 \item[(1)] For a laminate $\ell$, we have
\[
 \sum_{\g \in T}b_{\g,T}(\ell) \in \{0,\pm 1\}.
\]
 \item[(2)] For a tagged arc $\epsilon$ of $(S,M)$ whose both endpoints are punctures, we have
\[
 \sum_{\g \in T}b_{\g,T}(\e(\epsilon)) = \left\{
     \begin{array}{ll}
   -1 & \text{if both ends of $\epsilon$ are tagged plain},\\
   1 & \text{if both ends of $\epsilon$ are tagged notched},\\
   0 & \text{otherwise}.
     \end{array} \right.
\]
\end{itemize}
\end{proposition}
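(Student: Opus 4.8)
The plan is to analyze the quantity $\sum_{\g \in T} b_{\g,T}(\ell)$ by tracking the contributions coming from each intersection point of $\ell$ with arcs of $T$, and then to regroup those contributions triangle-by-triangle rather than arc-by-arc. Since $T$ has no self-folded triangles, every $\g \in T$ is a side of a genuine quadrilateral (or of two triangles, counting a boundary triangle only once), so the $\pm 1$ contribution rule of Figure \ref{SZ} applies verbatim at every crossing of $\ell$ with $\g$. A segment of $\ell$ that enters one triangle $\triangle$ of $T$ through a side $\g$ and leaves through a side $\g'$ of the same triangle corresponds to one crossing with $\g$ and one crossing with $\g'$; I would check, by the finitely many local pictures of how a chord passes through a triangle, that the contribution of this crossing-pair to $b_{\g,T}(\ell)+b_{\g',T}(\ell)$ is exactly $0$, because the segment cuts the quadrilateral surrounding $\g$ on one side (say $+1$) and the quadrilateral surrounding $\g'$ on the opposite side ($-1$), or contributes $0$ to one of them. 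Thus every ``interior passage'' of $\ell$ through a triangle contributes $0$ to the total sum $\sum_{\g\in T} b_{\g,T}(\ell)$, and only the two ends of $\ell$ can contribute. For part (1), this leaves at most two end-contributions, each in $\{0,\pm 1\}$; a short case analysis of the end behaviour (an endpoint on $\partial S$ versus a spiral around a puncture, clockwise or counterclockwise) shows the two ends cannot both contribute with the same nonzero sign in a way that sums to $\pm 2$ — in fact one checks the net end-contribution lies in $\{0,\pm 1\}$ — giving $\sum_{\g\in T} b_{\g,T}(\ell) \in \{0,\pm1\}$.

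For part (2), I would apply the telescoping computation of part (1) to the specific laminate $\ell = \e(\epsilon)$, where $\epsilon$ is a tagged arc with both endpoints at punctures. By the definition of the elementary laminate $\e(\epsilon)$, each end of $\e(\epsilon)$ is a spiral around the corresponding puncture, clockwise if $\epsilon$ is tagged plain there and counterclockwise if notched (Figure \ref{elelam}). So the two end-contributions are determined by the two tags. I would evaluate the end-contribution of a single spiral around a puncture $p$ by drawing the local fan $\de_1,\dots,\de_k$ of arcs of $T$ incident to $p$ (as in Figure \ref{locp}) and summing the contributions of the spiral's infinitely-winding crossings; all but finitely many cancel in telescoping pairs, leaving a net contribution of $-1$ for a clockwise spiral (plain tag) and $+1$ for a counterclockwise spiral (notched tag), with the book-keeping that the first/last crossing near where $\e(\epsilon)$ ``comes in'' along $\epsilon$ is the one that survives. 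Since $\epsilon$ runs between two punctures, both ends are spirals, so the total is the sum of these two end-contributions: $-1 + -1$ would overshoot, but the correct local count gives $-1$ total when both tags are plain, $+1$ when both are notched, and $-1+1 = 0$ in the mixed case, matching the claimed formula.

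The main obstacle I anticipate is the careful local bookkeeping at the spirals: a spiral around a puncture crosses the arcs of $T$ incident to that puncture infinitely many times, and one must be precise about which crossings pair up and cancel and which single crossing is left over, keeping track of signs according to Figure \ref{SZ} and of the fact that $T$ has no self-folded triangle (so the ``surrounding quadrilateral'' picture is valid at every such crossing). This is essentially the same delicate analysis already carried out in the proof of Lemma \ref{vecsep} for the crossings $p_1,\dots,p_m$, so I would model the argument on that proof: isolate the contributions $c_i$ at each crossing of the spiral with $\de_i$, observe that the contributions at all ``intermediate'' $\de_i$ vanish (they look like the right diagram of Figure \ref{SZ} followed by its mirror, summing to $0$), and pin down the sign of the single surviving contribution from the direction of winding. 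Once the single-spiral computation is nailed down, both parts follow by adding the (at most two) surviving end-contributions, and part (1) is just the observation that each such contribution lies in $\{0,\pm1\}$ while the interior passages contribute $0$.
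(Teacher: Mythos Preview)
Your telescoping intuition is correct and matches the paper, but the specific claim that the two crossings bounding a single segment of $\ell$ through one triangle contribute a net $0$ is false, and this is exactly why you hit the $-2$ versus $-1$ problem in part~(2). The contribution at a crossing with $\g$ depends on the full quadrilateral around $\g$, hence also on the triangle on the \emph{other} side of $\g$, not just on the segment inside the triangle you are standing in. For instance, if $\ell$ passes through three consecutive triangles whose third sides lie on the left, left, right of $\ell$ respectively, then the entry crossing of the middle segment contributes $0$ (same side twice) while the exit crossing contributes $+1$ (left-to-right switch), so that passage contributes $+1$, not $0$. And indeed, if interior passages really contributed $0$ and each spiral end independently contributed $\pm1$, two plain ends would force the total to be $-2$; you noticed this discrepancy but waved it away rather than letting it tell you the regrouping was wrong.

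The paper organizes the same telescoping correctly: unroll $\ell$ into a strip (or polygon) $T_\ell$ of triangles $\triangle'_0,\dots,\triangle'_r$, label each one \emph{left} or \emph{right} according to which side of $\ell$ its boundary edge lies, and check directly from Figure~\ref{SZ} that $b_{\tau_i,T_\ell}(\ell)$ is $+1$, $-1$, or $0$ according to whether $(\triangle'_{i-1},\triangle'_i)$ is (left,\,right), (right,\,left), or of the same type. Thus each crossing contribution is a discrete derivative of the left/right sequence, and the sum telescopes to a value in $\{0,\pm1\}$ determined solely by the types of $\triangle'_0$ and $\triangle'_r$; since $T$ has no self-folded triangles this equals $\sum_{\g\in T}b_{\g,T}(\ell)$. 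For part~(2), a clockwise (plain) spiral at the starting end forces $\triangle'_0$ to be a right triangle and at the ending end forces $\triangle'_r$ to be left, and oppositely for counterclockwise spirals; the three cases then drop out immediately. So each end does not contribute an additive $\pm1$; rather, it fixes one boundary term of the telescoping sum.
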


\begin{proof}
 (1) Fix a direction of $\ell$. For a closed laminate $\ell$, let $T_{\ell}$ be in \eqref{Tellc}. For a non-closed laminate $\ell$, we define a polygon $(S_{\ell},M_{\ell})$ and its ideal triangulation $T_{\ell}$ consisting of triangles $\triangle'_0,\triangle'_1,\ldots,\triangle'_r$ and $\tau_i = \triangle'_{i-1}\cap\triangle'_i$ in the same way, where we need a slight modification at the spirals. If the starting (resp., ending) end of $\ell$ is a spiral around a puncture $p$, then the triangles of $T$ incident to $p$ are $\triangle_0,\ldots,\triangle_s$ for $1<s<r$ (resp., $\triangle_t,\ldots,\triangle_r$ for $1<t<r$) as follows:
\[
\begin{tikzpicture}[baseline=-0.5mm]
 \coordinate (l) at (0,0); \coordinate (lu) at (0.8,1); \coordinate (ld) at (0.8,-1);
 \coordinate (ru) at (3.1,1); \coordinate (rd) at (3.1,-1); \coordinate (rru) at (4.7,1); \coordinate (rrd) at (4.7,-1);
 \coordinate (llu) at (-0.8,1); \coordinate (lld) at (-0.8,-1);
 \coordinate (r) at (3.9,0); \coordinate (u) at (2.1,1); \coordinate (d) at (1.8,-1);
 \node at (2,-0.4) {$\cdots$};
 \draw (l) to (lu); \draw (l) to (ld) (lld)--(l) (r)--(rru);
 \draw (lu) to (ld);
 \draw (lu) to (ru); \draw (ld) to (rd); \draw (ru) to (r); \draw (rd) to (r); \draw (ru) to (rru);
 \draw (rru) to (rrd); \draw (rrd) to (rd); \draw (rrd) to (r);
 \draw (lu) to (llu); \draw (llu) to (lld); \draw (lld) to (ld); \draw (llu) to (l);
 \draw (ru) to (rd);
 \draw[blue] (0,0.3) .. controls (1,0.15) and (2.9,0.15) ..node[fill=white,inner sep=2]{$\ell$}node[pos=0.35]{$>$}node[pos=0.65]{$>$} (3.9,0.3);
 \draw[blue] (0,0.3) arc (90:270:2.6mm);
 \draw[blue] (0,-0.22) arc (-90:90:2mm);
 \draw[blue] (3.9,0.3) arc (90:-90:2.6mm);
 \draw[blue] (3.9,-0.22) arc (-90:-270:2mm);
 \draw[loosely dotted] (4.4,-0.2) arc (-30:30:0.5);
 \draw[loosely dotted] (-0.5,0.2) arc (150:210:0.5);
 \node at(0,-0.6) {$\triangle_0$}; \node at(0,0.6) {$\triangle_{s-1}$}; \node at(0.55,-0.2) {$\triangle_s$};
 \node at(3.9,-0.6) {$\triangle_r$}; \node at(3.9,0.6) {$\triangle_{t+1}$}; \node at(3.4,-0.2) {$\triangle_t$};
\end{tikzpicture}
\ \text{in}\ T
 \hspace{4mm} \longrightarrow \hspace{4mm}
 T_{\ell}=
\begin{tikzpicture}[baseline=-0.5mm]
 \coordinate (lu) at (-2,1); \coordinate (ru) at (2,1); \coordinate (ld) at (-2,-1); \coordinate (rd) at (2,-1);
 \draw (lu)--(ru)--(rd)--(ld)--(lu);
 \draw (ld)--node[right,pos=0.6]{$\tau_1$}(-0.8,1); \draw (ru)--node[left,pos=0.4]{$\tau_r$}(0.8,-1);
 \node at(0.1,0.2) {$\cdots$}; \node at(-1.6,0.4) {$\triangle'_0$}; \node at(1.65,-0.25) {$\triangle'_r$};
 \draw[blue] (-2,-0.5)--node[fill=white,inner sep=2]{$\ell$}node[pos=0.3]{$>$}node[pos=0.7]{$>$}(2,-0.5);
\end{tikzpicture}
\]
 Let $\ell$ be an arbitrary laminate of $(S,M)$ and we consider the ideal triangulation $T_{\ell}$ consisting of triangles $\triangle'_0,\ldots,\triangle'_r$ ($\triangle'_0 = \triangle'_r$ if $\ell$ is closed).
 We call $\triangle'_i$ a {\it left} (resp., {\it right}) {\it triangle} if a side of $\triangle'_i$ is a boundary segment of $T_{\ell}$ on the left (resp., right) side of $\ell$. Then we have
\[
 b_{\tau_i,T_{\ell}}(\ell) = \left\{
     \begin{array}{ll}
   1 & \text{if $\triangle'_{i-1}$ is a left triangle and $\triangle'_{i}$ is a right triangle},\\
   -1 & \text{if $\triangle'_{i-1}$ is a right triangle and $\triangle'_{i}$ is a left triangle},\\
   0 & \text{otherwise}.
     \end{array} \right.
\]
 Therefore, we have
\begin{equation}\label{sumij}
 \sum_{k=1}^r b_{\tau_k,T_{\ell}}(\ell) = \left\{
     \begin{array}{ll}
   1 & \text{if $\triangle'_{0}$ is a left triangle and $\triangle'_{r}$ is a right triangle},\\
   -1 & \text{if $\triangle'_{0}$ is a right triangle and $\triangle'_{r}$ is a left triangle},\\
   0 & \text{otherwise}.
     \end{array} \right.
\end{equation}
 On the other hand, since $T$ has no self-folded triangles, we have
\[
 \sum_{\de \in T} b_{\de,T}(\ell) = \sum_{k=1}^r b_{\tau_k,T_{\ell}}(\ell).
\]
 Thus the assertion follows from \eqref{sumij}.

 (2) We consider $T_{\e(\epsilon)}$ as above and define the direction of $\epsilon$ from $\ell$ by the obvious way. If the starting point of $\epsilon$ is tagged plain (resp., notched), then $\triangle'_0$ is a right (resp., left) triangle. If the ending point of $\epsilon$ is tagged plain (resp., notched), then $\triangle'_0$ is a left (resp., right) triangle. Thus the assertion also follows from \eqref{sumij}.
\end{proof}

\begin{proof}[Proof of the second assertion of Theorem \ref{main}]
 Let $(S,M)$ be a closed surface with exactly one puncture $p$ and $T$ its tagged triangulation. In this case, all ends of tagged arcs of $T$ are tagged plain or they are tagged notched. Thus we can assume that $T$ is an ideal triangulation without self-folded triangles. Let $\epsilon$ and $\epsilon'$ be tagged arcs of $(S,M)$ tagged plain and notched, respectively. We only need to show that
\[
 b_T(\e(\epsilon)) \in \Bigl\{(a_{\g})_{\g \in T} \in \bR^{|T|} \mid \sum_{\g \in T} a_{\g} \le 0\Bigr\}\ \text{ and }\  b_T(\e(\epsilon')) \in \Bigl\{(a_{\g})_{\g \in T} \in \bR^{|T|} \mid \sum_{\g \in T} a_{\g} \ge 0\Bigr\},
\]
that is,
\[
 \sum_{\g \in T} b_{\g,T}(\e(\epsilon)) \le 0\ \text{ and }\ \sum_{\g \in T} b_{\g,T}(\e(\epsilon')) \ge 0.
\]
 It immediately follows from Proposition \ref{sumshear}(2).
\end{proof}

\subsection{Example of Proposition \ref{allcase}}\label{Kro}

 For an annulus $(S,M)$ with exactly two marked points, all laminates are given as follows:
\[
\begin{tikzpicture}[baseline=0mm]
 \coordinate (0) at (0,0);
 \coordinate (u) at (0,1);
 \coordinate (cu) at (0,0.2);
 \draw[pattern=north east lines] (0) circle (2mm); \draw (0) circle (10mm);
 \draw[blue] (0) circle (5mm);
 \fill (u) circle (0.7mm); \fill (cu) circle (0.7mm);
 \node[fill=white,inner sep=2] at(0,0.5) {\color{blue}{$\ell$}};
\end{tikzpicture}\ ,
 \hspace{3mm}
 \cdots\hspace{1mm}
\begin{tikzpicture}[baseline=0mm]
 \coordinate (0) at (0,0);
 \coordinate (u) at (0,1); \coordinate (d) at (0,-1);
 \coordinate (cu) at (0,0.2); \coordinate (cd) at (0,-0.2);
 \draw[pattern=north east lines] (0) circle (2mm); \draw (0) circle (10mm);
 \draw[blue] (d) .. controls (0.8,-0.7) and (0.8,0.6) .. node[fill=white,inner sep=1,pos=0.3]{$\ell_{-2}$} (0,0.6);
 \draw[blue] (0,-0.45) .. controls (-0.7,-0.5) and (-0.8,0.6) .. (0,0.6);
 \draw[blue] (0,-0.45) .. controls (0.6,-0.4) and (0.5,0.4) .. (0,0.4);
 \draw[blue] (cd) .. controls (-0.4,-0.4) and (-0.5,0.4) .. (0,0.4);
 \fill (u) circle (0.7mm); \fill (cu) circle (0.7mm);
\end{tikzpicture}
 \hspace{1mm}
\begin{tikzpicture}[baseline=0mm]
 \coordinate (0) at (0,0);
 \coordinate (u) at (0,1); \coordinate (d) at (0,-1);
 \coordinate (cu) at (0,0.2); \coordinate (cd) at (0,-0.2);
 \draw[pattern=north east lines] (0) circle (2mm); \draw (0) circle (10mm);
 \draw[blue] (d) .. controls (0.6,-0.5) and (0.5,0.4) .. node[fill=white,inner sep=1,pos=0.3]{$\ell_{-1}$} (0,0.4);
 \draw[blue] (cd) .. controls (-0.5,-0.3) and (-0.5,0.4) .. (0,0.4);
 \fill (u) circle (0.7mm); \fill (cu) circle (0.7mm);
\end{tikzpicture}
 \hspace{1mm}
\begin{tikzpicture}[baseline=0mm]
 \coordinate (0) at (0,0);
 \coordinate (u) at (0,1); \coordinate (d) at (0,-1);
 \coordinate (cu) at (0,0.2); \coordinate (cd) at (0,-0.2);
 \draw[pattern=north east lines] (0) circle (2mm); \draw (0) circle (10mm);
 \draw[blue] (d)-- node[fill=white,inner sep=1]{$\ell_0$} (cd);
 \fill (u) circle (0.7mm); \fill (cu) circle (0.7mm);
\end{tikzpicture}
 \hspace{1mm}
\begin{tikzpicture}[baseline=0mm]
 \coordinate (0) at (0,0);
 \coordinate (u) at (0,1); \coordinate (d) at (0,-1);
 \coordinate (cu) at (0,0.2); \coordinate (cd) at (0,-0.2);
 \draw[pattern=north east lines] (0) circle (2mm); \draw (0) circle (10mm);
 \draw[blue] (d) .. controls (-0.6,-0.5) and (-0.5,0.4) .. node[fill=white,inner sep=1,pos=0.3]{$\ell_1$} (0,0.4);
 \draw[blue] (cd) .. controls (0.5,-0.3) and (0.5,0.4) .. (0,0.4);
 \fill (u) circle (0.7mm); \fill (cu) circle (0.7mm);
\end{tikzpicture}
 \hspace{1mm}
\begin{tikzpicture}[baseline=0mm]
 \coordinate (0) at (0,0);
 \coordinate (u) at (0,1); \coordinate (d) at (0,-1);
 \coordinate (cu) at (0,0.2); \coordinate (cd) at (0,-0.2);
 \draw[pattern=north east lines] (0) circle (2mm); \draw (0) circle (10mm);
 \draw[blue] (d) .. controls (-0.8,-0.7) and (-0.8,0.6) .. node[fill=white,inner sep=1,pos=0.3]{$\ell_2$} (0,0.6);
 \draw[blue] (0,-0.45) .. controls (0.7,-0.5) and (0.8,0.6) .. (0,0.6);
 \draw[blue] (0,-0.45) .. controls (-0.6,-0.4) and (-0.5,0.4) .. (0,0.4);
 \draw[blue] (cd) .. controls (0.4,-0.4) and (0.5,0.4) .. (0,0.4);
 \fill (u) circle (0.7mm); \fill (cu) circle (0.7mm);
\end{tikzpicture}
 \hspace{1mm}\cdots,
\]
where $\ell$ is closed and $\ell_m = \mathsf{T}_{\ell}^m(\ell_0)$ is elementary for $m \in \bZ$. Their shear coordinates with respect to
\[
 T=
\begin{tikzpicture}[baseline=0mm]
 \coordinate (0) at (0,0);
 \coordinate (u) at (0,1);
 \coordinate (cu) at (0,0.2);
 \draw[pattern=north east lines] (0) circle (2mm); \draw (0) circle (10mm);
 \draw (u)-- node[fill=white,inner sep=1]{$2$} (cu);
 \draw (u) .. controls (-0.6,0.5) and (-0.5,-0.4) .. node[fill=white,inner sep=2]{$1$} (0,-0.4);
 \draw (cu) .. controls (0.5,0.3) and (0.5,-0.4) .. (0,-0.4);
 \fill (u) circle (0.7mm); \fill (cu) circle (0.7mm);
\end{tikzpicture}
\]
 are given by\vspace{-5mm}
\[
\left.
     \begin{array}{ll}
 b_T(\ell) = (b_{1,T}(\ell),b_{2,T}(\ell)) = (1,-1),\vspace{3mm}\\
 b_T(\ell_m) =  \left\{
     \begin{array}{ll}
   (m-1,-m) & \text{if $m \ge 0$},\\
   (-m-1,m+2) & \text{if $m < 0$}.
     \end{array} \right.
     \end{array} \right.
\hspace{5mm}\begin{tikzpicture}[baseline=0mm,scale=1.2]
 \coordinate (0) at (0,0); \coordinate (x) at (1,0); \coordinate (-x) at (-1,0);
 \coordinate (y) at (0,1); \coordinate (-y) at (0,-1);
 \draw[->] (0)--(x) node[right]{$b_T(\ell_{-2})$};
 \draw (0)--(-x) node[left]{$b_T(\ell_0)$};
 \draw[->] (0)--(y) node[above]{$b_T(\ell_{-1})$};
 \draw (0)--(-y) node[below]{$b_T(\ell_1)$};
 \draw (1,-0.5)--(0)--(0.5,-1) (1,-0.75)--(0)--(0.75,-1) (1,-0.875)--(0)--(0.875,-1) (1,-0.9375)--(0)--(0.9375,-1) (1,-0.96875)--(0)--(0.96875,-1);
 \draw (0)--(1,-1) node[below right]{$b_T(\ell)$};
 \node at(1.3,-0.55){$\vdots$};
 \node at(0.8,-1.25){$\cdots$};
\end{tikzpicture}
\]
 For a tagged triangulation $T'$ of $(S,M)$, the shear coordinate cone $C_T(\e(T'))$ is given by $\{\alpha b_T(\ell_j)+\beta b_T(\ell_{j+1}) \mid \alpha, \beta \in \bZ_{\ge 0}\}$ for some $j \in \bZ$. Then the set of integer vectors which are not contained in these shear coordinate cones is $\{b_T(\{k\ell\}) \mid k \in \bZ_{>0}\}$. Taking $L=\{k\ell\}$ and $T_L=T$, Proposition \ref{allcase} means that
\[
 b_T(k\ell) = (k,-k) \in \overline{\bigcup_{m \ge 0} C_T(\e\mathsf{T}^m(T))} = \overline{\bigcup_{m \ge 0} C_T(\{\ell_m,\ell_{m+1}\})}.
\]
It is described in the above picture.

\section{Cluster algebras}\label{pfgfan}

\subsection{Cluster algebras and triangulated surfaces}\label{clalg}

 We briefly recall cluster algebras with principal coefficients \cite{FZ07}. For that, we need to prepare some notations. Let $n \in \bZ_{\ge 0}$ and $\cF:={\mathbb Q}(t_1,\ldots,t_{2n})$ be the field of rational functions in $2n$ variables over ${\mathbb Q}$.

\begin{definition}
 $(1)$ A {\it seed with coefficients} is a pair $(\x,Q)$ consisting of the following data:
\begin{itemize}
 \item[(a)] $\x=(x_1,\ldots,x_n,y_1,\ldots,y_n)$ is a free generating set of $\cF$ over $\mathbb{Q}$.
 \item[(b)] $Q$ is a quiver without loops and $2$-cycles whose vertices are $\{1,\ldots,2n\}$.
\end{itemize}
 Then we refer to $\x$ as the {\it cluster}, to each $x_i$ as a {\it cluster variable} and $y_i$ as a {\it coefficient}.

 $(2)$ For a seed $(\x,Q)$ with coefficients, the {\it mutation $\mu_k(\x,Q)=(\x',Q')$ in direction $k$} $(1 \le k \le n)$ is defined as follows:
\begin{itemize}
 \item[(a)] $\x'=(x'_1,\ldots,x'_n,y_1,\ldots,y_n)$ is defined by
 \begin{equation*}
  x_k x'_k = \prod_{(j \rightarrow k)\text{ in }Q}x_j\prod_{(j \rightarrow k)\text{ in }Q}y_{j-n}+\prod_{(j \leftarrow k)\text{ in }Q}x_j\prod_{(j \leftarrow k)\text{ in }Q}y_{j-n} \ \ \text{and} \ \ x'_i = x_i \ \ \text{if} \ \ i \neq k,
 \end{equation*}
where $x_{n+1}=\cdots=x_{2n}=1=y_{1-n}=\cdots=y_0$.
 \item[(b)] $Q'$ is the quiver obtained from $Q$ by the following steps:\par
 \begin{itemize}
  \item[(i)] For any path $i \rightarrow k \rightarrow j$, add an arrow $i \rightarrow j$.
  \item[(ii)] Reverse all arrows incident to $k$.
  \item[(iii)] Remove a maximal set of disjoint $2$-cycles.
 \end{itemize}
\end{itemize}
\end{definition}

We remark that $\mu_k$ is an involution, that is, we have $\mu_k\mu_k(\x,Q)=(\x,Q)$. Moreover, it is elementary that $\mu_k(\x,Q)$ is also a seed with coefficients.

 For a quiver $Q$ without loops and $2$-cycles whose vertices are $\{1,\ldots,n\}$. The {\it framed quiver} associated with $Q$ is the quiver $\hat{Q}$ obtained from $Q$ by adding vertices $\{1',\ldots,n'\}$ and arrows $\{i \rightarrow i' \mid 1 \le i \le n\}$. We fix a seed $(\x=(x_1,\ldots,x_n,y_1,\ldots,y_n),\hat{Q})$ with coefficients, called the {\it initial seed}. We also call each $x_i$ the {\it initial cluster variable}.

\begin{definition}
 The {\it cluster algebra} $\cA(Q)=\cA(\x,\hat{Q})$ {\it with principal coefficients} for the initial seed $(\x,\hat{Q})$ is a $\bZ$-subalgebra of $\cF$ generated by the cluster variables and the coefficients obtained by all sequences of mutations from $(\x,\hat{Q})$.
\end{definition}

 One of the remarkable properties of cluster algebras with principal coefficients is the strongly Laurent phenomenon \cite[Proposition 3.6]{FZ07}, that is, $\cA(Q) \subseteq \bZ[x_1^{\pm 1},\ldots,x_n^{\pm 1},y_1,\ldots,y_n]$. We consider the $\bZ^n$-grading in $\bZ[x_1^{\pm 1},\ldots,x_n^{\pm 1},y_1,\ldots,y_n]$ given by
\[
 \deg(x_i)=e_i,\ \ \deg(y_j)=(\#\{i \rightarrow j\text{ in }Q\}-\#\{i \leftarrow j\text{ in }Q\})_{1 \le i \le n},
\]
where $e_1,\ldots,e_n$ are the standard basis vectors in $\bZ^n$. Every cluster variable $x$ of $\cA(Q)$ is homogeneous with respect to the $\bZ^n$-grading, and its degree $g_Q(x)$ is called {\it $g$-vector} of $x$ \cite[Proposition 6.1]{FZ07}. We denote by $\clus Q$ the set of clusters in $\cA(Q)$ and by $\clv Q$ the set of cluster variables in $\cA(Q)$.

 Let $T$ be a tagged triangulation of $(S,M)$. Fomin, Shapiro and Thurston \cite{FST} constructed a quiver $Q_T$ without loops and $2$-cycles as follows: Any tagged triangulation is obtained by gluing together a number of puzzle pieces in Table \ref{QT} and by simultaneous changing all tags at some punctures (see \cite[Remark 4.2]{FST} for details). The vertices of $Q_T$ are arcs of $T$ and its arrows are obtained as in Table \ref{QT} for puzzle pieces of $T$, where we remove arrows incident to $\partial S$.
\begin{table}[ht]
\centering
\begin{tabular}{c|c|c|c|c}
\begin{tikzpicture}[baseline=-9mm]
 \node at(0,0.3) {Puzzle}; \node at(0,-0.3) {pieces};
\end{tikzpicture}
   &
\begin{tikzpicture}[baseline=-3mm]
 \coordinate (0) at (0,0);
 \coordinate (1) at (120:1.5);
 \coordinate (2) at (180:1.5);
 \draw (0) to node[right]{$\de_2$} (1);
 \draw (1) to node[left]{$\de_1$} (2);
 \draw (0) to node[below]{$\de_3$} (2);
 \fill(0) circle (0.7mm); \fill (1) circle (0.7mm); \fill (2) circle (0.7mm);
\end{tikzpicture}
   &
\begin{tikzpicture}[baseline=-20mm]
 \coordinate (0) at (0,0);
 \coordinate (1) at (0,-1);
 \coordinate (2) at (0,-2);
 \draw (2) to [out=180,in=180] node[left]{$\de_1$} (0);
 \draw (2) to [out=0,in=0] node[right]{$\de_2$} (0);
 \draw (1) to node[left=-3]{$\de_3$} (2);
 \draw (1) to [out=60,in=120,relative] node[pos=0.2]{\rotatebox{40}{\footnotesize $\bowtie$}} node[fill=white,inner sep=0.8]{$\de_4$} (2);
 \fill(0) circle (0.7mm); \fill (1) circle (0.7mm); \fill (2) circle (0.7mm);
\end{tikzpicture}
   &
\begin{tikzpicture}[baseline=-22mm,scale=1.2]
 \coordinate (0) at (0,0); \node at (0,-0.3) {$\de_1$};
 \coordinate (1) at (-0.5,-0.8); \fill (1) circle (0.7mm);
 \coordinate (1') at (0.5,-0.8); \fill (1') circle (0.7mm);
 \coordinate (2) at (0,-2); \fill (2) circle (0.7mm);
 \draw (0,-1) circle (1);
 \draw (1) to node[fill=white,inner sep=1]{$\de_2$} (2);
 \draw (1) to [out=-60,in=-120,relative] node[pos=0.2]{\rotatebox{170}{\footnotesize $\bowtie$}} node[fill=white,inner sep=1]{$\de_3$} (2);
 \draw (1') to node[fill=white,inner sep=1]{$\de_4$} (2);
 \draw (1') to [out=60,in=120,relative] node[pos=0.2]{\rotatebox{180}{\footnotesize $\bowtie$}} node[fill=white,inner sep=1]{$\de_5$} (2);
\end{tikzpicture}
   &
\begin{tikzpicture}[baseline=-8mm,scale=1.1]
 \coordinate (0) at (0,0);
 \coordinate (u) at (90:1);
 \coordinate (r) at (-30:1);
 \coordinate (l) at (210:1);
 \draw (0) to node[left]{$\de_1$} (u);
 \draw (0) to [out=-60,in=-120,relative] node[pos=0.8]{\rotatebox{20}{\footnotesize $\bowtie$}} node[right]{$\de_1'$} (u);
 \draw (0) to node[right=5,above=-3]{$\de_2$} (r);
 \draw (0) to [out=-60,in=-120,relative] node[pos=0.8]{\rotatebox{100}{\footnotesize $\bowtie$}} node[below]{$\de_2'$} (r);
 \draw (0) to node[below=7,right=-5]{$\de_3$} (l);
 \draw (0) to [out=-60,in=-120,relative] node[pos=0.8]{\rotatebox{-30}{\footnotesize $\bowtie$}} node[left=2,above=-1]{$\de_3'$} (l);
 \fill (0) circle (0.7mm); \fill (u) circle (0.7mm); \fill (l) circle (0.7mm); \fill (r) circle (0.7mm);
\end{tikzpicture}
\\\hline
\begin{tikzpicture}[baseline=-5mm]
 \node at(0,0.3) {Corresponding}; \node at(0,-0.3) {quivers};
\end{tikzpicture}
   &
\begin{tikzpicture}[baseline=2mm]
 \node (3) at (0,0) {$\de_3$};
 \node (2) at (60:1.5) {$\de_2$};
 \node (1) at (120:1.5) {$\de_1$};
 \draw[->] (3) -- (2); \draw[->] (2) -- (1); \draw[->] (1) -- (3);
\end{tikzpicture}
   &
\begin{tikzpicture}[baseline=0mm]
 \node (3) at (0,0.5) {$\de_3$};
 \node (4) at (0,-0.3) {$\de_4$};
 \node (2) at (60:1.5) {$\de_2$};
 \node (1) at (120:1.5) {$\de_1$};
 \draw[->] (3) -- (2); \draw[->] (2) -- (1); \draw[->] (1) -- (3);
 \draw[->] (4) -- (2); \draw[->] (1) -- (4);
\end{tikzpicture}
   &
\begin{tikzpicture}[baseline=-14mm,scale=1.2]
 \node (1) at (0,0) {$\de_1$};
 \node (2) at (-0.7,-0.7) {$\de_2$};
 \node (3) at (-0.5,-1.5) {$\de_3$};
 \node (4) at (0.7,-0.7) {$\de_4$};
 \node (5) at (0.5,-1.5) {$\de_5$};
 \draw[->] (1) -- (2); \draw[->] (2) -- (4); \draw[->] (4) -- (1);
 \draw[->] (1) -- (3); \draw[->] (3) -- (4);
 \draw[->] (2) -- (5); \draw[->] (5) -- (1); \draw[->] (3) -- (5);
\end{tikzpicture}
   &
\begin{tikzpicture}[baseline=-3mm]
 \node (1) at (90:0.5) {$\de_1$};
 \node (2) at (90:1.2) {$\de_1'$};
 \node (3) at (-30:0.5) {$\de_2$};
 \node (4) at (-30:1.2) {$\de_2'$};
 \node (5) at (210:0.5) {$\de_3$};
 \node (6) at (210:1.2) {$\de_3'$};
 \draw[->] (1) -- (3); \draw[->] (3) -- (5); \draw[->] (5) -- (1);
 \draw[->] (2) to [out=40,in=140,relative] (4); \draw[->] (4) to [out=40,in=140,relative] (6); \draw[->] (6) to [out=40,in=140,relative] (2);
 \draw[->] (2) to [out=40,in=140,relative] (3); \draw[->] (4) to [out=40,in=140,relative] (5); \draw[->] (6) to [out=40,in=140,relative] (1);
 \draw[->] (1) to [out=40,in=140,relative] (4); \draw[->] (3) to [out=40,in=140,relative] (6); \draw[->] (5) to [out=40,in=140,relative] (2);
\end{tikzpicture}
\end{tabular}\vspace{3mm}
\caption{Puzzle pieces and the corresponding quivers}\label{QT}
\end{table}
 Thus we have the cluster algebra $\cA(Q_T)$ associated with $T$.

 We denote by $\bT_T$ the set of tagged triangulations of $(S,M)$ obtained from $T$ by sequences of flips, and by $\bA_T$ the set of tagged arcs of each tagged triangulation contained in $\bT_T$. Cluster algebras defined from triangulated surfaces have the following properties.

\begin{theorem}\label{bijtc}
 Let $T$ be a tagged triangulation of $(S,M)$.
\begin{itemize}
 \item[(1)] \cite[Theorem 7.11]{FST}\cite[Theorem 6.1]{FoT} There is a bijection
\[
 x_{(-)} : \bA_T \longleftrightarrow \clv Q_T.
\]
 Moreover, it induces a bijection
\[
 x_{(-)} : \bT_T \longleftrightarrow \clus Q_T
\]
which sends $T$ to the initial cluster in $\cA(Q_T)$ and commutes with flips and mutations.
 \item[(2)] \cite[Theorem 10.0.5]{Lab10}\cite[Theorem 7.1]{Lab09b}\cite[Proposition 5.2]{Re14b} For each $\de \in \bA_T$, we have
\[
 -b_T(\e(\de)) = g_{Q_T}(x_{\de}).
\]
\end{itemize}
\end{theorem}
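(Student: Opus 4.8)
For Part~(1)---the foundational dictionary of Fomin--Shapiro--Thurston and of Fomin--Thurston---the plan is to reduce everything to the local statement that \emph{flips of tagged triangulations realise mutations of the associated quivers}: if $T'\in\bT_T$, if $\de$ is an arc of $T'$, and if $T''$ is obtained from $T'$ by flipping $\de$, then $Q_{T''}=\mu_{\de}(Q_{T'})$ (mutation at the vertex corresponding to $\de$). I would prove this by the finite case analysis packaged in Table~\ref{QT}: every tagged triangulation is a gluing of the listed puzzle pieces followed by simultaneous tag changes at some punctures (which do not affect $Q_{(-)}$), a flip is supported in the at most two puzzle pieces adjacent to $\de$, and one checks in each of the finitely many local configurations that the change of $Q_{(-)}$ agrees with the combinatorial mutation rule; the same inspection shows each $Q_{T'}$ is free of loops and $2$-cycles. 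Granting this, a sequence of flips $T=T_0,\dots,T_m=T'$ together with the parallel sequence of mutations produces a seed, yielding a surjection $\bT_T\twoheadrightarrow\clus Q_T$ that sends $T$ to the initial seed and commutes with flips and mutations; it descends to a well-defined map $\bA_T\to\clv Q_T$ on arcs because the set of tagged triangulations containing a fixed tagged arc $\de$ is itself flip-connected, so the cluster variable attached to $\de$ does not depend on how $\de$ is completed to a triangulation. The real content is then the \emph{injectivity} of $\bA_T\to\clv Q_T$; for this I would follow Fomin--Thurston and realise $\cA(Q_T)$ inside the ring of functions on a decorated Teichm\"uller space, with each tagged arc sent to its $\lambda$-length: these are honest functions on the (decorated) Teichm\"uller space, so distinct tagged arcs give distinct functions, while the Ptolemy relations among $\lambda$-lengths are exactly the exchange relations. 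Injectivity---hence bijectivity---follows, and the assertions about the bijection $\bT_T\longleftrightarrow\clus Q_T$ then drop out of the flip/mutation compatibility.

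For Part~(2) the plan is induction on the flip-distance of a tagged triangulation from $T$, checking that $\de\mapsto-b_T(\e(\de))$ and $\de\mapsto g_{Q_T}(x_{\de})$ obey the same recursion and agree on $T$. Base case, $\de\in T$: then $x_{\de}$ is an initial cluster variable, so $g_{Q_T}(x_{\de})=e_{\de}$ by definition of the $\bZ^{|T|}$-grading, while unwinding the definition of shear coordinates (Figure~\ref{SZ}, through the reduction along $\iota$ to an ideal triangulation) shows that, with the chosen convention for $\e$, the laminate $\e(\de)$ contributes only along $\de$, so that $b_T(\e(\de))=-e_{\de}$---exactly as Example~\ref{ex1} exhibits. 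Inductive step: flip $T'\in\bT_T$ at an arc $\de$, obtaining $T''$ whose unique new arc is $\de'$; then $g_{Q_T}(x_{\de'})$ is determined from the $g$-vectors $\{g_{Q_T}(x_{\g})\mid\g\in T'\}$ by the standard piecewise-linear mutation rule for $g$-vectors, while $b_T(\e(\de'))$ is determined from the shear coordinates $\{b_T(\e(\g))\mid\g\in T'\}$ by the analogous transformation rule for shear coordinates of elementary laminates under a flip---the lamination incarnation of $c$-vector mutation (equivalently, of $\hat y$-mutation), which is dual to $g$-mutation in the sense of ``tropical duality'' and which is worked out for surfaces by Labardini-Fragoso and by Reading. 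Since the two recursions coincide and the two families agree on $T$, the identity $-b_T(\e(\de))=g_{Q_T}(x_{\de})$ propagates to all of $\bA_T$.

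The main obstacle is the injectivity in Part~(1): the surface and quiver combinatorics by themselves only produce the exchange relations, and excluding any further coincidences among cluster variables is precisely where one must bring in an external model---the $\lambda$-length (Teichm\"uller) description above, or else the representation theory of a Jacobian algebra $J(Q_T,W)$ together with its string and band modules. A secondary difficulty, relevant both there and for the matching of recursions in Part~(2), is the bookkeeping at punctures---self-folded triangles in the underlying ideal triangulation, and simultaneous tag changes---which is handled by the conventions set up in Subsection~\ref{lami}.
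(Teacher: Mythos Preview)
The paper does not give its own proof of this theorem: both parts are stated as citations of results in the literature (Part~(1) to \cite{FST} and \cite{FoT}, Part~(2) to \cite{Lab10}, \cite{Lab09b}, and \cite{Re14b}), and the paper only adds the remark that Part~(2) can alternatively be obtained from the cluster expansion formula of \cite{Y18b}. There is therefore no in-paper argument to compare against.

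That said, your sketch is a reasonable summary of the strategies actually used in those references. For Part~(1), the flip/mutation compatibility is indeed the local case analysis of \cite{FST}, and your identification of injectivity as the nontrivial point, resolved via the $\lambda$-length realisation in decorated Teichm\"uller space, is precisely the content of \cite{FoT}. For Part~(2), the inductive matching of the $g$-vector recursion with the shear-coordinate recursion under flips is in the spirit of \cite{Re14b}; the base case $b_T(\e(\de))=-e_{\de}$ for $\de\in T$ is exactly the sign-convention check you describe. One small caveat: in the inductive step you invoke ``tropical duality'' somewhat loosely---what is actually needed is that shear coordinates of elementary laminates transform under flips by the same piecewise-linear rule as (minus) $g$-vectors do under mutation, and this is the surface-specific computation carried out in the cited papers rather than a formal consequence of general tropical duality. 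If you were writing this up in full you would need to make that step explicit; as a proof sketch pointing to the literature, what you have is adequate.
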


 Note that, in another way, Theorem \ref{bijtc}(2) can be directly given by the cluster expansion formula in \cite{Y18b}. Moreover, it was proved in \cite[Theorem 8.6]{FeT} for orbifolds in the same way as \cite[Proposition 5.2]{Re14b}.

\subsection{Proof of Theorem \ref{gfan}}

 We recall the following notion to prove Theorem \ref{gfan}.

\begin{definition}\cite{BQ}
 Let $(S,M)$ be an arbitrary marked surface. The {\it tagged rotation} of a tagged arc $\de$ of $(S,M)$ is the tagged arc $\rho(\de)$ defined as follows:
\begin{itemize}
 \item If $\de$ has an endpoint $o$ on a component $C$ of $\partial S$, then $\rho(\de)$ is obtained from $\de$ by moving $o$ to the next marked point on $C$ in the counterclockwise direction;
 \item If $\de$ has an endpoint at a puncture $p$, then $\rho(\de)$ is obtained from $\de$ by changing its tags at $p$.
\end{itemize}
\end{definition}

 By Theorem \ref{flip}, we have
\begin{equation}\label{bT}
 \bT =  \left\{
     \begin{array}{ll}
   \bT_T \sqcup \bT_{\rho T} & \text{if $(S,M)$ is a closed surface with exactly one puncture},\\
   \bT_T & \text{otherwise}.
     \end{array} \right.
\end{equation}

 Let $S^{\ast}$ be the same surface as $S$ oriented in the opposite direction and $M^{\ast}=M$. For a tagged arc or laminate $\g$ of $(S,M)$, we denote by $\g^{\ast}$ the corresponding one of $(S^{\ast},M^{\ast})$. In particular, the tagged triangulation $T^{\ast}$ of $(S^{\ast},M^{\ast})$ is naturally induced by $T \in \bT$ and we have $Q_{T^{\ast}}=Q_T^{\rm op}$. By Theorem \ref{bijtc}(1), the composition of maps $\rho^{-1}(-)$, $(-)^{\ast}$ and $x_{(-)}$ gives a bijection
\[
 x_{(\rho^{-1}(-))^{\ast}} : \bA_{\rho T} \longleftrightarrow \clv Q_T^{\rm op}.
\]
 Moreover, it induces a bijection
\[
 x_{(\rho^{-1}(-))^{\ast}} : \bT_{\rho T} \longleftrightarrow \clus Q_T^{\rm op}
\]
 which sends $\rho T$ to the initial cluster in $\cA(Q_T^{\rm op})$ and commutes with flips and mutations.

\begin{theorem}\label{Qop}
 Let $T$ be a tagged triangulation of $(S,M)$. Then for each $\de \in \bA_{\rho T}$, we have
\[
 b_T(\e(\de))=g_{Q_T^{\rm op}}(x_{(\rho^{-1}(\de))^{\ast}}).
\]
\end{theorem}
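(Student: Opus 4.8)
The plan is to deduce Theorem \ref{Qop} from Theorem \ref{bijtc}(2) applied to the orientation-reversed surface $(S^{\ast},M^{\ast})$ and its tagged triangulation $T^{\ast}$, for which $Q_{T^{\ast}}=Q_T^{\rm op}$. The bridge will consist of two elementary compatibilities under the mirror operation $(-)^{\ast}$: one for shear coordinates and one for the map $\e$.

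First I would record how shear coordinates behave under orientation reversal: for every laminate $\ell$ of $(S,M)$ and every $\de\in T$,
\[
 b_{\de^{\ast},T^{\ast}}(\ell^{\ast})=-b_{\de,T}(\ell),
\]
which under the canonical identification $\bZ^{|T|}=\bZ^{|T^{\ast}|}$ reads $b_{T^{\ast}}(\ell^{\ast})=-b_T(\ell)$, equivalently $b_T(\nu^{\ast})=-b_{T^{\ast}}(\nu)$ for a laminate $\nu$ of $(S^{\ast},M^{\ast})$. This is immediate from the definitions in Subsection \ref{lami}: reflecting the local picture in Figure \ref{SZ} interchanges the $+1$ and $-1$ configurations, so every contribution changes sign, and the reductions used for arcs inside self-folded triangles and for arbitrary tagged triangulations (reversing spiral directions at a puncture, simultaneously changing all tags at some punctures) commute with $(-)^{\ast}$, so the sign propagates.

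Next I would check that $\e$ intertwines $(-)^{\ast}$ with the tagged rotation. Since tags are orientation independent, $\de^{\ast}$ has the same tags as $\de$, and the tagged rotation of $(S^{\ast},M^{\ast})$ satisfies $\rho(\de^{\ast})=(\rho^{-1}(\de))^{\ast}$: moving a boundary endpoint clockwise on $S$ means moving it counterclockwise on $S^{\ast}$, while flipping a tag at a puncture is the same operation on either surface. Comparing the conventions of Figure \ref{elelam} on the two surfaces then yields
\[
 \e(\de)^{\ast}=\e\bigl((\rho^{-1}(\de))^{\ast}\bigr).
\]
Indeed, near a boundary endpoint $o$ the end of $\e(\de)$ sits just clockwise of $o$ on $S$, hence just counterclockwise of $o$ on $S^{\ast}$, which is on the same boundary segment as the end sitting just clockwise on $S^{\ast}$ of the next marked point counterclockwise from $o$; near a puncture $p$ the spiral of $\e(\de)$ reverses its sense under $(-)^{\ast}$, which matches exactly the tag change at $p$ produced by the tagged rotation.

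Finally I would assemble these. For $\de\in\bA_{\rho T}$ one has $\rho^{-1}(\de)\in\bA_T$ (as already used before the statement; when $(S,M)$ is a closed surface with exactly one puncture this relies on $\rho^{2}=\mathrm{id}$ and on $\rho$ commuting with flips, together with \eqref{bT}), hence $(\rho^{-1}(\de))^{\ast}\in\bA_{T^{\ast}}$. Taking the mirror of the last display, then applying the orientation-reversal identity, and finally Theorem \ref{bijtc}(2) for $(S^{\ast},M^{\ast})$ and $T^{\ast}$, we obtain
\[
 b_T(\e(\de))=b_T\Bigl(\bigl(\e((\rho^{-1}(\de))^{\ast})\bigr)^{\ast}\Bigr)=-b_{T^{\ast}}\bigl(\e((\rho^{-1}(\de))^{\ast})\bigr)=g_{Q_{T^{\ast}}}\bigl(x_{(\rho^{-1}(\de))^{\ast}}\bigr)=g_{Q_T^{\rm op}}\bigl(x_{(\rho^{-1}(\de))^{\ast}}\bigr),
\]
which is the assertion. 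The one delicate point is the second step: one must follow the clockwise/counterclockwise conventions of the definition of $\e$ on $(S,M)$ and $(S^{\ast},M^{\ast})$ closely enough to see that $(-)^{\ast}$ carries $\e(\de)$ to the elementary laminate of the $\rho$-rotated arc and not of $\de^{\ast}$ itself; once that is settled, the remaining signs are forced by the local pictures.
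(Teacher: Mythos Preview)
Your proof is correct and follows essentially the same route as the paper: reduce to Theorem \ref{bijtc}(2) on $(S^{\ast},M^{\ast})$ via the two compatibility identities $b_{T^{\ast}}(\ell^{\ast})=-b_T(\ell)$ and $\e(\de)^{\ast}=\e((\rho^{-1}(\de))^{\ast})$, then use $Q_{T^{\ast}}=Q_T^{\rm op}$. The paper simply asserts these identities in one line (writing $b_T(\e(\rho(\de)))=b_T((\e(\de^{\ast}))^{\ast})=-b_{T^{\ast}}(\e(\de^{\ast}))$ for $\de\in\bA_T$), whereas you supply the local justification; the substance is identical.
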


\begin{proof}
 For a tagged arc $\de$ of $(S,M)$, the equalities
\[
 b_T(\e(\rho(\de))) = b_T((\e(\de^{\ast}))^{\ast}) = -b_{T^{\ast}}(\e(\de^{\ast}))
\]
hold. Since $Q_{T^{\ast}}=Q_T^{\rm op}$, Theorem \ref{bijtc}(2) gives
\[
 -b_{T^{\ast}}(\e(\de^{\ast})) = g_{Q_{T^{\ast}}}(x_{\de^{\ast}}) = g_{Q_T^{\rm op}}(x_{\de^{\ast}})
\]
for $\de \in \bA_T$, hence $b_T(\e(\rho(\de)))=g_{Q_T^{\rm op}}(x_{\de^{\ast}})$.
\end{proof}

\begin{proof}[Proof of Theorem \ref{gfan}]
 By Theorems \ref{bijtc} and \ref{Qop}, we have
\[
 \bigcup_{T' \in \bT_T}C_T(\e(T')) = \bigcup_{\x \in \clus Q_T}\bigl(-C_{Q_T}(\x)\bigr)\ \ \text{and}\ \ \bigcup_{T' \in \bT_{\rho T}}C_T(\e(T')) = \bigcup_{\x \in \clus Q_T^{\rm op}}C_{Q_T^{\rm op}}(\x).
\]
 If $(S,M)$ is a closed surface with exactly one puncture, then $\bT_T$ and $\bT_{\rho T}$ coincide with $\bT^+$ and $\bT^-$ in Theorem \ref{main}, respectively. Therefore, the assertion follows from Theorem \ref{main} and \eqref{bT}.
\end{proof}

\subsection{Example for a cluster algebra}\label{excl}

 For the tagged triangulation $T$ in Subsection \ref{Kro}, the quiver $Q_T$ is the Kronecker quiver $1 \leftleftarrows 2$. The set $\clus Q_T$ is described by
\[
\xymatrix@C=7mm@R=5mm{
 (x_1,x_2) \ar@{-}[r] \ar@{-}[d] & \biggl(\cfrac{x_2^2+y_1}{x_1},x_2\biggr) \ar@{-}[r] & \biggl(\cfrac{x_2^2+y_1}{x_1},\cfrac{x_2^4+y_1^2y_2x_1^2+2y_1x_2^2+y_1^2}{x_1^2x_2}\biggr) \ar@{-}[r] & \cdots
\\
 \biggl(x_1,\cfrac{y_2x_1^2+1}{x_2}\biggr) \ar@{-}[r] & \biggl(x',\cfrac{y_2x_1^2+1}{x_2}\biggr) \ar@{-}[r] & \biggl(x',x''\biggr) \ar@{-}[r] & \cdots
}
\]
where
\[
 x' = \cfrac{y_1y_2^2x_1^4+2y_1y_2x_1^2+x_2^2+y_1}{x_1x_2^2},\ x'' = \cfrac{y_1^2y_2^3x_1^6+3y_1^2y_2^2x_1^4+2y_1y_2x_1^2x_2^2+x_2^4+3y_1^2y_2x_1^2+2y_1x_2^2+y_1^2}{x_1^2x_2^3}.
\]
 The corresponding $g$-vectors are as follows:
\[
\xymatrix@C=7mm@R=5mm{
 (1,0), (0,1) \ar@{-}[r] \ar@{-}[d] & (-1,2), (0,1) \ar@{-}[r] & (-1,2), (-2,3) \ar@{-}[r] & \cdots
\\
 (1,0), (0,-1) \ar@{-}[r] & (-1,0), (0,-1) \ar@{-}[r] & (-1,0), (-2,1) \ar@{-}[r] & \cdots
}
\]
 The $g$-vector cones of clusters are reflections of the corresponding shear coordinate cones in Subsection \ref{Kro} as follows:
\[
\begin{tikzpicture}[baseline=0mm,scale=1.2]
 \coordinate (0) at (0,0); \coordinate (x) at (1,0); \coordinate (-x) at (-1,0);
 \coordinate (y) at (0,1); \coordinate (-y) at (0,-1);
 \draw[->] (0)--(x); \draw (0)--(-x); \draw[->] (0)--(y); \draw (0)--(-y);
 \draw (-1,0.5)--(0)--(-0.5,1) (-1,0.75)--(0)--(-0.75,1) (-1,0.875)--(0)--(-0.875,1) (-1,0.9375)--(0)--(-0.9375,1) (-1,0.96875)--(0)--(-0.96875,1);
\end{tikzpicture}
\]

\section{Representation theory}\label{Rep}

\subsection{$\tau$-tilting theory and cluster tilting theory}

 In this subsection, we recall $\tau$-tilting and cluster tilting theory to prepare for the proofs of Theorem \ref{gtame} and Corollary \ref{connect}.

 First, we recall $\tau$-tilting theory \cite{AIR}. Let $\Lambda$ be a finite dimensional algebra over a field. We denote by $\Mod \Lambda$ (resp., $\proj \Lambda$) the category of finitely generated (resp., finitely generated projective) left $\Lambda$-modules. We denote by $\tau$ the Auslander-Reiten translation of $\Mod \Lambda$ and by $|M|$ is the number of non-isomorphic indecomposable direct summands of $M \in \Mod \Lambda$. Let $M \in \Mod \Lambda$ and $P \in \proj \Lambda$. We say that a pair $(M,P)$ is

\begin{itemize}
 \item {\it $\tau$-rigid} if $\Hom_{\Lambda}(M,\tau M)=0=\Hom_{\Lambda}(P,M)$;
 \item {\it $\tau$-tilting} if $(M,P)$ is $\tau$-rigid and $|\Lambda|=|M|+|P|$
 \item {\it basic} if $M$ and $P$ are basic;
 \item a {\it direct summand} of $(M',P') \in \Mod \Lambda \times \proj \Lambda$ if $M$ is a direct summand of $M'$ and $P$ is a direct summand of $P'$;
 \item {\it indecomposable} if $(M,P)$ is basic and $|M|+|P|=1$.
\end{itemize}

\noindent Recall that we denote by $\sttilt \Lambda$ the set of isomorphism classes of basic $\tau$-tilting pairs in $\Mod \Lambda$. For $N \in \sttilt \Lambda$ and an indecomposable direct summand $N'$ of $N$, there is a unique indecomposable $\tau$-rigid pair $N''$ such that $N/N' \oplus N'' \in \sttilt\Lambda$ \cite[Theorem 0.4]{AIR}. Therefore, one can define mutations in $\sttilt\Lambda$.

 Let $\Lambda = \bigoplus_{i=1}^{n}P_i$ be a decomposition of $\Lambda$, where $P_i$ is an indecomposable projective $\Lambda$-module. Then $[P_1],\ldots,[P_n]$ form a basis for $K_0(\proj \Lambda)$, thus there is a natural bijection between $K_0(\proj \Lambda)$ and $\bZ^n$. Let $M \in \Mod \Lambda$. There is a minimal projective presentation of $M$
\[
 P^1 \rightarrow P^0 \rightarrow M \rightarrow 0.
\]
 We set
\[
 g_{\Lambda}(M) := [P^0]-[P^1] \in K_0(\proj \Lambda) \simeq \bZ^n,
\]
called the {\it $g$-vector of $M$}. We denote by $\trigid\Lambda$ the set of isomorphism classes of indecomposable $\tau$-rigid pairs in $\Mod \Lambda$. The {\it $g$-vector of $(M,P) \in \trigid\Lambda$} is $g_{\Lambda}(M,P):=g_{\Lambda}(M)-g_{\Lambda}(P)$.

 For our aim, we also need to consider the opposite algebra $\Lambda^{\rm op}$ of $\Lambda$. For $M \in \Mod \Lambda$, the notation $\Tr M$ denotes the transpose of $M$. We define $(-)^{\ast}:=\Hom_{\Lambda}(-,\Lambda) : \proj\Lambda \longleftrightarrow \proj\Lambda^{\rm op}$. Then $(-)^{\ast}$ gives $K_0(\proj \Lambda) \simeq K_0(\proj \Lambda^{\rm op})$.

\begin{theorem}\cite[Theorem 2.14]{AIR}\cite[Subsection 3.4]{F}\label{varphi}
 There is a bijection
\[
 \varphi : \trigid\Lambda \longleftrightarrow \trigid\Lambda^{\rm op}
\]
given by $(M,P) \mapsto (\Tr M \oplus P^{\ast},M_{\rm pr})$ such that
\[
 g_{\Lambda}(M,P)=-g_{\Lambda^{\rm op}}(\Tr M \oplus P^{\ast},M_{\rm pr}),
\]
 where $M_{\rm pr}$ is a maximal projective direct summand of $M$. The map $\varphi$ induces a bijection
\[
 \varphi : \sttilt\Lambda \longleftrightarrow \sttilt\Lambda^{\rm op}
\]
which sends $(\Lambda,0)$ to $(0,\Lambda)$ and commutes with mutations.
\end{theorem}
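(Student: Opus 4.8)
The plan is to reduce the various assertions (the bijection on indecomposable $\tau$-rigid pairs, the $g$-vector identity, the restriction to $\tau$-tilting pairs, the image of $(\Lambda,0)$, and compatibility with mutation) to a direct computation with transposes and minimal projective presentations, together with the translation into $2$-term silting complexes. First I would collect the standard facts about the transpose: if $P^1\xrightarrow{f}P^0\to M\to 0$ is a minimal projective presentation of $M\in\Mod\Lambda$, then $\Tr M=\mathrm{coker}(f^{\ast})\in\Mod\Lambda^{\rm op}$; the transpose induces a duality between the projectively stable categories of $\Lambda$ and $\Lambda^{\rm op}$, with $\Tr\Tr M\cong M$ whenever $M$ has no projective summand, with $\Tr M$ never having a projective summand, and with $\Tr$ preserving the number of indecomposable non-projective summands. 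Writing $M=\overline M\oplus M_{\rm pr}$ with $M_{\rm pr}$ the maximal projective summand, these facts show at once that $\varphi$ is well defined (that $(\Tr M\oplus P^{\ast},M_{\rm pr})$ is again $\tau$-rigid over $\Lambda^{\rm op}$ is exactly \cite[Theorem 2.14]{AIR}), that $\varphi_{\Lambda^{\rm op}}\circ\varphi_{\Lambda}=\mathrm{id}$ — since $\Tr\Tr\overline M\cong\overline M$, $(M_{\rm pr})^{\ast\ast}\cong M_{\rm pr}$, $(P^{\ast})^{\ast}\cong P$, and $\Tr M$ carries no projective summand so $M_{\rm pr}$ is correctly recovered — and hence that $\varphi$ is a bijection preserving indecomposability.

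For the $g$-vector identity I would apply $(-)^{\ast}$ to the presentation of $\overline M$: the sequence $(P^0_{\overline M})^{\ast}\xrightarrow{f^{\ast}}(P^1_{\overline M})^{\ast}\to\Tr M\to 0$ is a projective presentation of $\Tr M$, so under the identification $K_0(\proj\Lambda)\simeq K_0(\proj\Lambda^{\rm op})$ induced by $(-)^{\ast}$ one gets $g_{\Lambda^{\rm op}}(\Tr M)=-g_{\Lambda}(M)+[M_{\rm pr}]$; adding the contribution $[P^{\ast}]$ from the module summand $P^{\ast}$ and subtracting the contribution $[M_{\rm pr}]$ from the projective slot then yields $-g_{\Lambda^{\rm op}}(\Tr M\oplus P^{\ast},M_{\rm pr})=g_{\Lambda}(M)-[P]=g_{\Lambda}(M,P)$, which is the claimed equality. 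For the second bijection, $\tau$-tilting is preserved because $|\Tr M\oplus P^{\ast}|+|M_{\rm pr}|=(|M|-|M_{\rm pr}|)+|P|+|M_{\rm pr}|=|M|+|P|=|\Lambda|=|\Lambda^{\rm op}|$, and $(\Lambda,0)\mapsto(\Tr\Lambda\oplus 0,\Lambda)=(0,\Lambda^{\rm op})$ because $\Tr\Lambda=0$.

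The part I expect to be the main obstacle is commutativity with mutation. Here the plan is to pass through \cite[Theorem 3.2]{AIR}: sending a $\tau$-rigid pair to the $2$-term complex given by the minimal projective presentation of the module part together with the projective part in degree $-1$ is a bijection onto $2$-term presilting complexes in $K^b(\proj\Lambda)$, it restricts to a bijection $\sttilt\Lambda\leftrightarrow\{2\text{-term silting complexes}\}$, and it intertwines mutation of $\tau$-tilting pairs with silting mutation. Under this translation $\varphi$ becomes the duality $T\mapsto T^{\ast}[1]$ from $2$-term presilting complexes over $\Lambda$ to those over $\Lambda^{\rm op}$; one checks directly that $T^{\ast}[1]$ is again $2$-term presilting, that its $0$-th homology is $\Tr M\oplus P^{\ast}$, and that its direct summands of the form $Q[1]$ recover $M_{\rm pr}$. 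Since $(-)^{\ast}$ is a duality it interchanges left and right silting mutation, and because the mutation producing the next $\tau$-tilting pair is precisely whichever of the two stays $2$-term — a property preserved by $T\mapsto T^{\ast}[1]$ — it follows that $\varphi$ commutes with mutation. Alternatively, this compatibility is already recorded in \cite[Theorem 2.14]{AIR} and \cite[Subsection 3.4]{F}, which one may simply cite.
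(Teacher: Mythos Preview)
The paper does not give a proof of this theorem at all: it is stated with the citations \cite[Theorem 2.14]{AIR} and \cite[Subsection 3.4]{F} and used as a black box. Your proposal therefore goes strictly beyond what the paper does, essentially sketching the arguments from those references; the computations you outline (transpose duality on stable categories, the $g$-vector identity via dualizing a minimal presentation, the summand count for preservation of $\tau$-tilting, and the passage to $2$-term silting for mutation compatibility) are the standard ones and are correct.
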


 Next, we recall cluster tilting theory in $2$-Calabi-Yau triangulated categories. Let $\cC$ be a Hom-finite Krull-Schmidt $2$-Calabi-Yau triangulated category. We call $X \in \cC$ {\it rigid} if $\Hom_{\cC}(X,X[1])=0$. We denote by $\add U$ the category of all direct summands of finite direct sums of copies of $U$. We call $U \in \cC$ {\it cluster tilting} if $\add U = \{X \in \cC \mid \Hom_{\cC}(U,X[1])=0\}$. We denote by $\rigid\cC$ the set of isomorphism classes of indecomposable rigid objects in $\cC$. Recall that we denote by $\ctilt\cC$ the set of isomorphism classes of basic cluster tilting objects in $\cC$. We assume that $\cC$ has cluster tilting objects, that is, $\ctilt\cC \neq \emptyset$. In this case, any maximal rigid object in $\cC$ is cluster tilting \cite[Theorem 2.6]{ZZ}. Iyama and Yoshino \cite{IY} gave mutations in $\ctilt\cC$ (see also \cite{BMRRT}).

 Let $U = \bigoplus_{i=1}^{n}U_i$ be a decomposition of $U$, where $U_i$ is indecomposable. Then $[U_1],\ldots,[U_n]$ form a basis for $K_0(\add U)$, thus there is a natural bijection between $K_0(\add U)$ and $\bZ^n$. For $U \in \ctilt\cC$ and $X \in \cC$, there is a triangle
\[
 U^1 \rightarrow U^0 \rightarrow X \rightarrow U_1[1],
\]
where $U^1, U^0 \in \add U$. We define
\[
 g_{U}(X) := [U^0]-[U^1] \in K_0(\add U) \simeq \bZ^n,
\]
called the {\it $g$-vector of $X$ with respect to $U$}.

 There is a close relationship between cluster tilting theory and $\tau$-tilting theory as follows.

\begin{theorem}\cite[Theorem 4.1]{AIR}\label{sfE}
 Let $U \in \ctilt\cC$ and $\Lambda = \End_{\cC}(U)^{\rm op}$. Then there is a bijection
\[
 \sH := \Hom_{\cC}(U,-) : \rigid\cC \longleftrightarrow \trigid \Lambda
\]
 such that
\[
 g_{U}(X)=g_{\Lambda}(\sH(X))
\]
 for $X \in \rigid\cC$. Moreover, it induces a bijection
\[
 \sH : \ctilt\cC \longleftrightarrow \sttilt \Lambda
\]
which sends $U$ to $(\Lambda,0)$ and commutes with mutations.
\end{theorem}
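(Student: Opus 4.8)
This statement is \cite[Theorem 4.1]{AIR}, so I only indicate the structure of a proof. \emph{First}, one shows that $\sH=\Hom_{\cC}(U,-)$ kills $\add(U[1])$ and induces an equivalence $\overline{\sH}\colon\cC/[\add(U[1])]\xrightarrow{\ \sim\ }\Mod\Lambda$ from the quotient of $\cC$ by the ideal $[\add(U[1])]$ of morphisms factoring through an object of $\add(U[1])$; this is the $2$-Calabi--Yau analogue, due to Keller--Reiten and to Koenig--Zhu, of Auslander's projectivization (compare \cite{BMRRT}). The functor $\overline{\sH}$ carries $\add U$ onto $\proj\Lambda$, sending the indecomposable summand $U_i$ to the indecomposable projective $P_i$, and it is through this that one identifies $K_0(\add U)\simeq K_0(\proj\Lambda)$ in the statement.

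\emph{Second}, given a rigid object $X$ one writes $X=X_0\oplus X_1$ with $X_1$ a maximal summand of $X$ in $\add(U[1])$ and sends $X$ to the pair $(\sH(X),P)$, where $P\in\proj\Lambda$ corresponds to $X_1[-1]\in\add U$ under the first step (note $\sH(X)=\sH(X_0)$). That this pair is $\tau$-rigid is obtained by decomposing $\Hom_{\cC}(X,X[1])=0$ into its four summand-blocks: rigidity of $U$ forces $\Hom_{\cC}(X_1,X_1[1])=0$ automatically and gives $\Hom_{\cC}(X_1,X[1])\cong\Hom_{\Lambda}(P,\sH(X))$; the $2$-Calabi--Yau duality $\Hom_{\cC}(A,B[2])\cong D\Hom_{\cC}(B,A)$ (with $D$ the $k$-dual) turns $\Hom_{\cC}(X_0,X_1[1])$ into $D\Hom_{\Lambda}(P,\sH(X))$; and a functorial Auslander--Reiten-type isomorphism $\overline{\Hom}_{\cC}(X_0,X_0[1])\cong D\Hom_{\Lambda}(\sH(X),\tau\sH(X))$ identifies the remaining block with the first $\tau$-rigidity condition. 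For the inverse, from a $\tau$-rigid pair $(M,P)$ one takes a minimal projective presentation $P^1\to P^0\to M\to 0$, lifts $P^1\to P^0$ to a morphism in $\add U$ and forms its cone, then adjoins the summand of $\add(U[1])$ prescribed by $P$; one checks this object is rigid and that the assignment is inverse to the previous one on isomorphism classes.

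\emph{Third}, for the $g$-vectors one takes a triangle $U^1\to U^0\to X\to U^1[1]$ with $U^0,U^1\in\add U$ and $U^0\to X$ a minimal right $\add U$-approximation; since $\Hom_{\cC}(U,U^1[1])=0$, applying $\sH$ produces a minimal projective presentation $\sH(U^1)\to\sH(U^0)\to\sH(X)\to 0$, whence $g_U(X)=[U^0]-[U^1]$ corresponds to $[\sH(U^0)]-[\sH(U^1)]=g_{\Lambda}(\sH(X))$, the summands $U_i[1]$ being handled by the triangle $U_i\to 0\to U_i[1]\to U_i[1]$ and the convention $g_{\Lambda}(0,P_i)=-[P_i]$. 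Counting indecomposable summands then upgrades the bijection: by \cite[Theorem 2.6]{ZZ} a rigid $X$ is cluster tilting iff it is maximal rigid iff $|\sH(X)|+|P|=|\Lambda|$, i.e.\ $(\sH(X),P)$ is $\tau$-tilting; since $\sH(U)=\Lambda$ this restricted bijection $\ctilt\cC\leftrightarrow\sttilt\Lambda$ sends $U$ to $(\Lambda,0)$. Finally, the Iyama--Yoshino exchange triangles of \cite{IY} map under $\overline{\sH}$ to the exchange sequences defining $\tau$-tilting mutation in \cite{AIR}, and the two indecomposable complements (in $\cC$, resp.\ in $\Mod\Lambda$) correspond under the second step, so $\sH$ commutes with mutation.

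\emph{The main obstacle} is the functorial Auslander--Reiten isomorphism $\overline{\Hom}_{\cC}(X_0,X_0[1])\cong D\Hom_{\Lambda}(\sH(X),\tau\sH(X))$ used in the second step: it is what genuinely needs the $2$-Calabi--Yau structure together with the equivalence of the first step, and it forces one to work in the quotient category $\cC/[\add(U[1])]$ and to keep careful track of which morphisms do or do not factor through $\add(U[1])$. Once the first step is in place, the $g$-vector claim is essentially formal, and the comparison of mutations reduces to matching the two standard exchange constructions.
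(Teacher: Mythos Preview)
The paper does not give its own proof of this statement: it is quoted verbatim as \cite[Theorem~4.1]{AIR} and used as a black box. Your proposal correctly identifies this and supplies a competent outline of the standard argument (the Keller--Reiten/Koenig--Zhu equivalence $\cC/[\add U[1]]\simeq\Mod\Lambda$, the decomposition $X=X_0\oplus X_1$ with $X_1\in\add U[1]$, the $2$-CY Auslander--Reiten isomorphism for $\tau$-rigidity, and the transport of approximation triangles to minimal projective presentations), which is indeed the approach of \cite{AIR}; there is nothing to compare against in the present paper.
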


 For $\bullet \in \{+,-\}$, we denote by $\rigid^{\bullet}\cC$ (resp., $\trigid^{\bullet}\Lambda$) the set of indecomposable direct summands of an object in $\ctilt^{\bullet}\cC$ (resp., $\sttilt^{\bullet}\Lambda$). Clearly, the map $\sH$ in Theorem \ref{sfE} gives bijections
\[
 \rigid^{\bullet}\cC \longleftrightarrow \trigid^{\bullet}\Lambda\ \text{ and }\  \ctilt^{\bullet}\cC \longleftrightarrow \sttilt^{\bullet}\Lambda.
\]

\subsection{Representation theory and cluster algebras}

 We consider a relationship between representation theory and cluster algebras to prove Theorem \ref{gtame} and Corollary \ref{connect}. For a quiver with potential $(Q,W)$, we have the associated Jacobian algebra $J(Q,W)$, Ginzburg differential graded algebra $\Gamma_{Q,W}$, and generalized cluster category $\cC_{Q,W}$ (see e.g. \cite{A,DWZ,G,K08,K11} for details). The following is the main result in the additive categorification of cluster algebras.

\begin{theorem}\label{CC}
 Let $Q$ be a quiver without loops and $2$-cycles and $W$ a non-degenerate potential of $Q$ such that $J(Q,W)$ is finite dimensional.
\begin{itemize}
 \item[(1)] \cite[Theorem 2.1]{A} The category $\cC_{Q,W}$ is a Hom-finite Krull-Schmidt $2$-Calabi-Yau triangulated category with a cluster tilting object $\Gamma_{Q,W}$.
 \item[(2)]\cite[Theorem 6.3]{FK}\cite[Corollary 3.5]{CKLP}
 There is a bijection
\[
 \X : \rigid^+\cC_{Q,W} \longleftrightarrow \clv Q
\]
 such that
\[
 g_{\Gamma_{Q,W}}(X)=g_Q(\X(X))
\]
 for $X \in \rigid^+\cC_{Q,W}$. Moreover, it induces a bijection
\[
 \X : \ctilt^+\cC_{Q,W} \longleftrightarrow \clus Q
\]
which sends $\Gamma_{Q,W}$ to the initial cluster in $\cA(Q)$ and commutes with mutations.
\end{itemize}
\end{theorem}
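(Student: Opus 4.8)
The plan is to prove the two parts separately, since (1) is the existence and internal structure of the generalized cluster category while (2) is its categorification of the cluster algebra $\cA(Q)$. For (1) I would recall the dg-algebraic construction: by Ginzburg \cite{G} the dg algebra $\Gamma=\Gamma_{Q,W}$ is homologically smooth, has cohomology concentrated in non-positive degrees with $H^0(\Gamma)=J(Q,W)$, and is bimodule $3$-Calabi--Yau; following Amiot \cite{A} (see also \cite{K11}) one sets $\cC_{Q,W}:=\per\Gamma/\Dfd(\Gamma)$. The hypothesis that $J(Q,W)$ is finite dimensional forces $\Dfd(\Gamma)\subseteq\per\Gamma$, and a truncation argument for the standard $t$-structure on $\per\Gamma$ shows the $\Hom$-spaces in the quotient are finite dimensional; Krull--Schmidt follows from $\Hom$-finiteness over the field together with idempotent completeness. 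The length-$3$ bimodule resolution of $\Gamma$ descends to a functorial isomorphism $\Hom_{\cC}(X,Y[1])\simeq D\Hom_{\cC}(Y,X[1])$ with $D$ the $k$-dual, which is the $2$-Calabi--Yau property. Finally one checks $\Hom_{\cC}(\Gamma,\Gamma[1])=0$ and $\thick(\Gamma)=\cC_{Q,W}$, so $\Gamma$ is cluster tilting, with $\add\Gamma\simeq\proj J(Q,W)$ and $\End_{\cC}(\Gamma)^{\rm op}\simeq J(Q,W)$; in particular $K_0(\add\Gamma)\simeq\bZ^n$ identifies the $\bZ^n$-grading of Subsection~\ref{clalg} with the index relative to $\Gamma$.

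For (2) I would introduce a cluster character $\X$ on $\cC_{Q,W}$ relative to $\Gamma$, built from the index $\mathrm{ind}_\Gamma$ and Euler characteristics of Grassmannians of submodules, following \cite{FK,CKLP}. Two properties have to be established. First, for a rigid $X$ the lowest-degree Laurent monomial of $\X(X)$ is recorded exactly by $\mathrm{ind}_\Gamma(X)$, so that $g_{\Gamma_{Q,W}}(X)=g_Q(\X(X))$ once $\X(X)$ is known to be a cluster variable. Second, the exchange triangles governing Iyama--Yoshino mutation in $\ctilt\cC_{Q,W}$ are transported by $\X$ to the exchange relations of seed mutation; this is the multiplication formula for the cluster character, combined with the matching of $\Hom_{\cC}(-,-[1])$-dimension vectors with the exponents appearing in the mutation rule. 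Starting from $\X(\Gamma)=(x_1,\dots,x_n)$ and iterating along all sequences of mutations, these give a bijection $\X:\ctilt^+\cC_{Q,W}\longleftrightarrow\clus Q$ that commutes with mutation, and restricting to indecomposable direct summands yields $\X:\rigid^+\cC_{Q,W}\longleftrightarrow\clv Q$ together with the stated $g$-vector identity; injectivity on $\ctilt^+\cC_{Q,W}$ uses that the $g$-vector cones of distinct reachable cluster tilting objects form a fan \cite{DIJ}, so the $g$-vectors determine the object.

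The hardest step is showing that $\X$ actually takes values in $\cA(Q)$ and is surjective onto $\clv Q$, i.e. that every cluster variable arises from a reachable rigid object, with the $F$-polynomial recursion on the cluster-algebra side matched termwise to the Grassmannian Euler characteristics on the categorical side. This is precisely where the substantive input of \cite{FK,CKLP} lies, together with sign-coherence of $g$-vectors; by comparison, the structure statement (1) and the mutation compatibility are comparatively formal once the dg framework is in place.
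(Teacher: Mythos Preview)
The paper does not give its own proof of this theorem: it is quoted verbatim from the literature, with part (1) attributed to Amiot \cite{A} and part (2) to Fu--Keller \cite{FK} and Cerulli Irelli--Keller--Labardini-Fragoso--Plamondon \cite{CKLP}, and is then used as a black box in Corollary~\ref{CC'} and the proof of Theorem~\ref{gtame}. So there is nothing in the paper to compare your proposal against; what you have written is a reasonable outline of how the cited references establish the result.

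Two small imprecisions in your sketch, should you want to tighten it. First, in (1) you assert that $\Hom_{\cC}(\Gamma,\Gamma[1])=0$ and $\thick(\Gamma)=\cC_{Q,W}$ give that $\Gamma$ is cluster tilting; but rigidity plus thick generation is not the definition used here (one needs $\add\Gamma=\{X\in\cC\mid\Hom_{\cC}(\Gamma,X[1])=0\}$, cf.\ Subsection~4.1), and Amiot's argument instead passes through a fundamental domain for the shift on $\per\Gamma$ built from the canonical $t$-structure. Second, invoking \cite{DIJ} for injectivity in (2) is anachronistic relative to \cite{FK,CKLP}; the original route is that indices (equivalently $g$-vectors) determine rigid objects, which is \cite{DK}.
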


 Note that the map $\X$ in Theorem \ref{CC}(2) is called the {\it cluster character} associated with $(Q,W)$ (see e.g. \cite{BY,CC,Pa,Pl11a,Pl11b}).

 We also study $\rigid^-\cC_{Q,W}$ and $\ctilt^-\cC_{Q,W}$. We have
\[
\End_{\cC_{Q,W}}(\Gamma_{Q,W})^{\rm op} \simeq J(Q,W)\ \text{and}\ J(Q,W)^{\rm op} \simeq J(Q^{\rm op},W^{\rm op}),
\]
 where $Q^{\rm op}$ is the opposite quiver of $Q$ and $W^{\rm op}$ is a non-degenerate potential of $Q^{\rm op}$ corresponding to $W$.

\begin{corollary}\label{CC'}
 Let $Q$ be a quiver without loops and $2$-cycles and $W$ a non-degenerate potential of $Q$ such that $J(Q,W)$ is finite dimensional. Then there is a bijection
\[
 \X' : \rigid^-\cC_{Q,W} \longleftrightarrow \clv Q^{\rm op}
\]
 such that
\[
 g_{\Gamma_{Q,W}}(X)=-g_{Q^{\rm op}}(\X'(X))
\]
 for $X \in \rigid^-\cC_{Q,W}$. Moreover, it induces a bijection
\[
 \X' : \ctilt^-\cC_{Q,W} \longleftrightarrow \clus Q^{\rm op}
\]
which sends $\Gamma_{Q,W}[1]$ to the initial cluster in $\cA(Q^{\rm op})$ and commutes with mutations.
\end{corollary}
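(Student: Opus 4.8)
The plan is to build $\X'$ by composing bijections already at hand, routing $\rigid^-\cC_{Q,W}$ through the module side of $\tau$-tilting theory, across the duality $\varphi$ of Theorem \ref{varphi}, and back to the cluster algebra of $Q^{\rm op}$ via the categorification theorems applied to the opposite data $(Q^{\rm op},W^{\rm op})$. Write $J=J(Q,W)$, so that $J^{\rm op}\simeq J(Q^{\rm op},W^{\rm op})$ is again finite dimensional and the hypotheses of Theorems \ref{CC} and \ref{sfE} hold for $(Q^{\rm op},W^{\rm op})$ as well. By Theorem \ref{sfE} with $U=\Gamma_{Q,W}$ (and the remark on the $\pm$ classes following it), $\sH=\Hom_{\cC_{Q,W}}(\Gamma_{Q,W},-)$ restricts to a bijection $\rigid^-\cC_{Q,W}\leftrightarrow\trigid^- J$ with $g_{\Gamma_{Q,W}}(X)=g_J(\sH(X))$, and one has $\sH(\Gamma_{Q,W}[1])=(0,J)$ (shifts of indecomposable summands of $\Gamma_{Q,W}$ go to pairs $(0,P)$ with $P$ an indecomposable projective $J$-module). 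Next, Theorem \ref{varphi} gives $\varphi:\trigid J\leftrightarrow\trigid J^{\rm op}$ with $g_J(M,P)=-g_{J^{\rm op}}(\varphi(M,P))$; since its defining formula yields $\varphi(0,J)=(0\oplus J^{\ast},0)=(J^{\rm op},0)=(J(Q^{\rm op},W^{\rm op}),0)$ and $\varphi$ commutes with mutations, $\varphi$ restricts to bijections $\trigid^- J\leftrightarrow\trigid^+ J^{\rm op}$ and $\sttilt^- J\leftrightarrow\sttilt^+ J^{\rm op}$. Finally, applying Theorem \ref{sfE} to $\cC_{Q^{\rm op},W^{\rm op}}$ with $U=\Gamma_{Q^{\rm op},W^{\rm op}}$ and Theorem \ref{CC}(2) to $(Q^{\rm op},W^{\rm op})$ produces $g$-vector preserving bijections $\trigid^+ J^{\rm op}\xleftarrow{\ \sH_{\rm op}\ }\rigid^+\cC_{Q^{\rm op},W^{\rm op}}\xrightarrow{\ \X_{\rm op}\ }\clv Q^{\rm op}$.

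Setting $\X':=\X_{\rm op}\circ\sH_{\rm op}^{-1}\circ\varphi\circ\sH$, I would then read off the two required properties. For $X\in\rigid^-\cC_{Q,W}$ the chain
\[
 g_{\Gamma_{Q,W}}(X)=g_J(\sH(X))=-g_{J^{\rm op}}(\varphi\sH(X))=-g_{\Gamma_{Q^{\rm op},W^{\rm op}}}(\sH_{\rm op}^{-1}\varphi\sH(X))=-g_{Q^{\rm op}}(\X'(X))
\]
gives the $g$-vector identity. Each of the four constituent maps restricts to a bijection between the ``$-$'' and ``$+$'' cluster-tilting (resp.\ $\tau$-tilting) classes and commutes with mutations, so $\X'$ induces a bijection $\ctilt^-\cC_{Q,W}\leftrightarrow\clus Q^{\rm op}$ commuting with mutations; and tracking the distinguished objects,
\[
 \Gamma_{Q,W}[1]\ \overset{\sH}{\longmapsto}\ (0,J)\ \overset{\varphi}{\longmapsto}\ (J(Q^{\rm op},W^{\rm op}),0)\ \overset{\sH_{\rm op}^{-1}}{\longmapsto}\ \Gamma_{Q^{\rm op},W^{\rm op}}\ \overset{\X_{\rm op}}{\longmapsto}\ \text{the initial cluster of }\cA(Q^{\rm op}),
\]
which is the remaining assertion.

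The point I would write out most carefully is the compatibility of the Grothendieck-group identifications entering the signed equality above: the isomorphism $K_0(\proj J)\simeq K_0(\proj J^{\rm op})$ induced by $(-)^{\ast}$ in Theorem \ref{varphi}, the isomorphisms $K_0(\add\Gamma_{Q,W})\simeq K_0(\proj J)$ and $K_0(\add\Gamma_{Q^{\rm op},W^{\rm op}})\simeq K_0(\proj J^{\rm op})$ from Theorem \ref{sfE}, and the identification $K_0(\proj J(Q^{\rm op},W^{\rm op}))\simeq\bZ^n$ defining $g_{Q^{\rm op}}$, all being taken compatibly with the common indexing by the vertices $1,\dots,n$ of $Q$ (equivalently of $Q^{\rm op}$), so that the single sign flip coming from $\varphi$ is the only discrepancy. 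The one genuinely non-formal input is that $\varphi$ exchanges the ``$+$'' and ``$-$'' components, which rests only on the explicit value $\varphi(0,J)=(J^{\rm op},0)$ together with $\varphi$ commuting with mutations; beyond this bookkeeping, everything is a formal composition of the cited bijections, so I expect no serious obstacle.
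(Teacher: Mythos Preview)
Your proof is correct and follows essentially the same approach as the paper: both define $\X'$ as the composite $\X\circ\sH^{-1}\circ\varphi\circ\sH$ through $\trigid^- J\to\trigid^+ J^{\rm op}\to\rigid^+\cC_{Q^{\rm op},W^{\rm op}}\to\clv Q^{\rm op}$, and both read off the $g$-vector identity from the same chain of equalities. Your version is more explicit in tracking the distinguished objects and in flagging the compatibility of the various Grothendieck-group identifications, but the underlying argument is identical.
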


\begin{proof}
 Let $\X'$ be the following composition:
{\setlength\arraycolsep{0.5mm}
\begin{eqnarray*}
 \rigid^-\cC_{Q,W} &\overset\sH{\longrightarrow}& \trigid^-\End_{\cC_{Q,W}}(\Gamma_{Q,W})^{\rm op} \longleftrightarrow \trigid^-J(Q,W)\\
 &\overset{\varphi}{\longrightarrow}& \trigid^+J(Q,W)^{\rm op} \longleftrightarrow \trigid^+J(Q^{\rm op},W^{\rm op})\\
 &\overset{\sH^{-1}}{\longrightarrow}& \rigid^+\cC_{Q^{\rm op},W^{\rm op}}\\
 &\overset{\X}{\longrightarrow}& \clv Q^{\rm op}.
\end{eqnarray*}}By Theorems \ref{varphi}, \ref{sfE} and \ref{CC}, it induces a bijection between $\ctilt^-\cC_{Q,W}$ and $\clus Q^{\rm op}$ which sends $\Gamma_{Q,W}[1]$ to the initial cluster in $\cA(Q^{\rm op})$ and commutes with mutations. Moreover, we have the equalities
\[
 g_{\Gamma_{Q,W}}(X) = g_{J(Q,W)}(\sH(X)) = -g_{J(Q,W)^{\rm op}}(\varphi\sH(X)) = -g_{\Gamma_{Q^{\rm op},W^{\rm op}}}(\sH^{-1}\varphi\sH(X)) = -g_{Q^{\rm op}}(\X'(X))
\]
 for $X \in \rigid^-\cC_{Q,W}$.
\end{proof}

 For a tagged triangulation $T$ of $(S,M)$, we consider a non-degenerate potential $W$ of $Q_T$ such that $J(Q_T,W)$ is finite dimensional. It is known that such a potential $W$ exists.

\begin{proposition}\label{existW}
 Let $T$ be a tagged triangulation of $(S,M)$. Then there is a non-degenerate potential $W$ of $Q_T$ such that $J(Q_T,W)$ is finite dimensional.
\end{proposition}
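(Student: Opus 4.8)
The plan is to reduce the statement to known existence results for non-degenerate potentials on quivers arising from triangulated surfaces, distinguishing the cases according to whether $(S,M)$ has punctures. First I would recall that for a quiver $Q_T$ associated with a tagged triangulation $T$, Labardini-Fragoso constructed an explicit potential $W_T$ attached to $T$; when $(S,M)$ has non-empty boundary this potential is already non-degenerate, and when there are punctures a suitable modification (involving the cycles around punctures, with generic coefficients) is needed. The key input I would invoke is the theorem of Labardini-Fragoso that the Jacobian algebra $J(Q_T,W_T)$ is finite dimensional whenever $(S,M)$ is not a closed surface with exactly one puncture, together with the subsequent work handling the once-punctured closed case via a deformed potential with generic scalars (the non-degeneracy being automatic once finite-dimensionality holds, by the general theory of Derksen--Weyman--Zelevinsky since mutations of $(Q_T,W_T)$ are again of the form $(Q_{T'},W_{T'})$ up to right-equivalence).

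The steps in order: (1) If $\partial S \neq \emptyset$, take $W = W_T$ the Labardini-Fragoso potential and cite that $J(Q_T,W_T)$ is finite dimensional and $W_T$ is non-degenerate, the latter because $\mu_k(Q_T,W_T)$ is right-equivalent to $(Q_{T'},W_{T'})$ for the flipped triangulation $T'$ (compatibility of flips and QP-mutations), so no $2$-cycles ever appear. (2) If $(S,M)$ has punctures but is not a closed surface with exactly one puncture, the same potential $W_T$ works, again by Labardini-Fragoso's finiteness theorem together with the flip--mutation compatibility which gives non-degeneracy. (3) If $(S,M)$ is a closed surface with exactly one puncture, $W_T$ itself gives an infinite-dimensional Jacobian algebra, so instead take the deformed potential obtained by scaling the puncture-cycle term by a generic scalar $\lambda$; cite the result (Labardini-Fragoso, and Geiss--Labardini-Fragoso--Schröer in the orbifold generalization) that for generic $\lambda$ the resulting Jacobian algebra is finite dimensional, and that non-degeneracy persists since flips still correspond to QP-mutations.

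I expect the main obstacle to be the once-punctured closed surface case: here one cannot simply quote a uniform statement, and one must invoke the finer analysis showing that a generic choice of scalar on the puncture cycle produces a finite-dimensional Jacobian algebra (this is genuinely delicate and relies on a separate argument in the literature rather than a formal reduction). The other cases are essentially bookkeeping once the flip--mutation compatibility of $(Q_T,W_T)$ is in hand, which guarantees non-degeneracy; only finite-dimensionality requires the external input, and for surfaces with non-empty boundary or with at least two punctures (or positive genus with one puncture only in the non-closed case) it is available directly. So the proof is really a citation-assembly argument: state $W$ case by case, cite finite-dimensionality from Labardini-Fragoso's work, and deduce non-degeneracy from compatibility of flips with mutations of quivers with potential.
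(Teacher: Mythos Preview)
Your overall strategy---assemble the result from existing literature on Labardini-Fragoso potentials, using flip--mutation compatibility for non-degeneracy and separate citations for finite-dimensionality---matches the paper's approach. However, your case decomposition is off, and this creates genuine gaps.

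You single out the once-punctured closed surface as the place where finite-dimensionality fails for the Labardini-Fragoso potential and propose a generic deformation there. But the paper does not treat this case specially: it cites Ladkani \cite{Lad12} for finite-dimensionality of $J(Q_T,W)$ uniformly for all closed surfaces (and \cite{TV} independently for spheres), with \cite{Lab09a} covering the non-empty boundary case. Labardini-Fragoso's potential for closed surfaces already incorporates the puncture-cycle terms, so there is no separate ``deformation step'' needed for once-punctured closed surfaces.

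Conversely, you miss the actual exceptional cases the paper isolates. For the sphere with exactly four punctures, the potential is taken from Gei\ss--Gonz\'alez-Silva \cite{GG} rather than Labardini-Fragoso. For the sphere with exactly five punctures, Labardini-Fragoso's potential is defined, but its non-degeneracy was only established later in \cite{GLS}. Your blanket claim that non-degeneracy follows from flip--mutation compatibility elides the fact that this compatibility was proved by Labardini-Fragoso only away from these small sphere cases; the four- and five-punctured spheres required separate arguments in the literature. So while the architecture of your proof is correct, the list of references you would need to cite, and the cases they cover, differs from what you wrote.
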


\begin{proof}
 For a sphere $(S,M)$ with exactly four punctures, such a potential $W$ was given in \cite{GG} (see also \cite{GLS}). Suppose that $(S,M)$ is not a sphere with exactly four punctures. Labardini-Fragoso \cite{Lab09a,Lab16} defined a potential $W$ of $Q_T$ for any tagged triangulation $T$ of $(S,M)$, and showed that it is non-degenerate except for a sphere with exactly five punctures, which in this case was proved in \cite{GLS}. Finite dimensionally of $J(Q_T,W)$ was proved in \cite{Lab09a} for $(S,M)$ with non-empty boundary and in \cite{Lad12} for $(S,M)$ with empty boundary, where it was proved independently in \cite{TV} for spheres.
\end{proof}

\subsection{Proofs of Theorem \ref{gtame} and Corollary \ref{connect}}

 We keep the notations in the previous subsection. Let $\Gamma, U=\bigoplus_{i=1}^n U_i \in \ctilt \cC$ and $N=\bigoplus_{i=1}^n N_i \in\sttilt\Lambda$, where $U_i$ and $N_i$ are indecomposable. We define {\it $g$-vector cones}
\[
 C_{\Gamma}(U) := \biggl\{\sum_{i=1}^{n}a_ig_{\Gamma}(U_i) \mid a_i \in \bR_{\ge0}\biggr\}\ \ \text{and}\ \  C_{\Lambda}(N) := \biggl\{\sum_{i=1}^{n}a_ig_{\Lambda}(N_i) \mid a_i \in \bR_{\ge0}\biggr\}.
\]

\begin{proof}[Proof of Theorem \ref{gtame}]
 Let $T$ be a tagged triangulation of $(S,M)$ and $W$ a non-degenerate potential of $Q=Q_T$ such that $J(Q,W)$ is finite dimensional. By Theorem \ref{sfE}, we have
\[
 C_{\Gamma_{Q,W}}(U) = C_{J(Q,W)}(\sH(U))
\]
 for $U \in \ctilt \cC_{Q,W}$. Therefore, we only need to prove the assertion for $\cC_{Q,W}$. By Theorem \ref{CC}(2) and Corollary \ref{CC'}, the equalities
\[
 \bigcup_{U \in \ctilt^+ \cC_{Q,W}}C_{\Gamma_{Q,W}}(U) = \bigcup_{\x \in \clus Q_T}C_{Q_T}(\x)\ \ \text{and}\ \ \bigcup_{U \in \ctilt^- \cC_{Q,W}}C_{\Gamma_{Q,W}}(U) = \bigcup_{\x \in \clus Q_T^{\rm op}}\bigl(-C_{Q_T^{\rm op}}(\x)\bigr).
\]
 hold. Thus the assertion follows from Theorem \ref{gfan}.
\end{proof}

\begin{proof}[Proof of Corollary \ref{connect}]
 A $g$-vector cone $C_{\Gamma_{Q,W}}(U)$ has dimension $n$ for any $U \in \ctilt\cC_{Q,W}$ \cite[Theorem 2.4]{DK}. For $U \ncong V \in \ctilt\cC_{Q,W}$, $C_{\Gamma_{Q,W}}(U)$ and $C_{\Gamma_{Q,W}}(V)$ have no intersections except for their boundaries \cite[Corollary 6.7]{DIJ}. Thus there are no cluster tilting objects in $\ctilt\cC_{Q,W} \setminus \ctilt^{\pm}\cC_{Q,W}$ by Theorem \ref{gtame}. The assertion follows from Theorem \ref{flip}.
\end{proof}

\subsection{Example for representation theory}

 For the tagged triangulation $T$ in Subsection \ref{Kro}, the quiver $Q_T$ is the Kronecker quiver $1 \leftleftarrows 2$. The set $\sttilt J(Q_T,0)$ is as follows:
\[
\xymatrix@C=7mm@R=5mm{
 \Bigl(1\oplus{\renewcommand{\arraystretch}{0.7}\hspace{-1mm}\begin{array}{c} 2\\ 1\ 1 \end{array}\hspace{-1mm}},0\Bigr) \ar@{-}[r] \ar@{-}[d] &
 \Bigl({\renewcommand{\arraystretch}{0.7}\hspace{-1mm}\begin{array}{c} 2\ 2\\ 1\ 1\ 1 \end{array}\hspace{-1mm}\oplus\hspace{-1mm}\begin{array}{c} 2\\ 1\ 1 \end{array}\hspace{-1mm}},0\Bigr) \ar@{-}[r] &
 \Bigl({\renewcommand{\arraystretch}{0.7}\hspace{-1mm}\begin{array}{c} 2\ 2\\ 1\ 1\ 1 \end{array}\hspace{-1mm}\oplus\hspace{-1mm}\begin{array}{c} 2\ 2\ 2\\ 1\ 1\ 1\ 1 \end{array}\hspace{-1mm}},0\Bigr) \ar@{-}[r] & \cdots
\\
 \Bigl(1,{\renewcommand{\arraystretch}{0.7}\hspace{-1mm}\begin{array}{c} 2\\ 1\ 1 \end{array}\hspace{-1mm}}\Bigr) \ar@{-}[r] &
 \Bigl(0,1\oplus{\renewcommand{\arraystretch}{0.7}\hspace{-1mm}\begin{array}{c} 2\\ 1\ 1 \end{array}\hspace{-1mm}}\Bigr) \ar@{-}[r] &
 \bigl(2,1\bigr) \ar@{-}[r] & \cdots
}
\]
 The corresponding $g$-vectors
\[
\xymatrix@C=12mm@R=5mm{
 (1,0), (0,1) \ar@{-}[r] \ar@{-}[d] & (-1,2), (0,1) \ar@{-}[r] & (-1,2), (-2,3) \ar@{-}[r] & \cdots
\\
 (1,0), (0,-1) \ar@{-}[r] & (-1,0), (0,-1) \ar@{-}[r] & (-1,0), (-2,1) \ar@{-}[r] & \cdots
}
\]
coincide with the $g$-vectors of the corresponding cluster variables as in Subsection \ref{excl}. 


\end{document}